\documentclass[10pt, a4paper, reqno, english]{amsart}
\pdfoutput=1
\usepackage{amsfonts, amsthm, amsmath, amssymb, mathrsfs}

\usepackage[utf8]{inputenc}
\usepackage{lmodern}

\usepackage[margin=3.5cm]{geometry}
\usepackage{enumitem}
\usepackage[draft=false,hidelinks]{hyperref}
\usepackage{graphics}

\usepackage{tikz}
\usetikzlibrary{shapes.geometric}

\theoremstyle{plain}
\newtheorem{theorem}{Theorem}
\newtheorem{prop}[theorem]{Proposition}
\newtheorem{lemma}[theorem]{Lemma}
\theoremstyle{remark}
\newtheorem{remark}[theorem]{Remark}

\newcommand\ee{\boldsymbol{\eta}}
\newcommand\eei{\ee^{(i)}}
\newcommand\e{\eta}
\newcommand\eps{\varepsilon}

\newcommand{\FF}{\mathbb{F}}
\newcommand{\PP}{\mathbb{P}}
\newcommand{\QQ}{\mathbb{Q}}
\newcommand{\VV}{\mathbb{V}}
\newcommand{\Qbar}{\overline{\QQ}}
\newcommand{\RR}{\mathbb{R}}
\newcommand{\AAA}{\mathbb{A}}

\newcommand{\ZZ}{\mathbb{Z}}
\newcommand{\RRnn}{\RR_{\ge{}0}}

\newcommand{\GGmZ}{\mathbb{G}_{\mathrm{m},\ZZ}} 

\newcommand\Os{\mathcal{O}}
\newcommand\Us{\mathcal{U}}
\newcommand\Ss{\mathcal{S}}
\newcommand\Zs{\mathcal{Z}}
\newcommand\Hs{\mathcal{H}}
\newcommand\Ys{\mathcal{Y}}
\newcommand\Ls{\mathcal{L}}
\newcommand{\Can}{\mathcal{C}^{\mathrm{an}}}

\newcommand\tU{\widetilde{U}}
\newcommand\tV{\widetilde{V}}
\newcommand\tS{\widetilde{S}}
\newcommand\tH{\widetilde{H}}

\newcommand\tSs{\widetilde{\Ss}}
\newcommand\tUs{\widetilde{\mathcal{U}}}

\newcommand{\Ab}{\mathbf{A}}
\newcommand{\ADE}{\mathbf{ADE}}

\DeclareMathOperator{\vol}{vol}
\DeclareMathOperator{\Pic}{Pic}
\DeclareMathOperator{\Eff}{Eff}
\DeclareMathOperator{\Spec}{Spec}
\DeclareMathOperator{\rk}{rk}

\newcommand{\fin}{\mathrm{fin}}
\newcommand{\irr}{\mathrm{irr}}

\newcommand{\abs}[1]{\left\lvert#1\right\rvert}
\newcommand{\norm}[1]{\left\lVert#1\right\rVert}
\newcommand\bigwhere[2]{\left\{#1\
    \left|\ \begin{aligned}#2\end{aligned}\right.\right\}}
\newcommand\where[2]{\{#1 \mid{} #2\}}
\newcommand*\diff{\mathop{}\!\mathrm{d}}

\begin{document}

\title{Integral points on singular del Pezzo surfaces}

\author{Ulrich Derenthal}
\email{derenthal@math.uni-hannover.de}
\address{Institut f\"ur Algebra, Zahlentheorie und Diskrete
  Mathematik, Leibniz Universit\"at Hannover,
  Welfengarten 1, 30167 Hannover, Germany}

\author{Florian Wilsch}
\email{florian.wilsch@ist.ac.at}
\address{Institute of Science and Technology Austria,
  Am Campus 1, 3400 Klosterneuburg, Austria}

\date{November 10, 2022}

\keywords{Integral points, del Pezzo surface, universal torsor, Manin's conjecture}
\subjclass[2020]{11D45 (11G35, 14G05, 14J26)}

\setcounter{tocdepth}{1}

\begin{abstract}
  In order to study integral points of bounded log-anticanonical height on
  weak del Pezzo surfaces, we classify weak del Pezzo pairs.  As a
  representative example, we consider a quartic del Pezzo surface of
  singularity type $\Ab_1+\Ab_3$ and prove an analogue of Manin's conjecture
  for integral points with respect to its singularities and its lines.
\end{abstract}

\maketitle

\tableofcontents

\section{Introduction}

Del Pezzo surfaces over $\QQ$ often contain infinitely many rational points.
Over the past 20 years, Manin's conjecture~\cite{MR974910,MR2019019} for the
asymptotic behavior of the number of rational points of bounded anticanonical
height has been confirmed for some smooth and many singular del Pezzo surfaces
(see~\cite{MR1909606,MR2332351,MR2874644} for some milestones
and~\cite[\S~6.4.1]{MR3307753} for many further references), in most cases
using universal torsors, often combined with advanced analytic techniques.

In recent years, a conjectural framework for the density of \emph{integral}
points has emerged in the work of Chambert-Loir and Tschinkel~\cite{MR2740045}.
The purpose of this paper is to
initiate a systematic investigation of integral points of bounded height on
del Pezzo surfaces.
Only a few of them are covered by general results for equivariant compactifications of vector
groups~\cite{MR2999313} or the incomplete work on toric
varieties~\cite{arXiv:1006.3345v2} (see also~\cite{wilsch-toric}); most del Pezzo surfaces are out of reach of this
\emph{harmonic analyis approach} since they are not equivariant compactifications of
algebraic groups~\cite{MR2753646,MR3333982}. Del Pezzo surfaces are
inaccessible to the \emph{circle method}, which gives asymptotic formulas for
integral points only on high-dimensional complete
intersections~\cite{MR0150129},~\cite[\S~5.4]{MR2740045}. Therefore, we adapt
the \emph{universal torsor method} to integral points in order to confirm new cases
of an integral analogue of Manin's conjecture.
See also~\cite{arXiv:1901.08503} for a
three-dimensional example.

As rational and integral points coincide on a projective variety $X$, the
study of the latter becomes interesting on its own on an integral model of the
complement $X\setminus Z$ of an appropriate \emph{boundary} $Z$. Our first
result (Theorem~\ref{thm:classification} in Section~\ref{sec:classification})
is a general treatment of possible boundaries on singular del Pezzo surfaces
of low degree. For singular cubic surfaces, $Z$ must be an
$\mathbf{A}$-singularity; for singular quartic del Pezzo surfaces, $Z$ must be
an $\mathbf{A}$-singularity or a line passing only through
$\mathbf{A}$-singularities. Furthermore, $\Ab_1$-singularities behave
differently than other $\mathbf{A}$-singularities.

Therefore, a good starting point seems to be a
quartic del Pezzo surface that contains an $\Ab_1$- and an
$\Ab_3$-singularity and three lines, which is neither toric \cite[Remark~6]{MR3180592} nor a compactification of $\mathbb{G}_\mathrm{a}^2$~\cite{MR2753646}.
For each boundary $Z$ admissible in the sense of Theorem~\ref{thm:classification},
we get an associated counting problem and prove an asymptotic formula of the shape
\[
  c B (\log B)^{b-1}
\]
(Theorem~\ref{thm:main_concrete}), encountering a range of different phenomena
when dealing with the different types of boundary. These asymptotic formulas
admit a geometric interpretation (Theorem~\ref{thm:main_abstract}). In
particular, the leading constant $c$ consists of Tamagawa numbers as defined
in~\cite{MR2740045} and combinatorial constants (analogous to the constant
$\alpha$ defined by Peyre for rational points) as defined
in~\cite{arXiv:1006.3345v2} for toric varieties and studied in greater
generality in~\cite{wilsch-toric}; this is the first result applying this
combinatorial construction in a nontoric setting.

\subsection{The counting problem}

Let $S \subset \PP^4_\QQ$ be the quartic del
Pezzo surface  defined by
\begin{equation}\label{eq:def_eq}
  x_0^2+x_0x_3+x_2x_4=x_1x_3-x_2^2=0
\end{equation}
over $\QQ$, with an $\Ab_1$-singularity $Q_1=(0:1:0:0:0)$ and an
$\Ab_3$-singularity $Q_2=(0:0:0:0:1)$.
Let $\Ss \subset \PP^4_\ZZ$ be its integral model defined by the same
equations over $\ZZ$.

The closure of every rational point $P \in S(\QQ)$ is an integral point
$\overline{P} \in \Ss(\ZZ)$; both are represented (uniquely up to sign) by
coprime
$(x_0,\dots,x_4) \in \ZZ^5 \setminus \{0\}$ satisfying the defining
equations~\eqref{eq:def_eq}.
Recall that studying integral points becomes interesting only when we
choose a \emph{boundary} $\Zs$ to
consider integral points on $\Ss \setminus \Zs$, and that the types of
boundaries in Theorem~\ref{thm:classification} for our case are the
singularities and the lines; we start with the former.
To do so, let $Z_1=Q_1$, $Z_2=Q_2$; in addition to these, we study the boundary
$Z_3=Q_1\cup Q_2$, which goes beyond the setting of weak del Pezzo pairs described in the beginning of
the following section. Let $\Zs_i=\overline{Z_i}$,
and $\Us_i = \Ss \setminus \Zs_i$. Hence, $\overline{P}$ lies in
$\Us_3(\ZZ)$, say, if and only if it is does not reduce to one of the singularities
modulo any prime $p$. In other words, a representative $(x_0,\dots,x_4)$ of a
point in $\Us_i(\ZZ)$ satisfies the \emph{integrality condition}
\begin{equation}\label{eq:gcd}
  \begin{aligned}
    \gcd(x_0,x_2,x_3,x_4)=1,\qquad &\text{if $i=1$},\\
    \gcd(x_0,x_1,x_2,x_3)=1,\qquad &\text{if $i=2$, or}\\
    \gcd(x_0,x_2,x_3,x_4)=1\quad\text{and}\quad \gcd(x_0,x_1,x_2,x_3)=1,\qquad &\text{if $i=3$}.
  \end{aligned}
\end{equation}

Since the sets $\Us_i(\ZZ)$ of integral points are clearly infinite,
we consider integral points of bounded height. We work with the
\emph{height functions}
\begin{equation}\label{eq:height}
  \begin{aligned}
    H_1(\overline{P})
    &=\max\{\abs{x_0},\abs{x_2},\abs{x_3},\abs{x_4}\},\\
    H_2(\overline{P})
    &=\max\{\abs{x_0},\abs{x_1},\abs{x_2},\abs{x_3}\},\qquad \text{and}\\
    H_3(\overline{P})
    &=\max\{\abs{x_0},\abs{x_2},\abs{x_3}, \min\{\abs{x_1},\abs{x_4}\}\},
  \end{aligned}
\end{equation}
because they can be interpreted as \emph{log-anticanonical heights} 
on a minimal desingularization, as we shall see below (Lemma~\ref{lem:heights}).

It turns out that the number of integral points of bounded height is
dominated by the integral points on the three lines
\begin{equation}\label{eq:lines}
  L_1=\{x_0=x_2=x_3=0\},\ 
  L_2=\{x_0=x_1=x_2=0\},\ 
  L_3=\{x_0+x_3=x_1=x_2=0\};
\end{equation}
in fact, there are infinitely many integral points of height $1$ on some of
them.
Therefore, we count integral points only in their complement
$V = S \setminus \{x_2=0\}$.
Hence, we are interested in the asymptotic behavior of
\begin{equation}\label{eq:def-N_i}
  N_i(B) = \# \{\overline{P} \in \Us_i(\ZZ) \cap V(\QQ) \mid H_i(\overline{P}) \le B\},
\end{equation}
the number of integral points of bounded log-anticanonical height
that are not contained in the lines.  Explicitly, this is
\begin{equation}\label{eq:description-N_i}
  N_i(B) = \# \{(x_0,\dots,x_4) \in \ZZ^5 \setminus \{0\} \mid
  \eqref{eq:def_eq},\ \eqref{eq:gcd},\ x_2 > 0,\ H_i(x_0:\dots:x_4)
  \le B\}.
\end{equation}

Recall that the second type of boundary is a line, resulting in
$Z_4=L_1,Z_5 = L_2, Z_6 = L_3$ with the notation in~\eqref{eq:lines}.
Let $\Zs_i=\overline{Z_i}$ in $\Ss$, and $\Us_i=\Ss \setminus \Zs_i$ for
$i=4,5,6$.
Analogously to the first three cases, a point $(x_0:\dots:x_4)\in S$ with coprime $x_0,\dots,x_4 \in \ZZ$
lies in $\Us_i(\ZZ)$ if and only if
\begin{equation}\label{eq:gcd-lines}
  \begin{aligned}
    \gcd(x_0,x_2,x_3)=1, \qquad &\text{if $i=4$},\\
    \gcd(x_0,x_1,x_2)=1, \qquad &\text{if $i=5$, or}\\
    \gcd(x_0+x_3,x_1,x_2)=1, \qquad &\text{if $i=6$}.
  \end{aligned}
\end{equation}
We work with the heights
\begin{align*}
  H_4(\overline{P}) &= \max\{|x_0|,|x_2|,|x_3|\},\\      
  H_5(\overline{P}) &= \max\{|x_0|,|x_1|,|x_2|\},\qquad \text{and}\\
  H_6(\overline{P}) &= \max\{|x_0+x_3|,|x_1|,|x_2|\},
\end{align*}
which will again turn out to be log-anticanonical on a minimal desingularization.
Let $N_i(B)$ for $i=4,5,6$ be defined as in~\eqref{eq:def-N_i}. They satisfy
descriptions as in~\eqref{eq:description-N_i}, with the integrality
condition~\eqref{eq:gcd} replaced by~\eqref{eq:gcd-lines}.

Our second result consists of asymptotic formulas for these counting problems:
\begin{theorem}\label{thm:main_concrete}
  As $B \to \infty$, we have
  \begin{align*}
    N_1(B) &=
    \frac{13}{4320}
    \left(\prod_p \left(1-\frac 1 p\right)^5\left(1+\frac 5 p\right)\right)
    B (\log B)^5 + O(B (\log B)^4 \log\log B),
    \\
    N_2(B) & = 
    \frac{1}{32}
    \left(\prod_p \left(1-\frac 1 p\right)^3\left(1+\frac 3 p\right)\right)
    B (\log B)^4 + O(B(\log B)^3 \log\log B),\\
    N_3(B) & = 
    \frac 1 8
    \left(\prod_p \left(1-\frac 1 p\right)^2\left(1+\frac 2 p-\frac 1 {p^2}\right)\right)
    B (\log B)^3 + O(B(\log B)^2 \log\log B), \\
    N_4(B) &=  2 
    \left(\prod_p \left(1-\frac 1 p\right)\left(1+\frac 1 p\right)\right)
    B (\log B)^2 + O(B \log B \log\log B), \quad \text{and}\\
    N_5(B) = N_6(B) &= \frac{7}{24} 
    \left(\prod_p \left(1-\frac 1 p\right)^2\left(1+\frac 2 p\right)\right)
    B (\log B)^3 + O(B(\log B)^2 \log\log B).
  \end{align*}
\end{theorem}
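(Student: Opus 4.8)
The plan is to count, for each of the six boundaries, the integral points via a passage to the universal torsor of the minimal desingularization $\widetilde{S}$ of $S$. First I would work out the geometry: blow up the two singularities to get $\widetilde{S} \to S$, identify the extended Dynkin-type configuration of negative curves (the three lines $L_1, L_2, L_3$ together with the exceptional curves over $Q_1$ and $Q_2$), and compute the Picard group, the effective cone, and the anticanonical class. This lets me verify Lemma~\ref{lem:heights}, i.e.\ that each $H_i$ really is a log-anticanonical height for the pair $(\widetilde{S}, D_i)$ where $D_i$ is the strict transform of the boundary $Z_i$ together with (for the singularities) the relevant exceptional divisors; the exponent $b$ in $(\log B)^{b-1}$ should come out as the rank of the relevant Picard group minus the number of boundary components, matching $6, 5, 4, 3, 4, 4$ in the five stated exponents.

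Next I would parametrize the points. For each $i$, the integrality condition~\eqref{eq:gcd} or \eqref{eq:gcd-lines} together with the coprimality of $(x_0,\dots,x_4)$ should, after introducing auxiliary variables for the successive gcd's appearing when one resolves the singularities, be translated into a bijection between the set counted by $N_i(B)$ and a set of integral points on (an open subset of) the universal torsor $\mathcal{T}_i$ over $\widetilde S$, cut out by the torsor equation(s) coming from~\eqref{eq:def_eq} and a system of coprimality conditions indexed by the negative curves. Concretely, I expect the two conics in~\eqref{eq:def_eq} to unfold into a torsor equation of the form (monomial) $+$ (monomial) $+$ (monomial) $=0$ in the new coordinates, and the height $H_i$ to become a monomial in the torsor coordinates bounded by $B$. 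The cases $i=1,2,3$ will have the most torsor variables (largest Picard rank) and hence be the most laborious; the line cases $i=4,5,6$ are lower-rank and the equality $N_5(B)=N_6(B)$ should be visible from a symmetry of the torsor after the change $x_0 \mapsto x_0 + x_3$.

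Then comes the analytic counting. For each $i$ I would sum over the torsor variables one at a time: estimate the innermost count (typically the number of solutions of the torsor equation in one variable lying in a region, subject to congruence/coprimality constraints) by the volume of the region times a density factor obtained by Möbius inversion over the coprimality conditions, controlling the error by the measure of the boundary of the region plus a divisor-function term; then integrate (sum) over the remaining variables in an order dictated by the cone of effective curves, at each stage invoking a lattice-point count in an expanding domain. This produces a main term of the form $c \cdot B(\log B)^{b-1}$ with $c$ a product of a real density (the archimedean volume integral over the cone) and the convergent Euler product $\prod_p(1-1/p)^{b}(1 + \cdots/p + \cdots)$ displayed in the theorem, together with the rational prefactor; the error terms $O(B(\log B)^{b-2}\log\log B)$ arise from the accumulated lattice-point and divisor-sum errors, with the $\log\log B$ from summing $\sum_{n\le B} \tau(n)/n \ll (\log B)\log\log$-type tails or from restricting a summation variable to a dyadic-type range.

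The hard part will be the bookkeeping in the highest-rank cases $N_1$, $N_2$, and especially $N_3$: setting up the correct torsor (including the non–weak-del-Pezzo-pair boundary $Z_3 = Q_1 \cup Q_2$, which forces simultaneous coprimality conditions of two different kinds), choosing a summation order that keeps every partial sum convergent, and summing the local densities into the stated Euler products while tracking the exact rational constants $\tfrac{13}{4320}$, $\tfrac1{32}$, $\tfrac18$, $2$, $\tfrac7{24}$ — these constants are where arithmetic errors are easiest to make, so I would cross-check them against the geometric interpretation of Theorem~\ref{thm:main_abstract} (the Tamagawa number times the combinatorial $\alpha$-constant of \cite{arXiv:1006.3345v2,wilsch-toric}) as an independent consistency check. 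A secondary technical obstacle is handling the excluded lines $\{x_2 = 0\}$ and verifying that removing them is what makes the counts finite while not affecting the leading asymptotics, i.e.\ that the contribution of points with small $|x_2|$ but in $V$ is genuinely of lower order.
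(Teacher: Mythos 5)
Your overall route---pass to the universal torsor of the minimal desingularization, turn the integrality and height conditions into a lattice-point count with coprimality conditions indexed by the negative curves, sum variable by variable, and assemble the main term as a real density times an Euler product---is exactly the paper's strategy (Sections~\ref{sec:torsor}--\ref{sec:volumes}), including the symmetry argument for $N_5=N_6$. But your plan is modeled on the rational-points machinery and misses the two places where that machinery genuinely breaks and the paper needs new arguments. First, you assume the height becomes a (maximum of) monomial(s) of log-anticanonical degree in torsor coordinates, bounded by $B$. This holds for $i\in\{1,2,4,5,6\}$ because $\omega_{\tS}(D_i)^\vee$ is big and base point free (Lemma~\ref{lem:big-and-nef}), but for the boundary $Z_3=Q_1\cup Q_2$ the log-anticanonical class is big and \emph{not} nef: there is no such set of sections, $H_3$ on the torsor contains the term $\min\{|\eta_1^2\eta_2^2|,|\eta_8\eta_9|\}$ (Lemma~\ref{lem:heights}), and one must first justify replacing $H_3$ by a max-of-monomials height $H_3'$ up to an admissible error (Lemma~\ref{lem:adjust_V3}) before any polytope-volume computation can start. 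Second, in case $1$ the condition inherited from the torsor equation after eliminating $\eta_9$ is $|(\eta_2\eta_8^2+\eta_4\eta_5^3\eta_6^2\eta_8)/\eta_1|\le B$, which is inhomogeneous for the Picard grading, so the standard final step (substitute $\eta_j=B^{t_j}$ and read off the volume of a polytope) does not apply; the paper needs a separate, delicate estimation (Lemmas~\ref{lem:adjust_V1} and~\ref{lem:treat_W}) to replace it by $|\eta_2\eta_8^2/\eta_1|\le B$. These are precisely the steps producing the constants $\tfrac{13}{4320}$ and $\tfrac18$, and your generic scheme would stall there.

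Two smaller but real points. The passage to the torsor is not a bijection but a $2^{6-\#D_i}$-to-one correspondence, and the crucial translation of integrality along $Z_i$ is that the torsor coordinates over the boundary components are units, i.e.\ $|\eta_j|=1$ for $E_j\subset D_i$ (Lemma~\ref{lem:torsor}); without making this explicit you cannot even set up the count. Also, your heuristic ``exponent $=$ Picard rank minus number of boundary components'' does not produce the values $6,5,4,3,4,4$ you quote (e.g.\ $\rk\Pic\tU_2=3$ while $b_2=5$): the correct prediction is $\rk\Pic\tU_i$ plus the maximal number of boundary components through a common point (the dimension of the Clemens complex plus one), as in~\eqref{eq:exponent}, and this structure is what determines which torsor variables contribute logarithms in the volume asymptotics. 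None of this invalidates the torsor approach, but as written the proposal has a genuine gap in cases $1$ and $3$ and a wrong expectation for the shape of the heights and exponents.
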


Cases 5 and 6 are symmetric: the involutive automorphism
\begin{equation}\label{eq:symmetry}
  (x_0,x_1,x_2,x_3,x_4)\mapsto (x_0+x_3,-x_1,x_2,-x_3,x_4)
\end{equation}
of $S$ exchanges the lines $L_2$ and $L_3$ and the height functions $H_5$ and
$H_6$, while leaving $V=S\setminus\{x_2=0\}$ invariant, whence
$N_5(B)=N_6(B)$.

\subsection{The expected asymptotic formula}\label{sec:expected}

Similarly to the case of rational points~\cite{MR1679843,MR2019019}, our
asymptotic formulas for the number of integral points of bounded height should
be interpreted on a desingularization $\rho \colon \tS \to S$. Here, $\tS$ is
a \emph{weak del Pezzo surface}, that is, a smooth projective surface whose
anticanonical bundle $\omega_{\tS}^\vee$ is big and nef (but not ample in our
case).

To interpret the number of points on $\Us_i = \Ss \setminus \Zs_i$, we study a
desingularization $\tU_i = \tS \setminus D_i$ of $U_i$, where
$D_i=\rho^{-1}(Z_i)$ is a reduced effective divisor with strict normal
crossings.
In the context of integral points, the log-anticanonical bundle
$\omega_{\tS}(D_i)^\vee$ assumes the role of the anticanonical bundle.
From this point of view, Theorem~\ref{thm:main_concrete} can be interpreted in
the framework described in~\cite{MR2740045}.

The minimal desingularization $\rho: \tS \to S$ is an iterated blow-up of
$\PP^2_\QQ$ in five points. The analogous blow-up of $\PP^2_\ZZ$ results in an
integral model $\rho\colon \tSs \to \Ss$ (see section~Section~\ref{sec:torsor} for more details).  Then $D_1$,
$D_2$ are the divisors above $Q_1,Q_2$, respectively, and $D_3=D_1+D_2$ is the one over
both; see Figure~\ref{fig:clemens_complex} for their dual graph (Dynkin
diagram). Our discussion is simplified by the fact that the pairs $(\tS,D_i)$
are split, in the sense that $\Pic\tS \to \Pic\tS_{\Qbar}$ is an
isomorphism and~\cite[Definition~1.6]{MR3732687} holds, and by the fact that we
are working over $\QQ$.
Let $\tU_i,\tUs_i$ be the complement of $D_i,\overline{D_i}$ in $\tS,\tSs$,
respectively, where $\overline{D_i}$ is the Zariski closure of $D_i$ in
$\tSs$.  The preimage of the complement $V$ of the lines on $S$ is the
complemenent $\tV$ of all negative curves on $\tS$.

This leads to the reinterpretation of our counting problem as
\begin{equation*}
  N_i(B) = \#\{\overline{P} \in \tUs_i(\ZZ) \cap \tV(\QQ) \mid H_i(\rho(\overline{P})) \le B\}
\end{equation*}
on the minimal desingularization, and we prove in Lemma~\ref{lem:heights} that
$H_i \circ \rho$ is a log-anticanonical height function on
$\tUs_i(\ZZ) \cap \tV(\QQ)$. Note that the log-anticanonical bundle
$\omega_{\tS}(D_i)^\vee$ is big and nef for $i=1,2,4,5,6$, but big and
\emph{not} nef for $i=3$ (Lemma~\ref{lem:big-and-nef}); the unusual shape of
$H_3$ is clearly related to this.

\medskip

From the shape of asymptotic formulas in previous results~\cite{arXiv:1006.3345v2,MR2999313,MR3117310,arXiv:1901.08503} and
the study of volume asymptotics in~\cite{MR2740045}, we expect that
\begin{equation*}
  N_i(B) \sim c_{i,\fin}c_{i,\infty} B (\log B)^{b_i-1},
\end{equation*}
where the leading constant can be decomposed into a finite part $c_{i,\fin}$ and an archimedean
part $c_{i,\infty}$ that we shall describe and determine in Section~\ref{sec:leading-constant}
precisely.

The finite part
\begin{equation}\label{eq:c_fin}
c_{i,\fin} = \prod_p \left(1-\frac 1 p \right)^{\rk\Pic\tU_i} \tau_{(\tS,D_i), p}(\tUs_i(\ZZ_p)),
\end{equation}
which behaves similarly as in the case of
rational points, is defined as an Euler product of convergence factors and $p$-adic Tamagawa numbers.
We compute the latter as $p$-adic integrals over $\tUs_i(\ZZ_p)$
(Lemma~\ref{lem:surface-finite-volumes}); they turn out to be simply
$\#\tUs_i(\FF_p)/p^{\dim S}$.
This reflects the fact that integral points should be distributed evenly in the set
$\tUs_i(\ZZ_p)$, which has positive and finite volume with respect to the modified
Tamagawa measure $\tau_{(\tS,D_i),p}$ defined in~\cite{MR2740045}. (However, we do not prove such an equidistribution result here.)

\begin{figure}[t]
  \begin{center}
    \medskip
    \begin{tikzpicture}
      \node (E7) at (-4,0) [draw, shape=circle, fill=black, scale=.3]{};
      \node (E3) at (-2,0) [draw, shape=circle, fill=black, scale=.3]{};
      \node (E4) at (0,0)  [draw, shape=circle, fill=black, scale=.3]{};
      \node (E6) at (2,0)  [draw, shape=circle, fill=black, scale=.3]{};

      \draw[thick] (E3) -- (E4) node[pos=0.5, anchor=north]{$A_1$};
      \draw[thick] (E4) -- (E6) node[pos=0.5, anchor=north]{$A_2$};
    \end{tikzpicture}
    \caption{The Clemens complex of $D_3$ is the disjoint union of those of
      $D_1$ (left) and $D_2$ (right). It is the Dynkin diagram of the
      $\Ab_1$- and $\Ab_3$-singularities $Q_1,Q_2$.}\label{fig:clemens_complex}
  \end{center}
\end{figure}
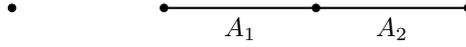

On the other hand, 100\% of the integral points are arbitrarily close to the
boundary with respect to the real-analytic topology, ordered by height. This
makes the analysis of $c_{i,\infty}$ much more delicate than for rational points.
More precisely, the points close to the minimal strata of the boundary---that is,
the intersection of a maximal set of intersecting components of $D_i$---should
dominate the counting function. These strata are encoded in the \emph{(analytic) Clemens
  complex} $\Can_\RR(D_i)$.  For a split surface, the vertices of this Clemens
complex correspond to the irreducible components of the boundary divisor
$D_i$, and there is an edge for each intersection point of two divisors.
The
archimedean constant
\begin{equation}\label{eq:c_infty}
  c_{i,\infty} = \sum_A \alpha_{i,A} \tau_{i,D_A,\infty}(D_A(\RR))\\
\end{equation}
is a sum over the faces $A$ of maximal dimension of
the Clemens complex, which correspond to the minimal strata $D_A$ of $D_i$.
For each maximal-dimensional face $A$, we have a product of a rational factor
$\alpha_{i,A}$ and an archimedean Tamagawa number $\tau_{i,D_A,\infty}(D_A(\RR))$
coming from a \emph{residue measure} as defined in~\cite{MR2740045}.
This measure can be interpreted as a real density, which is supported on
$D_A(\RR)$ and should measure the distribution of points in neighborhoods of open
subsets of $D_A(\RR)$.
From another point of view, the set $\tS(\RR)$ has infinite volume with respect
to a modified measure $\tau_{(\tS,D_i),\infty}$ as above,
and $\tau_{i,D_A,\infty}(D_A(\RR))$ appears in the leading constant of the
asymptotic volume of \emph{height balls} with respect to said
measure~(cf.~\cite[Propositions~2.5.1, 4.2.4]{MR2740045}).

In the first case, the Clemens complex consists of only one vertex
corresponding to the boundary divisor above the $\Ab_1$-singularity $Q_1$, and
integral points accumulate near it (Figure~\ref{fig:points_U1}). In the second
and third case, the maximal dimensional faces $A_1,A_2$ of the Clemens complex
correspond to the two intersection points $D_{A_1},D_{A_2}$ of the divisors
above the $\Ab_3$-singularity, and ``most'' integral points are very close
to these two intersection points
(Figure~\ref{fig:points_U2}). Correspondingly, the archimedean Tamagawa number
is the volume of the boundary divisor in the first case, and it is the volume
of the two intersection points in the second and third case. In the remaining cases, it similarly is a
volume of intersection points (Lemma~\ref{lem:arch-volumes}).

The rational factor $\alpha_{i,A}$ is particularly interesting in our
examples.  It is introduced in~\cite{arXiv:1006.3345v2} for toric varieties and generalized in~\cite{wilsch-toric}
to be 
\begin{equation}\label{eq:alpha}
  \alpha_{i,A} = \vol\{x \in (\Eff\tU_{i,A})^\vee \mid
  \langle x, \omega_{\tS}(D_i)^\vee|_{\tU_{i,A}}\rangle = 1\},
\end{equation}
where $\tU_{i,A}$ is the subvariety consisting of $\tU_i$ and the divisors
corresponding to $A$.
For vector groups~\cite{MR2999313} and wonderful compactifications~\cite{MR3117310}, the
effective cone is generated by the boundary divisors and simplicial, which
makes the treatment of this factor easy. In~\cite{arXiv:1901.08503},
it behaves similarly as Peyre's $\alpha$ for projective varieties
since the boundary has just one component; it is also much simpler since the
Picard number is $2$.  Our second and following cases behave in a different
way since the Clemens complex is not a simplex, providing the first nontrivial
treatment of this factor for a nontoric variety.
Here, it turns out that the resulting polytopes for the different maximal faces fit together to one
polytope whose volume appears in the leading constant of the counting problem
(Lemma~\ref{lem:arch-constants}). In case $4$, one of the polytopes has volume $0$, making this an example for the obstruction~\cite[Theorem~2.4.1~(i)]{wilsch-toric} to the existence of integral points near the corresponding minimal stratum of the boundary (Remark~\ref{rmk:obstruction}).

\begin{figure}[t]
  \begin{center}
    \includegraphics{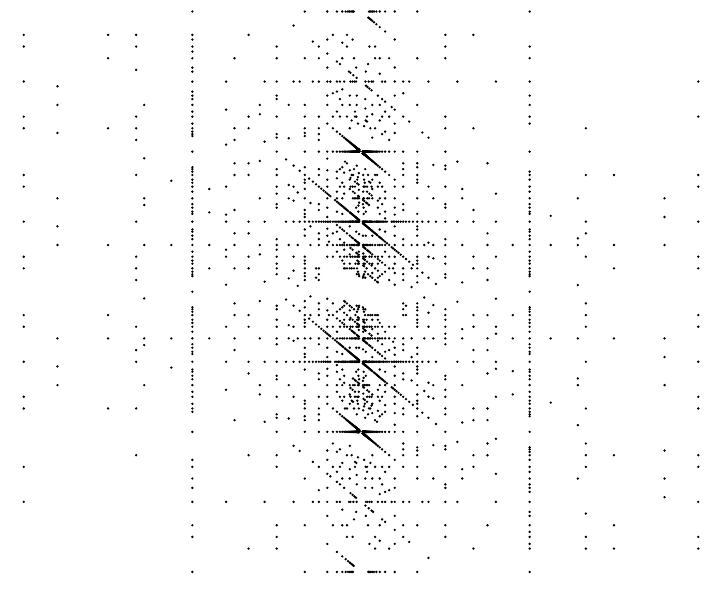}
  \end{center}
  \caption{Integral points on $\tUs_1$ of height $\le 90$. The boundary
    divisor is the central vertical line. Some horizontal and diagonal lines
    look accumulating, but in fact are not: They contain $\sim c^\prime B$
    points, which is less than the $c B(\log B)^5$ points on $U$; the
    constants $c^\prime$ can however be up to $2$, while the constant $c$ in
    our main theorem is numerically $\approx .0003$.}\label{fig:points_U1}
\end{figure}

\begin{figure}[t]
  \begin{center}
    \includegraphics{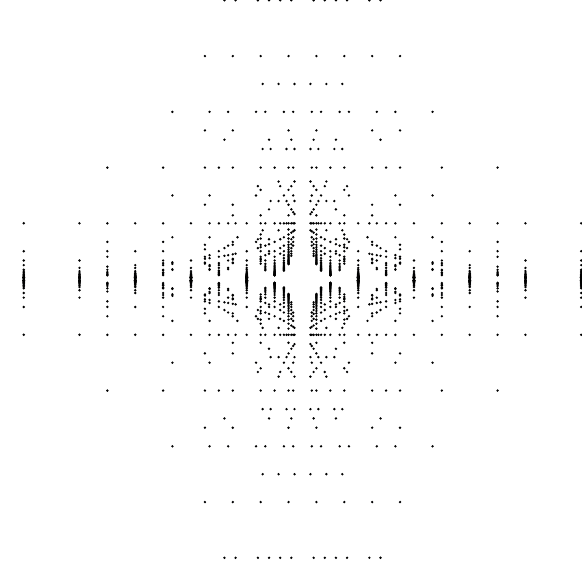}
    \includegraphics{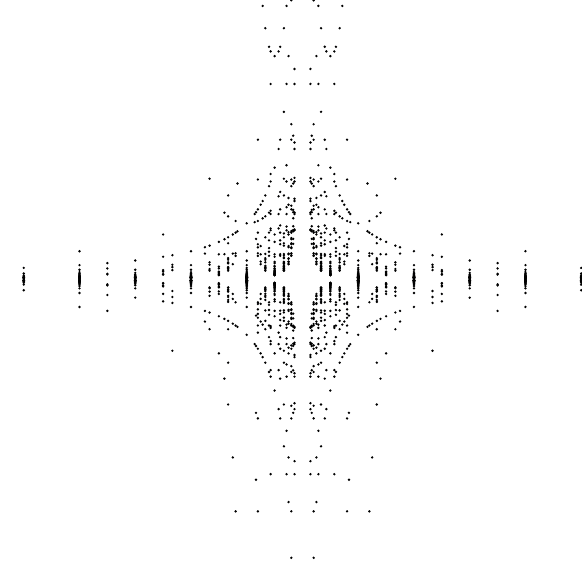}
  \end{center}
  \caption{Integral points on
    $\tUs_2$ of height $\le 60$, in neighborhoods of $D_{A_1}$ (left) and
    $D_{A_2}$ (right). Most points are close to the three boundary divisors,
    which are the central horizontal line and two vertical lines here.}\label{fig:points_U2}
\end{figure}

The exponent of $\log B$ is expected to be $b_i-1$, where
\begin{equation}\label{eq:exponent}
  b_i = \rk\Pic\tU_i  - \rk \QQ[U_i]/\QQ^\times + \dim \Can_\RR(D_i) + 1.
\end{equation}
Here, $\dim \Can_\RR(D_i) + 1$ is the maximal number of components of the
boundary divisor $D_i$ that meet in the same point, and $\QQ[U_i]^\times = \QQ^\times$ in each case. 
While the obstruction described in~\cite{wilsch-toric} can lead to this number being smaller than expected if it affects all maximal-dimensional faces of the Clemens complex, this does not happen in our fourth case as there are three unobstructed faces remaining.

We can reformulate Theorem~\ref{thm:main_concrete} as follows:
\begin{theorem}\label{thm:main_abstract}
  For $i \in \{1,\dots,6\}$, we have
  \begin{equation}\label{eq:main-formula}
    N_i(B) = c_{i,\infty} c_{i,\fin} B(\log B)^{b_i-1}(1+o(1))
  \end{equation}
  as $B \to \infty$, where the constants $c_{i,\infty}$, $c_{i,\fin}$, and $b_i$
  are as in~\eqref{eq:c_fin},~\eqref{eq:c_infty}, and~\eqref{eq:exponent}, respectively.
\end{theorem}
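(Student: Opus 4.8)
The plan is to derive Theorem~\ref{thm:main_abstract} from Theorem~\ref{thm:main_concrete} by identifying the explicit constants and exponents appearing in the latter with the geometric quantities $c_{i,\fin}$, $c_{i,\infty}$, and $b_i$ defined in~\eqref{eq:c_fin},~\eqref{eq:c_infty}, and~\eqref{eq:exponent}. Since Theorem~\ref{thm:main_concrete} already provides asymptotic formulas of the shape $c_i B(\log B)^{m_i}$ with explicit $c_i$ and $m_i$, the task reduces to the purely bookkeeping identities $m_i = b_i - 1$ and $c_i = c_{i,\infty} c_{i,\fin}$ for each $i \in \{1,\dots,6\}$. I would organize this around the auxiliary lemmas advertised in the introduction: Lemma~\ref{lem:heights} (the heights are log-anticanonical), Lemma~\ref{lem:big-and-nef} (bigness and nefness of $\omega_{\tS}(D_i)^\vee$), Lemma~\ref{lem:surface-finite-volumes} and Lemma~\ref{lem:arch-volumes} (the $p$-adic and archimedean Tamagawa numbers), and Lemma~\ref{lem:arch-constants} (the rational factors $\alpha_{i,A}$ and the polytope volumes).

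First I would verify the exponents. For each $i$ one computes $\rk\Pic\tU_i$ from the geometry of the minimal desingularization $\tS$ (an iterated blow-up of $\PP^2$ in five points, with $\rk\Pic\tS = 7$) minus the number of irreducible components of $D_i$; one uses $\QQ[U_i]^\times/\QQ^\times = 0$ as asserted in the text after~\eqref{eq:exponent}; and one reads $\dim\Can_\RR(D_i) + 1$ off Figure~\ref{fig:clemens_complex} and the analogous pictures for $D_4, D_5, D_6$ as the maximal number of components of $D_i$ meeting at a point. Concretely: in case~$1$ the boundary over the $\Ab_1$-singularity is a single $(-2)$-curve so $\dim\Can_\RR = 0$ and one checks $b_1 - 1 = 5$; in cases~$2$ and~$3$ the $\Ab_3$-chain gives $\dim\Can_\RR = 1$ (two components meeting at each of the two nodes), matching $b_2 - 1 = 4$ and $b_3 - 1 = 3$; and the line cases give $b_4 - 1 = 2$, $b_5 - 1 = b_6 - 1 = 3$. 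One subtlety flagged in the text is that $\omega_{\tS}(D_3)^\vee$ is big but not nef, so I would note that~\eqref{eq:exponent} is applied as a definition here rather than as a consequence of a general nef-cone computation, and that the agreement with the exponent $3$ in Theorem~\ref{thm:main_concrete} is precisely what must be checked.

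Next I would match the leading constants. For the finite part, by Lemma~\ref{lem:surface-finite-volumes} the $p$-adic Tamagawa number $\tau_{(\tS,D_i),p}(\tUs_i(\ZZ_p))$ equals $\#\tUs_i(\FF_p)/p^2$, so~\eqref{eq:c_fin} becomes $\prod_p (1 - 1/p)^{\rk\Pic\tU_i} \#\tUs_i(\FF_p)/p^2$; I would compute $\#\tUs_i(\FF_p)$ by a point count (reducing the defining equations and integrality conditions~\eqref{eq:gcd},~\eqref{eq:gcd-lines} mod $p$) and check that the resulting Euler products coincide with the products $\prod_p (1 - 1/p)^{\rk} (1 + \cdot)$ displayed in Theorem~\ref{thm:main_concrete} — e.g. $(1 - 1/p)^5(1 + 5/p)$ in case~$1$, $(1 - 1/p)^2(1 + 2/p - 1/p^2)$ in case~$3$, and so on. For the archimedean part, $c_{i,\infty}$ is the sum~\eqref{eq:c_infty} over maximal faces $A$ of $\Can_\RR(D_i)$ of $\alpha_{i,A}\,\tau_{i,D_A,\infty}(D_A(\RR))$; here Lemma~\ref{lem:arch-volumes} supplies the residue-measure volumes $\tau_{i,D_A,\infty}(D_A(\RR))$ and Lemma~\ref{lem:arch-constants} supplies the rational polytope volumes $\alpha_{i,A}$ from~\eqref{eq:alpha}, together with the observation that the polytopes for different faces glue to a single polytope. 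I would then confront the product $c_{i,\infty} c_{i,\fin}$ with the rational prefactors $13/4320$, $1/32$, $1/8$, $2$, $7/24$ in Theorem~\ref{thm:main_concrete}; in case~$4$ one must additionally check that one of the two polytopes has volume $0$ (per Remark~\ref{rmk:obstruction}), so that $c_{4,\infty}$ is a single surviving term, and that the three surviving faces in the obstructed count still produce the stated $(\log B)^2$.

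The main obstacle is the archimedean matching: unlike the finite part, which is a routine Euler-product point count, the constant $c_{i,\infty}$ requires correctly setting up the residue measures of~\cite{MR2740045} on each minimal stratum $D_A$, computing the volume of $D_A(\RR)$ in the induced metric coming from $H_i$, and — the genuinely new point here — computing the volumes~\eqref{eq:alpha} of the polytopes cut out in $(\Eff\tU_{i,A})^\vee$ by the log-anticanonical class, which demands an explicit description of the effective cone of each $\tU_{i,A}$ and of how these cones and their dual polytopes fit together across the faces of the Clemens complex. Verifying that this polytope-gluing produces exactly the rational constants above, and in case~$3$ handling the non-nef log-anticanonical bundle and in case~$4$ the vanishing face, is where essentially all the work lies; once Lemmas~\ref{lem:heights}--\ref{lem:arch-constants} are in place, Theorem~\ref{thm:main_abstract} follows by collecting these identifications case by case.
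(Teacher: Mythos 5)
Your proposal follows essentially the same route as the paper: Theorem~\ref{thm:main_abstract} is deduced from Theorem~\ref{thm:main_concrete} purely by matching constants, using the identity $\rk\Pic\tU_i=\rk\Pic\tS-\#D_i$ for the exponent (and for cancelling the powers of $2$ between Lemma~\ref{lem:param} and Lemma~\ref{lem:V0}), Lemma~\ref{lem:surface-finite-volumes} for $c_{i,\fin}$, and Lemmas~\ref{lem:arch-volumes},~\ref{lem:alpha},~\ref{lem:arch-constants} for $c_{i,\infty}$, including the observation that the polytopes of the $\alpha_{i,A}$ glue to the polytope defining $C_i$. The one place where you deviate is the finite part: the paper does not count points modulo $p$ at all, but computes $\tau_{(\tS,D_i),p}(\tUs_i(\ZZ_p))$ as explicit $p$-adic integrals over the images of the integral points in a chart (Lemma~\ref{lem:finite-volumes-sets} and the pushforward densities), and the matching with Theorem~\ref{thm:main_concrete} is then immediate because these values reproduce exactly the local densities $\omega_{i,p}$ already produced by the torsor computation in Lemma~\ref{lem:surface-result-of-count}. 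Your point-count route is viable in principle (the paper itself remarks that the volumes equal $\#\tUs_i(\FF_p)/p^2$), but as phrased it is imprecise: the gcd conditions~\eqref{eq:gcd},~\eqref{eq:gcd-lines} are conditions on $\ZZ$-points of the singular model and do not literally ``reduce mod $p$''; the relevant count is of $\FF_p$-points of the smooth integral model $\tSs$ away from $\overline{D_i}$, after which one still needs the comparison between this count and the $p$-adic volume. A few further slips, none fatal: $\rk\Pic\tS=6$ (a five-point blow-up of $\PP^2$), not $7$, so $\rk\Pic\tU_i=6-\#D_i$, consistent with your stated values of $b_i$; case~$4$ has four maximal faces $A_1,\dots,A_4$ of which exactly one is obstructed (not ``one of the two polytopes''); and for $i=6$ you should, as the paper does, define the metric on $\omega_{\tS}(D_6)^\vee$ by pulling back the one on $\omega_{\tS}(D_5)^\vee$ along the symmetry~\eqref{eq:symmetry}, so that all constants for case~$6$ coincide with those of case~$5$.
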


This confirms the expectations extracted from~\cite{MR2740045,arXiv:1006.3345v2,wilsch-toric}.

\subsection{Strategy of the proof}

In Section~\ref{sec:classification}, we define and classify \emph{weak del
  Pezzo pairs} $(\tS,D)$, which have big and nef log-anticanonical bundle
$\omega_{\tS}(D)^\vee$
(Theorem~\ref{thm:classification}).

In Section~\ref{sec:torsor}, we describe a universal torsor on the
minimal desingularization of $S$, we show that our height functions
are log-anticanonical, and we describe them in terms of Cox
coordinates. This leads to a completely explicit counting problem on
the universal torsor (Lemma~\ref{lem:param}), with a
$(2^{\rk\Pic\tU_i}:1)$-map to our set of integral points of bounded
height: roughly, the torsor variables corresponding to the boundary
divisors must be $\pm 1$, and for big and base point free (whence nef)
log-anticanonical class,
the height function $H_i$ is given by monomials in the Cox ring of
log-anticanonical degree. The third case seems to be one of the first
examples of the universal torsor method with respect to a height for a divisor
class that is big and \emph{not} nef.

In Section~\ref{sec:counting}, we estimate the number of points in our
counting problem on the universal torsor using analytic techniques. Here, we
approximate summations over the torsor variables by real integrals
$V_{i,0}(B)$; the coprimality conditions lead to an Euler product that agrees
with $c_{i,\fin}$ (Lemmas~\ref{lem:first_summation} and~\ref{lem:surface-result-of-count}).
This step is similar to the case of
rational points treated in~\cite{MR2520770}; hence, we shall be very brief.

In Section~\ref{sec:volumes}, to complete the proof of
Theorem~\ref{thm:main_concrete}, our goal is to transform $V_{i,0}(B)$ into
$2^{\rk\Pic\tU_i} C_i B(\log B)^{b_i-1}$, where $C_i$ is the product of a the
volume of a polytope (which turns out to be $\sum \alpha_{i,A}$) and a real
density (which agrees with the archimedean Tamagawa numbers
$\tau_{i,D_A,\infty}(D_A(\RR))$), up to a negligible error term.
In the first case, there is
a complication due to an inhomogeneous expression (with respect to the grading
by the Picard group) in the domain of $V_{1,0}$ (Lemma~\ref{lem:adjust_V1}
and more importantly Lemma~\ref{lem:treat_W});
here, a subtle estimation is necessary. In the third case, we modify the
height function $H_3$ to $H_3'$ (which coincides essentially with $H_2$) as in
Lemma~\ref{lem:adjust_V3}. These extra complications have never appeared in
the universal torsor method for rational points; we believe that they are
typical for integral points and nonnef heights.

In Section~\ref{sec:leading-constant}, we prove
Theorem~\ref{thm:main_abstract} by explicitly computing the expected
constants discussed in Section~\ref{sec:expected}.

\subsection*{Acknowledgements}

The first author was partly supported by grant DE 1646/4-2 of the
Deutsche Forschungsgemeinschaft. The second author was partly
supported by FWF grant P~32428-N35 and conducted part of this work as
a guest at the Institut de Mathématiques de Jussieu--Paris Rive Gauche
invited by Antoine Chambert-Loir and funded by DAAD\@. The authors
thank the anonymous referees for their useful remarks and suggestions.

\section{Classification of weak del Pezzo pairs}\label{sec:classification}

For us, a \emph{weak del Pezzo pair} $(\tS,D)$ consists of a smooth
projective surface $\tS$ with a reduced effective divisor $D$ with strict normal
crossings such that the log-anticanonical bundle $\omega_{\tS}(D)^\vee$ is big
and nef. The aim of this section is to study the possible choices of divisors $D$ on a weak del Pezzo surface $\tS$ that render the pair
$(\tS,D)$ weak del Pezzo in this sense.

\begin{remark}
  Considering \emph{pairs} $(X,D)$ is standard when studying integral points:
  While rational and integral points coincide on complete
  varieties as a consequence of the valuative criterion for properness, the
  study of integral points becomes a distinct problem on an integral model $\Us$ of a
  noncomplete variety $U$.
  Then one passes to a compactification, more precisely, a smooth projective
  variety $X$ containing $U$ such that the boundary $D = X \setminus U$ is a
  reduced effective divisor with strict normal crossings. In particular, the
  pair $(X,D)$ is smooth and divisorially log terminal.

  The goal is then to count the number of points on $\Us$ of bounded
  log-anticanonical height (that is, with respect to $\omega_X(D)^\vee$),
  excluding any strict subvarieties (or, more generally, \emph{thin} subsets)
  whose points would contribute to the main term. Setting $D=0$ then recovers the
  setting of Manin's conjecture on rational points.
\end{remark}

\begin{remark}
  In its original form~\cite{MR974910,MR1340296}, Manin's conjecture makes a
  prediction about the number of rational points on \emph{smooth Fano
    varieties}: smooth projective varieties whose anticanonical bundle is
  ample.  These conditions can be relaxed, for example only requiring that the anticanonical be big and nef, viz.\ to \emph{weak Fano varieties} and the $2$-dimensional varieties therof, \emph{weak del Pezzo surfaces}. Weak del Pezzo surfaces
  $\tS$ are precisely the smooth del Pezzo surfaces $\tS = S$ and the minimal
  desingularizations $\rho\colon \tS\to S$ of del Pezzo surfaces with only
  $\ADE$-singularities~\cite{MR579026_new}.

  Since $\rho$ is a crepant resolution---that is,
  $\omega_{\tS} = \rho^* \omega_S$---counting points on $S$ of bounded
  anticanonical height amounts to counting points on $\tS$ of bounded
  anticanonical height after excluding points on the exceptional locus.
  By~\cite{MR1679843,MR2019019}, an asymptotic formula for the number of rational
  points on $S$ should be interpreted in terms of its minimal
  desingularization $\tS$; for example, the Picard rank $\rho$ of $\tS$ appears in
  the expected asymptotic formula. The number of rational points of bounded
  height has been shown to conform to the same prediction as in Manin's
  conjecture for many weak del Pezzo surfaces (see the references
  in~\cite[\S~6.4.1]{MR3307753}).

  Generalizing the question even further, it suffices to assume that the anticanonical bundle is big to guarantee that the number of rational points of bounded anticanonical height outside a suitable divisor is finite.
  Adding some conditions that make Peyre's constant well-defined leads to the notion of an \emph{almost Fano} variety~\cite[Définition~3.1]{MR2019019}, for which it makes sense to ask whether Manin's conjecture holds. While this is known to be the case for some of them, Lehmann, Sengupta, and Tanimoto showed that one cannot expect the conjecture to be true in general in this widest setting~\cite[Remark~1.1, Example~5.17]{arxiv.1805.10580}.
\end{remark}

To simplify the
exposition, let $\tS$ be a weak del Pezzo surface whose degree $d$ is at most $7$.  Let
$D=\sum_{\alpha \in \mathcal{A}} D_\alpha \subset \tS$ be a reduced and
effective divisor with strict normal crossings and irreducible components
$D_\alpha$.

\begin{lemma}\label{lem:nef-criterion}
    The log-anticanonical bundle $\omega_{\tS}(D)^\vee$ is
  nef if and only if all of the following conditions hold:
  \begin{enumerate}[label = (\roman*)]
    \item\label{enum:cond--2} If $E$ is a $(-2)$-curve and $D_\alpha.E>0$ for
      some $\alpha \in \mathcal{A}$, then $E \subset D$.
    \item\label{enum:cond--1} If $E$ is an arbitrary negative curve meeting
      two different $D_\alpha,D_\beta$ or one $D_\gamma \subset D$ with
      multiplicity $D_\gamma.E\ge 2$ (with
      $\alpha,\beta,\gamma \in \mathcal{A}$), then $E \subset D$.
    \item\label{enum:cond-no-three} If $E$ is a negative curve, then
    \[
      \sum_{\substack{\alpha \in \mathcal{A} \\ D_\alpha \ne E }} D_\alpha . E \le 2.
    \]
  \end{enumerate}
\end{lemma}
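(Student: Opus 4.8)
The plan is to reinterpret nefness of $\omega_{\tS}(D)^\vee$, which as a divisor class is $-K_{\tS}-D$, as a finite collection of numerical inequalities and then to match these with~\ref{enum:cond--2}--\ref{enum:cond-no-three}. The enabling input is the standard description of the cone of curves of a weak del Pezzo surface $\tS$ of degree at most $7$: it is rational polyhedral and generated by the classes of the negative curves on $\tS$, and every irreducible curve $E$ with $E^2<0$ is a $(-1)$-curve or a $(-2)$-curve, because the adjunction formula together with nefness of $-K_{\tS}$ forces $-K_{\tS}.E\in\{0,1\}$ with $E^2=-1$ or $E^2=-2$ accordingly (this is exactly where the hypothesis $d\le 7$ enters, as the statement fails for $\PP^2$ and for surfaces of degree $8$). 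Hence $-K_{\tS}-D$ is nef if and only if $(-K_{\tS}-D).E\ge 0$ for each of the finitely many negative curves $E$, and I would record the two identities valid for any such $E$: $-K_{\tS}.E$ equals $1$ for a $(-1)$-curve and $0$ for a $(-2)$-curve, while $-K_{\tS}.E-E^2=2$ in both cases.

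Next I would evaluate $(-K_{\tS}-D).E$ for a negative curve $E$, distinguishing whether $E$ is one of the components $D_\alpha$ of $D$. Write $m(E)=\sum_{\alpha\in\mathcal{A},\,D_\alpha\ne E}D_\alpha.E$ for the intersection of $E$ with the components of $D$ other than $E$. Then $D.E=E^2+m(E)$ if $E\subset D$ and $D.E=m(E)$ if $E\not\subset D$, so $(-K_{\tS}-D).E=2-m(E)$ when $E\subset D$ and $(-K_{\tS}-D).E=-K_{\tS}.E-m(E)$ otherwise. Consequently, nefness of $-K_{\tS}-D$ is equivalent to requiring, for every negative curve $E$, that $m(E)\le 2$ when $E\subset D$, that $m(E)\le 1$ when $E$ is a $(-1)$-curve with $E\not\subset D$, and that $m(E)=0$ when $E$ is a $(-2)$-curve with $E\not\subset D$.

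Finally I would match this list with~\ref{enum:cond--2}--\ref{enum:cond-no-three}. For a $(-2)$-curve $E\not\subset D$ one has $m(E)=\sum_\alpha D_\alpha.E$, so ``$m(E)=0$'' is the contrapositive of~\ref{enum:cond--2}. For a $(-1)$-curve $E\not\subset D$, the inequality $m(E)\ge 2$ holds exactly when $E$ meets two distinct components $D_\alpha,D_\beta$ or meets a single component $D_\gamma$ with $D_\gamma.E\ge 2$, so ``$m(E)\le 1$'' is the contrapositive of~\ref{enum:cond--1}; and~\ref{enum:cond--1} for $(-2)$-curves is already subsumed by~\ref{enum:cond--2}. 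Lastly, ``$m(E)\le 2$ for $E\subset D$'' is precisely~\ref{enum:cond-no-three} applied to the components of $D$, while for $E\not\subset D$ the inequality in~\ref{enum:cond-no-three} reads $D.E=m(E)\le 2$, which is automatic given the other two conditions. Reading these equivalences in both directions proves the lemma. The reasoning is routine; the genuine content is the reduction to negative curves, and the only points needing care are the bookkeeping of the self-intersection term when $E$ is itself a component of $D$ and the correct invocation of the structure of the cone of curves in degree $\le 7$.
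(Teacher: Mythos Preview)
Your proof is correct and follows essentially the same approach as the paper's: both reduce nefness of $-K_{\tS}-D$ to the inequalities $(-K_{\tS}-D).E\ge 0$ for negative curves $E$, split according to whether $E$ is a $(-1)$- or $(-2)$-curve and whether $E\subset D$, and then match the resulting numerical constraints with conditions~\ref{enum:cond--2}--\ref{enum:cond-no-three}. Your write-up is in fact slightly more careful than the paper's, since you justify explicitly why it suffices to test against negative curves (via the cone of curves of a weak del Pezzo surface of degree $\le 7$), whereas the paper simply asserts this.
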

\begin{proof}
  Recall that a divisor is nef if its intersection with all negative curves is nonnegative.
  If $E$ is a $(-2)$-curve, then 
  \[
    (-K-D).E = -K.E-D.E
    = 0 + 2\delta_{E \subset D}
    - \sum_{\substack{\alpha \in \mathcal{A} \\ D_\alpha \ne E }}D_\alpha . E,
  \]
  and this number is nonnegative if and only if~\ref{enum:cond--2} and~\ref{enum:cond-no-three} hold for $E$.
  If $E$ is a $(-1)$-curve, then 
  \[
    (-K-D).E = -K.E-D.E
    = 1 + \delta_{E \in D}
    - \sum_{\substack{\alpha \in \mathcal{A} \\ D_\alpha \ne E }}D_\alpha . E,
  \]
  and this number is nonnegative if and only if~\ref{enum:cond--1}
  and~\ref{enum:cond-no-three} hold for $E$.
\end{proof}

\begin{remark}\label{rmk:A-singularities}
  If $\rho\colon \tS\to S$ is the minimal desingularization of a singular del
  Pezzo surface, then
  Lemma~\ref{lem:nef-criterion} shows: If one of the
  $(-2)$-curves above a singularity $Q \in S$ is in $D$, then by
  Lemma~\ref{lem:nef-criterion}~\ref{enum:cond--2} all curves above this
  singularity must be in $D$. Similarly, if a $(-1)$-curve whose image in $S$
  contains a singularity $Q$ is in $D$, then all $(-2)$-curves above $Q$ must
  be in $D$. By Lemma~\ref{lem:nef-criterion}~\ref{enum:cond-no-three}, $Q$
  must be an $\Ab$-singularity in both cases.
\end{remark}

The surface $\tS$ can be described by a sequence of $r=9-\deg \tS$ blow-ups
\begin{equation*}
  \tS = \tS^{(r)} \xrightarrow{\pi_r} \tS^{(r-1)}\to \dots \to \tS^{(1)}
  \xrightarrow{\pi_1} \tS^{(0)} = \PP^2,
\end{equation*}
where $\pi_i$ is the blow-up in a point $p_i$ that
does not lie on a $(-2)$-curve on $\tS^{(i-1)}$.  Let $\pi\colon \tS \to \PP^2$ be their
composition.  Let $\ell_0=\pi^*\ell$, where $\ell$ is the class of a line on
$\PP^2$, and for $1\le i\le r$, let $\ell_i=(\pi_{i+1}\cdots\pi_{r})^{*}E^{(i)}$,
where $E^{(i)}$ is the exceptional divisor of the $i$th blow-up $\pi_i$.  Then
the Picard group of $\tS$ is freely generated by the classes
$\ell_0,\dots,\ell_r$. The intersection form is given by $\ell_i.\ell_j=0$ for $i\ne j$,
$\ell_0^2=1$, and $\ell_i^2=-1$ for $i\ge 1$. Let $P$ be the image of an
exceptional divisor of one of the the blow-ups in $\PP^2$, and $n_P$ be the
number of exceptional curves mapped to $P$. Then these negative curves form a chain,
the first $n_P-1$ of which are $(-2)$-curves whose classes
have the form $\ell_{i_1} - \ell_{i_2}$, \dots, $\ell_{i_{s-1}}-\ell_{i_s}$
followed by a $(-1)$-curve whose class has the form $\ell_{i_s}$.  The
anticanonical class is $3\ell_0 - \ell_1 - \cdots - \ell_r$, and we fix an
anticanonical divisor $-K$.  Denote by $[F]$ the class of a divisor or line
bundle $F$ in the Picard group. For $L_1,L_2\in \Pic (\tS)_\RR$, we write
$L_1 \le L_2$ if their difference $L_2 - L_1$ is in the effective cone.

\begin{lemma}\label{lem:aux-big-nef}
  Let $L\in\Pic(\tS)$.
  \begin{enumerate}[label = (\roman*)]
    \item\label{enum:aux-sum-lj} If $L\le \sum_{1\le j \le r} a_j \ell_j$ for some  $a_1,\dots,a_r\in\ZZ$, then $L$ is not big.
    \item\label{enum:aux-l0-li} If $L\le \ell_0 - \ell_i$ for some $i \ge 1$, then $L$ is not nef or not big.
  \end{enumerate}
\end{lemma}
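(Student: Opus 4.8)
\medskip

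The plan is to reduce both statements to the explicit intersection form on $\Pic(\tS)=\ZZ\ell_0\oplus\dots\oplus\ZZ\ell_r$, using two elementary facts about the effective cone. First, if $L\le M$ and $L$ is big, then $M=L+(M-L)$ is a sum of a big class and an effective class and hence is big; so it suffices, in each case, to produce a suitable class dominating $L$ that is manifestly not big (resp.\ to reach a contradiction from $L$ being nef and big). Second, $\ell_0=\pi^*\ell$ is nef, being the pullback of a globally generated class, and $\ell_0^2=1$, so $\ell_0$ is also big; consequently $M.\ell_0\ge 0$ for every effective class $M$, and $M.\ell_0>0$ for every big class $M$.

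For part~\ref{enum:aux-sum-lj}, suppose $L$ is big. Then $M=\sum_{1\le j\le r}a_j\ell_j$ is big by the first fact. But $M.\ell_0=\sum_{j\ge1}a_j(\ell_j.\ell_0)=0$, contradicting $M.\ell_0>0$; hence $L$ is not big.

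For part~\ref{enum:aux-l0-li}, suppose for contradiction that $L$ is both nef and big, and set $N=\ell_0-\ell_i-L$, which is effective by hypothesis. Intersecting $\ell_0-\ell_i=L+N$ with $\ell_0$ gives $1=\ell_0.L+\ell_0.N$ with $\ell_0.L>0$ (as $L$ is big) and $\ell_0.N\ge 0$ (as $N$ is effective); since these are integers, $\ell_0.N=0$. Writing $[N]=\sum_{j=0}^rc_j\ell_j$ in $\Pic(\tS)$, this means $c_0=0$, so $[L]=\ell_0-\ell_i-[N]=\ell_0-\sum_{j\ge1}b_j\ell_j$ with integers $b_j$, and therefore $L^2=1-\sum_{j\ge1}b_j^2$. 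As $L$ is nef and big, $L^2>0$, which over $\ZZ$ forces all $b_j=0$, i.e.\ $L=\ell_0$ and hence $N=-\ell_i$. But $-\ell_i$ is not effective: $\ell_i$ is a nonzero effective class, so it meets any ample class $A$ on $\tS$ positively, whereas $(-\ell_i).A<0$. This contradiction proves part~\ref{enum:aux-l0-li}.

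The computation is mostly bookkeeping; the one point demanding care is that in part~\ref{enum:aux-l0-li} the two hypotheses play genuinely different roles: bigness of $L$ is what produces $\ell_0.L>0$ and forces $N$ (hence $L$) to have vanishing $\ell_0$-coordinate, whereas nefness together with bigness is what produces $L^2>0$, and it is the integrality of the coordinates of $L$ that upgrades $L^2>0$ to the rigid conclusion $L=\ell_0$, after which the non-effectivity of $-\ell_i$ closes the argument.
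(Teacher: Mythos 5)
Your proof is correct, so let me compare it with the paper's. For part~\ref{enum:aux-sum-lj} you argue essentially as the paper does: both reduce to the fact that $\ell_0=\pi^*\ell$ is nef, so classes in the effective cone have nonnegative $\ell_0$-degree while big classes have strictly positive $\ell_0$-degree (the paper packages this as the non-effectivity of $-\eps\ell_0+\sum_j a_j\ell_j$). For part~\ref{enum:aux-l0-li}, however, you take a genuinely different route. The paper argues geometrically: assuming $L$ nef and big, any $(-2)$-curve meeting $\ell_0-\ell_i$ negatively must have class $\ell_i-\ell_j$ (citing the classification of negative curve classes in Manin's book), nefness of $L$ forces that curve into the effective difference, and iterating along the chain of curves contracted by $\pi$ eventually gives $L\le\ell_0-\ell_{i'}$ with $\ell_0-\ell_{i'}$ nef of self-intersection zero, hence not big, contradicting bigness of $L$ as in part~\ref{enum:aux-sum-lj}. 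Your argument is instead purely lattice-theoretic: positivity of the $\ell_0$-degree of a big class together with integrality pins the $\ell_0$-coefficient of $L$ to $1$, nef and big gives $L^2>0$, integrality then forces $L=\ell_0$, and the effective difference would be $-\ell_i$, which pairs negatively with an ample class. Your version avoids both the appeal to the list of negative curve classes and the termination argument for the chain, at the price of invoking the standard facts that big plus effective is big, that a big class has positive degree against the nef class $\ell_0$ (Kodaira's lemma), and that a nef and big divisor on a surface has positive self-intersection; the paper's iteration stays within the negative-curve bookkeeping it reuses in the proof of Proposition~\ref{prop:big-necessary} and yields the slightly stronger intermediate statement that $L$ is dominated by a nef class of square zero. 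Either argument is complete and adequate for the applications in Section~\ref{sec:classification}.
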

\begin{proof}
  For the first statement, we just have to note that $-\eps\ell_0 + \sum_{1\le j \le k} a_j \ell_j$ is not effective for any $\eps>0$.
  Turning to the second statement, assume for contradiction that $L$ is big and nef. Note that $\ell_0 - \ell_i$ has nonnegative intersection with all $(-1)$-curves. 

  If $\ell_0-\ell_i$ has (strictly) negative intersection with a $(-2)$-curve $E$, then this curve needs to have class $[E]=\ell_i - \ell_j$ for some $j\ne i$~(cf.~\cite[Theorem 25.5.3]{MR833513}). Writing $\ell_0 - \ell_i = L + [F]$ with effective $F$, we get $F.E<0$, so $E \subset F$, and $L \le \ell_0 - \ell_i - [E] = \ell_0 - \ell_j$. The only negative curves that could have negative intersection with $\ell_0-\ell_j$ have class $\ell_j - \ell_k$. As curves of classes $\ell_i - \ell_j$, $\ell_j - \ell_k$, etc., are contracted to a single point by $\pi$, we can eventually find an $\ell_{i'}$ with $L\le \ell_0 - \ell_{i'}$ and such that $\ell_0-\ell_{i'}$ has nonnegative intersection with all negative curves.
  Then $\ell_0 -\ell_{i'}$ is nef. But $(\ell_0-\ell_{i'})^2=0$, whence it cannot be big.
\end{proof}

\begin{prop}\label{prop:big-necessary}
  Assume that $\deg \tS\le 4$. If $\omega_{\tS}(D)^\vee$ is big and nef, then $D$ is contained in the union of all negative curves.
\end{prop}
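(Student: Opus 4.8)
The plan is to argue by contradiction: suppose $D$ has an irreducible component $C$ that is not a negative curve, so $C^2\ge 0$, and deduce that $-K-D$ (which stands for $\omega_{\tS}(D)^\vee$) is either not big or not nef by placing it below one of the classes occurring in Lemma~\ref{lem:aux-big-nef}. Write $C=c_0\ell_0-\sum_{j\ge 1}c_j\ell_j$ with $c_0=C.\ell_0\ge 0$ (since $\ell_0$ is nef and $C$ a curve) and $c_j=-C.\ell_j\ge 0$, so that $-K-C=(3-c_0)\ell_0+\sum_{j\ge 1}(c_j-1)\ell_j$ and, as $D-C$ is effective, $-K-D\le -K-C$ throughout. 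The Hodge index theorem applied with $\ell_0^2=1$ forces $c_0\ge 1$, and adjunction $0\le p_a(C)=1+\tfrac12(C^2+C.K)$ gives $\sum_{j\ge 1}c_j(c_j-1)\le (c_0-1)(c_0-2)$.

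If $c_0\ge 3$, then $(c_0-3)\ell_0$ is effective, so $-K-D\le -K-C\le \sum_{j\ge 1}(c_j-1)\ell_j$ and $-K-D$ is not big by Lemma~\ref{lem:aux-big-nef}\ref{enum:aux-sum-lj}. If $c_0=2$, then the adjunction inequality forces $c_j\le 1$ for all $j$, hence $\sum_j c_j=\sum_j c_j^2=4-C^2\le 4$, so $\mathcal J:=\{j\ge 1:c_j=0\}$ has at least $r-4\ge 1$ elements because $r=9-\deg\tS\ge 5$; fixing $k\in\mathcal J$ yields $-K-D\le -K-C=\ell_0-\sum_{j\in\mathcal J}\ell_j\le \ell_0-\ell_k$, contradicting Lemma~\ref{lem:aux-big-nef}\ref{enum:aux-l0-li}. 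This leaves the case $c_0=1$, where the same inequalities force $C=\ell_0$ or $C=\ell_0-\ell_i$ for some $i$; now $-K-C$ has $\ell_0$-coefficient $2$ and is too large for Lemma~\ref{lem:aux-big-nef} to bite directly, and this is the heart of the matter.

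In the case $c_0=1$ I would instead exploit bigness and nefness of $-K-D$ to force a $(-2)$-curve into $D$. First, $-K-D$ is effective: $\chi(-K-D)=1+\tfrac12\big((-K-D)^2+(-K-D).(-K)\big)\ge 2$ since $-K-D$ is big and nef, while $h^2(-K-D)=h^0(2K+D)=0$ because $(2K+D).(-K)=-2\deg\tS+D.(-K)<0$; hence $L:=-K-C=(-K-D)+(D-C)$ is effective too. Since a big and nef divisor has positive intersection with every nonzero nef divisor, taking $N=-K$ gives $D.(-K)\le\deg\tS-1$, so $D'':=D-C$ has $D''.(-K)=D.(-K)-C.(-K)\le 0$ — and hence every component of $D''$ is a $(-2)$-curve — except when $C=\ell_0-\ell_i$ and $\deg\tS=4$, where $D''.(-K)\le 1$. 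Now $L^2$ equals $4-r$ if $C=\ell_0$ (which forces $\deg\tS=4$) and $5-r$ if $C=\ell_0-\ell_i$, so $L^2\le 0$. If $L$ were nef, then $(-K-D).L=L^2-L.D''\le 0$ because $L.D''\ge 0$, contradicting $(-K-D).L>0$; otherwise $L^2<0$ already gives a component of the effective divisor $L$ with negative self-intersection. Either way some irreducible $E$ satisfies $L.E<0$, and nefness of $-K-D$ gives $D''.E=L.E-(-K-D).E<0$, so $E$ is a component of $D''$, hence of $D$; a short intersection check with $L$ — using the description of $D''$ to rule out a $(-1)$-curve inside $D''$, and verifying that no $(-1)$-curve meets $L$ negatively in the one exceptional situation — shows $E$ is a $(-2)$-curve, whose class must be $\ell_0-\ell_a-\ell_b-\ell_c$ with $i\notin\{a,b,c\}$ (when $C=\ell_0-\ell_i$), or $2\ell_0-\ell_1-\dots-\ell_r$ (possible only if $C=\ell_0-\ell_i$ and $\deg\tS=3$), or $\ell_i-\ell_b$ (possible only if $C=\ell_0-\ell_i$).

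Finally, for each possibility for $E$ I would extract the contradiction. If $E\sim\ell_0-\ell_a-\ell_b-\ell_c$, then $D\ge C+E$ and $-K-(C+E)=L-E=\ell_0-\sum_{j}\ell_j$ over a nonempty index set, so $-K-D\le\ell_0-\ell_k$ for a suitable $k$, contradicting Lemma~\ref{lem:aux-big-nef}\ref{enum:aux-l0-li}. If $E\sim 2\ell_0-\ell_1-\dots-\ell_r$, then $C+E\sim -K-\ell_i$, so $D\ge -K-\ell_i$ and $-K-D\le\ell_i$, contradicting Lemma~\ref{lem:aux-big-nef}\ref{enum:aux-sum-lj}. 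If $E\sim\ell_i-\ell_b$, then $C+E\sim\ell_0-\ell_b$, so $D\ge\ell_0-\ell_b$ and $-K-D=(2\ell_0-\sum_{j\ne b}\ell_j)-(D-C-E)$, putting us back in the same situation with $b$ in place of $i$ and with strictly fewer components being subtracted; by the chain structure of exceptional curves recalled before Lemma~\ref{lem:aux-big-nef}, the curves $\ell_i-\ell_b,\ \ell_b-\ell_{b'},\dots$ produced this way are distinct, so this descent terminates after finitely many steps in one of the earlier contradictions (or with $-K-D$ equal to a class of nonpositive self-intersection, hence not big). The main obstacle is exactly the $c_0=1$ analysis: no class below those of Lemma~\ref{lem:aux-big-nef} can be read off from $C$ alone, so one must use nefness of $-K-D$ to force an auxiliary $(-2)$-curve into $D$, handle the borderline quartic case where $L^2=0$ rather than $L^2<0$, and organize the finite descent along chains of infinitely near points.
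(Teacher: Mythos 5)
Your reduction to the line case ($c_0\ge 3$ via adjunction and part~(i) of Lemma~\ref{lem:aux-big-nef}, $c_0=2$ via part~(ii)) matches the paper, but your treatment of the remaining case $[C]=\ell_0$ or $\ell_0-\ell_i$ is genuinely different and, in substance, correct. The paper argues geometrically: it uses Lemma~\ref{lem:nef-criterion} to force the whole chain of $(-2)$-curves over the center of the blow-up into $D$ at once, and then distinguishes three cases according to which further negative curves $C$ meets, each landing in Lemma~\ref{lem:aux-big-nef} or in a nef class of nonpositive square. You instead argue purely numerically in $\Pic\tS$: positivity of the big and nef class $-K-D$ against nonzero nef classes shows $L_k=-K-C_k$ cannot be nef, the resulting curve $E_k$ with $L_k.E_k<0$ is forced into $D-C_k$ by nefness of $-K-D$, its class is classified in the lattice, and a descent along the chain classes $\ell_{b_k}-\ell_{b_{k+1}}$ terminates because the $E_k$ are pairwise distinct components of the reduced divisor $D$. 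This buys an argument that never mentions strict transforms, multiplicities, or the order of the blow-ups, at the price of needing the complete list of $(-2)$-classes; the paper's version is shorter because Lemma~\ref{lem:nef-criterion} does that bookkeeping geometrically. Note also that your Riemann--Roch paragraph (effectivity of $-K-D$) is never actually used: non-nefness of $L_k$ already produces $E_k$, and $D-C_k$ is effective by construction. (Minor slip: $c_j=C.\ell_j$, not $-C.\ell_j$.)

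One concrete repair is needed: your list of possible classes for the $(-2)$-curve $E$ is complete only for $\deg\tS\in\{3,4\}$. For $\deg\tS\le 2$ one must also allow $[E]=2\ell_0-\ell_{a_1}-\cdots-\ell_{a_6}$ with $\{a_1,\dots,a_6\}$ a proper subset of $\{1,\dots,r\}$, and for $\deg\tS=1$ in addition $[E]=3\ell_0-2\ell_a-\ell_{b_1}-\cdots-\ell_{b_7}$; all of these meet $2\ell_0-\sum_{j\ne b}\ell_j$ negatively, so they do occur in your case distinction. Fortunately they are dispatched by your own mechanism even more easily: in these cases $-K-C_k-E$ has $\ell_0$-coefficient at most $0$, hence lies below a class of the form $\sum_j a_j\ell_j$, and part~(i) of Lemma~\ref{lem:aux-big-nef} shows $-K-D$ is not big. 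With that case added, and with the check you only sketch written out (for $\deg\tS=4$ no $(-1)$-class meets $2\ell_0-\sum_{j\ne b}\ell_j$ negatively, so the curve found in the descent is indeed a $(-2)$-curve even when $D-C$ may contain one $(-1)$-curve), your proof is complete.
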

\begin{proof}
  Assume for contradiction that $D$ contains a nonnegative curve $C$, but that
  $-K-D$ is big and nef. In particular, $-K-D^\prime$ is big for all $D^\prime
  \subset D$. Since $C$ is nonnegative, it is the strict transform of a curve
  $C_0$ on $\PP^2$. Then
  \begin{equation*}
    [C]=d \ell_0 - \sum_{i=1}^r a_i \ell_i,
  \end{equation*}
  where $d=\deg C_0$ and $a_i=C.\ell_i$.
  
  We first reduce to the case of $C_0$ being a line. If $d\ge 3$, then $[-K-C] \le  \sum_{1\le i \le r} a_i \ell_i$, which is not big by Lemma~\ref{lem:aux-big-nef}~\ref{enum:aux-sum-lj}.  If $C_0$ is a nondegenerate conic, then $a_1,\dots,a_r\le 1$, since $C_0$ has multiplicity $\le 1$ in all images of the exceptional divisors. Moreover, since $C^2 \ge 0$, at most four of the $a_i$ are nonzero. It follows that $[-K-C]\le \ell_0 - \ell_j$, so $-K-D$ is not big or not nef by Lemma~\ref{lem:aux-big-nef}~\ref{enum:aux-l0-li}.

  So let $C_0$ be a line. As the self-intersection of $C$ is nonnegative, $[C]=\ell_0 - \ell_j$ for some $j$ or $[C]=\ell_0$. In the first case, $C_0$ contains the center $P=\pi_1 \cdots \pi_j(p_j)$ of a blow-up. If $n_P >1$, then $\pi^{-1}(P)$ contains $(-2)$-curves.
  Appealing to Lemma~\ref{lem:nef-criterion}~\ref{enum:cond--2}, the first $(-2)$-curve must be contained in $D$, as must the remaining $(-2)$-curves by repeated applications. Let $E_0$ be the sum of these $(-2)$-curves.  Then $C'=C+E_0\subset D$ is of class $[C']=\ell_0 -\ell_{j'}$, where $\ell_{j'}$ is the class of the final $(-1)$-curve in the chain. If $n_P=1$ or $[C]=\ell_0$, set $E_0=0$ and $C'=C$; in the first case, set $j'=j$; in the latter case, fix an arbitrary $j'$ and note that $[C]\le \ell_0 - \ell_{j'}$. Then $C'$ satisfies the conditions in Lemma~\ref{lem:nef-criterion} for all negative curves in the preimage of $P$ by this construction, and it does the same for all other curves contracted by $\pi$ as it does not meet them.
  For what remains, we distinguish three cases.

  Case 1. The curve $C$ does not meet any of the remaining $(-2)$-curves, and $C.E\le 1$ for all remaining $(-1)$-curves. Then $(-K-C')$ is nef by Lemma~\ref{lem:nef-criterion}. But $(-K-C')^2\le 4 - (r - 1) \le 0$, so it cannot be big.

  Case 2. The curve $C$ meets one of the remaining $(-2)$-curves $E$. Then
  $E\subset D$ by Lemma~\ref{lem:nef-criterion}~\ref{enum:cond--2}.  Since $E$
  is the strict transform of a curve in $\PP^2$, its class satisfies
  $[E]= l_0 -l_{i_1} - l_{i_2} - l_{i_3}$ for some pairwise different
  $i_1, i_2, i_3$ or $[E] \ge 2 \ell_0 + \sum a_i\ell_i$ for some
  $a_i\in\ZZ$. In the first case, $[-K-C-E]\le \ell_0 - \ell_k$ for
  $k\ne i_i,i_2,i_3,j$, and in the second case,
  $[-K-C-E] \le \sum_{1 \le i \le r} a_i\ell_i$. In both cases, $-K-D$ is not
  big or not nef by Lemma~\ref{lem:aux-big-nef}~\ref{enum:aux-l0-li}
  or~\ref{enum:aux-sum-lj}, respectively.

  Case 3. The curve $C$ meets a $(-1)$-curve $E$ with $C.E\ge 2$.  By
  Lemma~\ref{lem:nef-criterion}~\ref{enum:cond--1}, $E\subset D$. As $E$ is
  the strict transform of a curve on $\PP^2$, its class verifies $[E]\ge [F]$
  for a $(-2)$-class $[F]$ of the same shape as in the previous case; hence,
  $-K-D$ is not big or not nef.
\end{proof}

\begin{remark}
  The assumption $\deg \tS \le 4$ in Proposition~\ref{prop:big-necessary} is
  necessary: Let $\tS$ be a smooth del Pezzo surface of degree at least $5$
  that is a blow-up of $\PP^2$ in at most $4$ points in general position. Then
  the strict transform $D$ of a line that meets precisely one of these points
  is an example of a nonnegative curve such that $\omega_{\tS}(D)^\vee$ is big
  and nef.
\end{remark}

\begin{theorem}\label{thm:classification}
  Let $\tS$ be a weak del Pezzo surface of degree $d \le 4$. Precisely the following choices of a reduced effective divisor $D$ make $(\tS,D)$ a weak del Pezzo pair.
  \begin{enumerate}[label = (\roman*)]
    \item The divisor $D$ can be zero.
    \item If $3 \le d \le 4$, then $D$ can consist of all $(-2)$-curves corresponding to one $\Ab$-singularity.
    \item If $d= 4$, then $D$ can consist of a $(-1)$-curve and all $(-2)$-curves corresponding to all singularities on its image in the anticanonical model, provided that those singularities are $\Ab$-singularities and all curves in $D$ form a chain.
  \end{enumerate}
\end{theorem}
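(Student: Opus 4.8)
The plan is to combine the necessary condition from Proposition~\ref{prop:big-necessary} (which confines $D$ to a subdivisor of the union of negative curves) with the nef criterion of Lemma~\ref{lem:nef-criterion} and the structural remarks of Remark~\ref{rmk:A-singularities}, and then to check bigness case by case by a self-intersection computation as in Lemma~\ref{lem:aux-big-nef}. So the argument has two directions: first, that each of the three listed divisors does yield a weak del Pezzo pair; second, that nothing else does.

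\medskip

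\noindent\textbf{Sufficiency.} For (i) with $D=0$ this is the definition of a weak del Pezzo surface, so there is nothing to prove. For (ii), if $D$ is the sum of all $(-2)$-curves $E_1,\dots,E_k$ in a single $\Ab_k$-configuration, then nefness of $-K-D$ follows from Lemma~\ref{lem:nef-criterion}: condition \ref{enum:cond--2} holds because any $(-2)$-curve meeting $D$ is either one of the $E_i$ (hence in $D$) or meets the chain in at most one endpoint with multiplicity one and so is handled by \ref{enum:cond-no-three}; a negative curve $E\notin D$ meets a chain of type $\mathbf{A}$ in at most two components with multiplicity one, so \ref{enum:cond--1} and \ref{enum:cond-no-three} hold. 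Bigness follows because $(-K-D)^2=(-K)^2-2(-K).D+D^2 = d - 0 + (-2)\cdot 1 \cdot$ (number of interior adjacencies in the chain) type computation; more directly, $-K.E_i=0$ and $D.E_i\le 0$ give $(-K-D)$ nef with $(-K-D)^2=(-K)^2+D^2-2D.K$, and since the $E_i$ are $(-2)$-curves arranged in an $\mathbf{A}_k$-chain one gets $D^2=-2$ and $D.K=0$, so $(-K-D)^2=d-2>0$ for $d\ge 3$; nef and positive self-intersection gives big. For (iii), $D$ is a chain consisting of a $(-1)$-curve $E_0$ together with $(-2)$-chains attached at its ends, lying over $\mathbf{A}$-singularities; one checks the three conditions of Lemma~\ref{lem:nef-criterion} for each negative curve $E\notin D$ exactly as above (a negative curve meets a chain in at most two components, each with multiplicity one, and cannot meet a $(-2)$-curve of $D$ unless it lies in $D$ by \ref{enum:cond--2} combined with Remark~\ref{rmk:A-singularities}), and bigness again follows from $(-K-D)^2 = d - 2(-K.D) + D^2$: here $-K.E_0=1$, $-K.E_i=0$, so $-K.D=1$, while $D^2 = (-1) + (\text{two }\mathbf{A}\text{-chains' contribution}) = -1 - 2 + 2(\text{adjacencies}) $, and a short bookkeeping over the chain gives $(-K-D)^2 = d-4 + (\text{positive correction from the }(-1)\text{-curve in the middle}) > 0$ when $d=4$; I would carry out this tally explicitly on the finitely many chain shapes that can occur on a degree-$4$ surface.

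\medskip

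\noindent\textbf{Necessity.} Suppose $(\tS,D)$ is a weak del Pezzo pair with $d\le 4$ and $D\ne 0$. By Proposition~\ref{prop:big-necessary}, $D$ is a sum of negative curves. If some component of $D$ is a $(-2)$-curve over a singularity $Q$, then by Remark~\ref{rmk:A-singularities} all $(-2)$-curves over $Q$ lie in $D$ and $Q$ is an $\mathbf{A}$-singularity; if $D$ additionally contains a $(-1)$-curve $E$, then again by Remark~\ref{rmk:A-singularities} every singularity met by the image of $E$ is an $\mathbf{A}$-singularity and all $(-2)$-curves above it are in $D$. Condition \ref{enum:cond-no-three} of Lemma~\ref{lem:nef-criterion} then forces the curves in $D$ to form a disjoint union of chains; but $D$ having strict normal crossings plus bigness rules out $D$ being a disconnected union of several such chains or a chain that is a full $(-2)$-cycle — here I would argue that any configuration other than those in (ii) and (iii) produces $[-K-D]\le \ell_0-\ell_i$ or $[-K-D]\le\sum a_j\ell_j$, so by Lemma~\ref{lem:aux-big-nef} it fails to be big and nef. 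In particular a $D$ consisting purely of a single $(-1)$-curve is excluded by Proposition~\ref{prop:big-necessary}'s proof (the line case there), and $D$ containing two disjoint chains, or a chain meeting a curve outside $D$ in three or more points, or a $\mathbf{D}$- or $\mathbf{E}$-configuration, all violate \ref{enum:cond-no-three} or drive the self-intersection non-positive. This leaves precisely the three options in the statement.

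\medskip

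\noindent The main obstacle is the necessity direction, and within it the verification that the self-intersection $(-K-D)^2$ is strictly positive exactly in cases (ii) and (iii) and drops to $\le 0$ (or the class ceases to be nef) in all the remaining combinatorial configurations of chains of negative curves. This is a finite but somewhat delicate case analysis over the possible singularity types on weak del Pezzo surfaces of degree $3$ and $4$; I expect to organize it by the degree and the Dynkin type, using Lemma~\ref{lem:aux-big-nef} to dispatch the non-big cases quickly and direct intersection computations to pin down the borderline ones.
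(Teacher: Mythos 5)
Your sufficiency direction and your treatment of connected, acyclic configurations follow essentially the paper's route: nefness is checked with Lemma~\ref{lem:nef-criterion}, and bigness via the self-intersection $(-K-D)^2 = d + 2(e-v) - N_1$ (for a chain this gives $d-2>0$ in case (ii) and $d-3>0$ in case (iii)), so these parts are fine modulo the bookkeeping you defer. (One small slip: in (ii) you argue that a $(-2)$-curve outside $D$ may meet $D$ in one point ``and is handled by \ref{enum:cond-no-three}''; that would in fact violate Lemma~\ref{lem:nef-criterion}~\ref{enum:cond--2}, which has no such exception --- the verification goes through only because $(-2)$-curves over distinct singularities are disjoint, so the situation never occurs.) The genuine gap lies in the necessity direction: Lemma~\ref{lem:nef-criterion}~\ref{enum:cond-no-three} only bounds the weighted degree of each vertex of the intersection graph by $2$, so besides paths you must exclude \emph{cycles} (including double edges and triangles of negative curves). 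Your proposed tool --- that every excluded configuration satisfies $[-K-D]\le \ell_0-\ell_i$ or $[-K-D]\le\sum a_j\ell_j$, so that Lemma~\ref{lem:aux-big-nef} applies --- is unsubstantiated and cannot work for cycles containing $(-1)$-curves: for a cycle $e=v$, hence $(-K-D)^2=d-N_1$, which is strictly positive for, e.g., three pairwise intersecting $(-1)$-curves on a degree-$4$ surface ($(-K-D)^2=1$); since a nef class of positive square is big, no self-intersection count or domination by a non-big class can dispose of this case.

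The paper excludes the cyclic configurations by separate geometric arguments that your sketch does not supply: for $N_1=0$, the intersection graphs of $(-2)$-curves are ADE diagrams and contain no cycles; for $N_1=1$, the sum of the $(-2)$-curves in $D$ is a $(-2)$-class meeting a $(-1)$-curve with multiplicity $\ge 2$, which after a Weyl-group reduction would have to be $3\ell_0-2\ell_1-\ell_2-\cdots-\ell_8$, a class that does not exist in degree $\ge 2$; for $N_1=2$, the anticanonical model would contain two lines meeting with multiplicity $2$, which is impossible; and for $N_1=3$, $d=4$, the three lines must pass through a common point, that point is forced to be a singularity, and blowing it up shows each $(-1)$-curve meets a $(-2)$-curve above it, so those $(-2)$-curves lie in $D$ and one of them meets at least three other components of $D$, contradicting Lemma~\ref{lem:nef-criterion}~\ref{enum:cond-no-three}. (By contrast, the disconnected acyclic case needs no appeal to Lemma~\ref{lem:aux-big-nef}: there $e\le v-2$ gives $(-K-D)^2\le d-4-N_1\le 0$.) Without arguments covering these cyclic cases --- above all the triangle of three $(-1)$-curves in degree $4$ --- the classification is not established.
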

\begin{proof}
  Let $D$ be a reduced effective divisor such that $-K-D$ is big and nef. By Proposition~\ref{prop:big-necessary},  $D=\sum E_i$ has to be supported on negative curves.
  Consider the complete subgraph $G$ of the Dynkin diagram on the vertices corresponding to components of $D$.  By Lemma~\ref{lem:nef-criterion}~\ref{enum:cond-no-three}, each of its connected components is a path or a cycle.
  Let $N_1$ be the number of $(-1)$-curves in $D$, and $N_2$ be the number of $(-2)$-curves. Then $v=N_1+N_2$ is the number of vertices of $G$, and denote by $e$ its number of edges.

  The self-intersection of the log-anticanonical divisor is
  \[
    (-K-D)^2 = K^2 + \sum_i E_i^2 + 2 \sum_i E_i.K + 2\sum_{i<j} E_i.E_j.  
  \]
  As $-K.E$ is zero for $(-2)$-curves and $1$ for $(-1)$-curves, we get
  \begin{equation}\label{eq:self-intersection}
    (-K-D)^2 =  d + 2(e-v) - N_1.
  \end{equation}
  Since $-K-D$ is big and nef, this self-intersection must be positive.

  If $G$ is connected and not a cycle, then $e=v-1$, so $d -2-N_1 >0$.
  In case $d=4$, this leaves us with $N_1 \le 1$, in case $d=3$ with $N_1 = 0$, and in case $d\le 2$ with an immediate contradiction. In each case, the resulting divisors satisfy the asserted description using Remark~\ref{rmk:A-singularities} and that the graph is a path. 

  It remains to prove that $G$ has to be connected and not a cycle. If $G$ is not connected and does not contain a cycle, then $(-K-D)^2 = d-4-N_1 \le 0$, so $-K-D$ cannot be big, leaving only the case of graphs $G$ containing a cycle, in which case $d-N_1>0$ for $-K-D$ to be big.

  For $N_1=0$, we note that only Dynkin diagrams of type $\Ab$, $\mathbf D$, and $\mathbf E$ appear as intersection graphs of $(-2)$-curves, and these do not contain double edges, nor more general cycles.

  If $N_1=1$, then $d\ge 2$. The sum $E_2$ of the $(-2)$-curves in $D$ forms a
  $(-2)$-class, since $E_2^2 = -2N_2 + 2(s-1)=-2$ and $-K.E_2=0$.  As the Weyl
  group acts transitively on $(-1)$-curves and leaves the intersection pairing
  invariant, we can assume that $[E_1]=\ell_1$. Now $\ell_1.[E_2]\ge 2$, and
  so the $(-2)$-class needs to have the form
  $3\ell_0 - 2\ell_1 - \ell_2 - \cdots - \ell_8$. But such a class does not exist if
  $d\ge 2$ (cf.~\cite[Theorem 25.5.3]{MR833513}).

  If $N_1 = 2$, then $d\ge 3$. In this case, the anticanonical model contracts $(-2)$-curves and maps $(-1)$-curves to lines. The resulting two lines then need to intersect with multiplicity $2$, an impossibility.

  Finally, if $N_1=3$, then $d = 4$. In this case, the anticanonical model $\phi\colon \tS\to S = Q_1 \cap Q_2 \subset \PP^4$ is a (possibly singular) intersection of two quadrics, still contracting all $(-2)$-curves and mapping all $(-1)$-curves to lines. The resulting three lines need to intersect pairwise. If they were contained in a plane $P$, this plane would intersect $Q_1$ in three lines, an impossibility. So the three lines intersect in a point $Q$. The tangent space at $Q$ needs to contain each plane containing two of these lines, whence $Q$ is singular.
  Then  $\tS\to S$ factors through the blow-up $Y$ of $\PP^4$ in $Q$. The strict transforms of the lines do not intersect on $Y$, and thus the $(-1)$-curves on $\tS$ do not intersect. It follows that each of them intersects a $(-2)$-curve above $Q$. Hence, the $(-2)$-curves are contained in $D$, and at least one of them needs to intersect three other negative curves in $D$. Now, Lemma~\ref{lem:nef-criterion}~\ref{enum:cond-no-three} implies that $-K-D$ cannot be nef.
    
  Conversely, if $D$ is one of the divisors in the statement, then it is nef by Lemma~\ref{lem:nef-criterion}, and its self-intersection is positive by~\eqref{eq:self-intersection}; hence, it is also big.
\end{proof}

\section{Passage to a universal torsor}\label{sec:torsor}

As in the introduction, let $S \subset \PP^4_\QQ$ be the singular quartic del
Pezzo surface defined by the equations~\eqref{eq:def_eq}. 
By~\cite{MR3180592,MR3939263}
(but using the notation and numbering of~\cite[Section~8]{MR2520770}), a Cox
ring of its minimal desingularization
$\tS$ is
\begin{equation}\label{eq:cox}
  R=\QQ[\e_1,\dots,\e_9]/(\e_1\e_9+\e_2\e_8+\e_4\e_5^3\e_6^2\e_7)
\end{equation}
with grading 
\begin{equation}\label{eq:Pic_degrees}
  \begin{aligned}
    \deg\e_1&=\ell_5,\quad \deg\e_2 =\ell_4,\quad \deg\e_3=\ell_0-\ell_1-\ell_4-\ell_5,\\
    \deg\e_4&=\ell_1-\ell_2,\quad \deg\e_5=\ell_3,\quad \deg\e_6=\ell_2-\ell_3,\\
    \deg\e_7&=\ell_0-\ell_1-\ell_2-\ell_3,\quad \deg\e_8=\ell_0-\ell_4,\quad \deg\e_9=\ell_0-\ell_5
  \end{aligned}
\end{equation}
for a certain basis $\ell_0,\dots,\ell_5$ of $\Pic\tS$. See
Figure~\ref{fig:dynkin-diagram} for the dual graph of the divisors $E_i$
corresponding to $\e_i$.
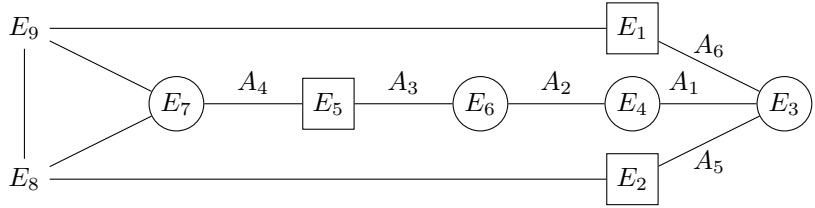
\begin{figure}[ht]
  \begin{center}
    \begin{tikzpicture}[square/.style={regular polygon,regular polygon sides=4}]
      \node (E8) at (0,-1) {$E_8$};
      \node (E9) at (0, 1) {$E_9$};
      \node[circle, draw, inner sep=.5ex] (E7) at (2, 0) {$E_7$};
      \node[square, draw, inner sep=.2ex] (E5) at (4, 0) {$E_5$};
      \node[circle, draw, inner sep=.5ex] (E6) at (6, 0) {$E_6$};
      \node[circle, draw, inner sep=.5ex] (E4) at (8, 0) {$E_4$};
      \node[circle, draw, inner sep=.5ex] (E3) at (10, 0) {$E_3$};
      \node[square, draw, inner sep=.2ex] (E1) at (8, 1) {$E_1$};
      \node[square, draw, inner sep=.2ex] (E2) at (8,-1) {$E_2$};

      \draw (E8) -- (E9);
      \draw (E8) -- (E7);
      \draw (E9) -- (E7);
      \draw (E9) -- (E1);
      \draw (E8) -- (E2);
      \draw (E3) -- (E1) node [midway, above] {$A_6$};
      \draw (E3) -- (E2) node [midway, below] {$A_5$};
      \draw (E7) -- (E5) node [midway, above] {$A_4$};
      \draw (E5) -- (E6) node [midway, above] {$A_3$};
      \draw (E6) -- (E4) node [midway, above] {$A_2$};
      \draw (E4) -- (E3) node [near start, above] {$A_1$};
    \end{tikzpicture}
    \caption{Configuration of the divisors $E_i$ and the faces $A_i$ of the
      Clemens complexes.
      The $(-1)$-curves are represented by squares and the $(-2)$-curves by circles.}\label{fig:dynkin-diagram}
  \end{center}
\end{figure}

The minimal desingularization $\tS$ can be described as a certain sequence of five
iterated blow-ups of $\PP^2_\QQ$ in rational points~\cite{MR2520770}: first blow up three points $P_1$, $P_2$, $P_4$ on a line $l_3$, resulting in exceptional curves $E_1$, $E_2$, and $E_4'$; then blow up the intersection of $E_4'$ with a line $l_7$, resulting in an exceptional curve $E_6'$; then blow up the intersection of $E_6'$ with the strict transform of $l_7$, resulting in an exceptional curve $E_5$. With this description, $E_3$ is the strict transform of $l_3$, $E_4$ that of $E_4'$, $E_6$ that of $E_6'$, $E_7$ that of $l_7$, $E_8$ that of a general line through $P_2$, and $E_9$ that of a line through $P_1$ such that $E_7,E_8,E_9$ meet in one point, recovering the above grading using a basis as before Lemma~\ref{lem:aux-big-nef}.

With a point of view coming from $S$, the divisor $D_1=E_7$ is the $(-2)$-curve on $\tS$ above the singularity
$Q_1$ on $S$, the divisor $D_2=E_3+E_4+E_6$ is the sum of the $(-2)$-curves
above $Q_2$, the divisor $D_3=D_1+D_2=E_3+E_4+E_6+E_7$ is the sum of all
$(-2)$-curves, and $E_5,E_2,E_1$ are the $(-1)$-curves that are the strict
transforms of the three lines $L_1,L_2,L_3$ on $S$ as in~\eqref{eq:lines}, respectively,
while $E_8$ and $E_9$ correspond to the two further generators of the Cox ring.
The divisors
\begin{equation*}
  D_4=E_3+\cdots+E_7,\quad D_5=E_2+E_3+E_4+E_6,\quad \text{and} \quad D_6=E_1+E_3+E_4+E_6
\end{equation*}
lie above the lines $L_1$, $L_2$, and $L_3$, respectively.
Since $V \subset S$ is the complement of the lines, which contain the
singularities, its preimage $\tV \subset \tS$ is the complement of the
negative curves $E_1,\dots,E_7$.  

The irrelevant ideal of $R$ is $I_\irr=\prod(\e_i,\e_j)$, where the product
runs over all pairs $i < j$ such that there is no edge between $E_i$ and $E_j$
in Figure~\ref{fig:dynkin-diagram}.  The sections
\begin{equation}\label{eq:antican_sections}
  L_0=\{\e_2\e_3\e_4\e_5\e_6\e_7\e_8,\
  \e_1^2\e_2^2\e_3^3\e_4^2\e_6,\
  \e_1\e_2\e_3^2\e_4^2\e_5^2\e_6^2\e_7,\
  \e_3\e_4^2\e_5^4\e_6^3\e_7^2,\
  \e_7\e_8\e_9
  \}
\end{equation}
have anticanonical degree and define the morphism $\rho: \tS\to S$.

As in~\cite[Proposition~4.1(i)]{MR3552013}, let $\tSs$ be the integral model
defined by the corresponding sequence of blow-ups of $\PP^2_\ZZ$, and recall
$\tUs_i=\Ss-\overline{D_i}$. Consider the open subscheme $\Ys$ of the spectrum
of
\begin{equation*}
R_\ZZ=\ZZ[\e_1,\dots,\e_9]/(\e_1\e_9+\e_2\e_8+\e_4\e_5^3\e_6^2\e_7)
\end{equation*}
defined as the complement of $\VV(I_\irr\cap
R_\ZZ)$. By~\cite[Proposition~4.1(ii)]{MR3552013}, $\Ys$ is a
$\GGmZ^6$-torsor over $\tSs$ via a morphism $\pi:\Ys\to\tSs$; here,
the action of $\GGmZ^6$ on $\Ys$ is given by the degrees of the
coordinates $(\e_1,\dots,\e_9)$ in $\Pic \tS \cong \ZZ^6$ in
\eqref{eq:Pic_degrees}; see \cite[Construction~3.1]{MR3552013} for details.  This torsor defines an explicit
parametrization of integral points by lattice points:

\begin{lemma}\label{lem:torsor}
  Let $i\in\{1,\dots,6\}$, and $\Ys_i=\pi^{-1}(\tUs_i)\subset \Ys$. Then $\pi\colon \Ys_i\to \tUs_i$ is a $\GGmZ^6$-torsor. This morphism induces a $2^6$-to-$1$-correspondence
  \[
    \Ys_i(\ZZ)\cap \pi^{-1}(\tV)(\QQ) \to \tUs_i(\ZZ)\cap \tV(\QQ),
  \]
  and we have
  \[
    \Ys_i(\ZZ)\cap \pi^{-1}(\tV)(\QQ) = 
    \where{
      \ee \in \ZZ^9
    }{
      \text{\eqref{eq:torsor},~\eqref{eq:surface-gcd},~\eqref{eq:pm1} hold},\ \e_1\cdots\e_7 \ne 0,\\
     },
  \]
  where
  \begin{align}
    &\e_1\e_9+\e_2\e_8+\e_4\e_5^3\e_6^2\e_7=0,\label{eq:torsor}\\
    &\gcd(\e_i,\e_j)=1 \qquad \text{if $E_i$ and $E_j$ do not share an edge in
    Figure~\ref{fig:dynkin-diagram}}, and\label{eq:surface-gcd}\\
    &|\e_j| = 1 \qquad \text{ if } E_j \subset \abs{D_i}.\label{eq:pm1}
  \end{align}
\end{lemma}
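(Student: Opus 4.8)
The plan is to deduce the statement from the general torsor formalism of \cite{MR3552013} together with a careful bookkeeping of the integrality condition on $\tUs_i$. First I would recall that $\pi\colon\Ys\to\tSs$ is a $\GGmZ^6$-torsor, so its restriction to the preimage $\Ys_i=\pi^{-1}(\tUs_i)$ of any open subscheme $\tUs_i\subset\tSs$ is again a $\GGmZ^6$-torsor; this is immediate since torsors pull back to torsors along open immersions. The fact that $\Pic\tS\to\Pic\tS_{\Qbar}$ is an isomorphism and that the pair is split (stated in the introduction) ensures that the torsor is \emph{universal} and that the descent argument runs cleanly over $\QQ$, so that $\GGmZ^6(\ZZ)=\{\pm1\}^6$ acts simply transitively on the fibers of $\Ys_i(\ZZ)\to\tUs_i(\ZZ)$ over the image; this accounts for the $2^6$-to-$1$ correspondence. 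Intersecting with $\pi^{-1}(\tV)(\QQ)$ on the source corresponds to intersecting with $\tV(\QQ)$ on the target, since $\pi^{-1}(\tV)=\Ys\cap\pi^{-1}(\tV)$ is $\GGmZ^6$-invariant and $\tV$ is the complement of the negative curves $E_1,\dots,E_7$.

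Next I would identify the set $\Ys_i(\ZZ)\cap\pi^{-1}(\tV)(\QQ)$ explicitly as a subset of $\ZZ^9$. An integral point of $\Ys$ is by definition a point of $\Spec R_\ZZ$ lying in the complement of $\VV(I_\irr\cap R_\ZZ)$, i.e.\ a tuple $\ee=(\e_1,\dots,\e_9)\in\ZZ^9$ satisfying the torsor equation \eqref{eq:torsor} and such that for each minimal generator $(\e_i,\e_j)$ of $I_\irr$ the ideal $(\e_i,\e_j)$ is the unit ideal in $\ZZ$; the latter is exactly the coprimality condition \eqref{eq:surface-gcd}, ranging over pairs $i<j$ with no edge between $E_i$ and $E_j$ in Figure~\ref{fig:dynkin-diagram}. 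The condition $\ee\in\pi^{-1}(\tV)(\QQ)$, i.e.\ that the image avoids $E_1,\dots,E_7$, translates (after using that $E_j$ is cut out on $\tS$ by $\e_j=0$ in Cox coordinates) into $\e_1\cdots\e_7\ne0$. Finally, the extra constraint distinguishing $\Ys_i$ from $\Ys$ is that the image lies in $\tUs_i=\Ss\setminus\overline{D_i}$, equivalently avoids each component $E_j\subset|D_i|$; since $E_j$ is defined by $\e_j=0$, this is the condition that the coordinate $\e_j$ be a \emph{unit} whenever $E_j\subset|D_i|$, and the only units in $\ZZ$ are $\pm1$, giving \eqref{eq:pm1}.

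The main point, and the step I expect to require the most care, is justifying that avoiding the divisor $\overline{D_j}$ on the \emph{integral} model $\tSs$ (not just over $\QQ$) is equivalent to the coordinate being a unit at every prime, rather than merely nonzero. Here I would argue scheme-theoretically: the preimage $\pi^{-1}(\overline{E_j})\subset\Ys$ is the vanishing locus of $\e_j$ (the corresponding section of the relevant line bundle pulls back to the coordinate function $\e_j$ on the torsor), so a point $\ee\in\Ys(\ZZ)$ maps into $\tUs_i$ if and only if the section $\e_j$ is nowhere-vanishing on $\Spec\ZZ$ for all $j$ with $E_j\subset|D_i|$, i.e.\ if and only if $\e_j\in\ZZ^\times=\{\pm1\}$; this is where the pairs $(\tS,D_i)$ being split is used, guaranteeing that the irreducible components $E_j$ of $D_i$ remain irreducible and individually cut out by single Cox coordinates over $\ZZ$. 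Conversely, any $\ee\in\ZZ^9$ satisfying \eqref{eq:torsor}, \eqref{eq:surface-gcd}, \eqref{eq:pm1} and $\e_1\cdots\e_7\ne0$ lies in $\Ys(\ZZ)$ (the coprimality conditions place it in the complement of $\VV(I_\irr)$) and its image avoids $\overline{D_i}$ and the negative curves, hence lies in $\tUs_i(\ZZ)\cap\tV(\QQ)$; putting both inclusions together yields the claimed description.
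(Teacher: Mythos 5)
Your proposal is correct and follows essentially the same route as the paper: restrict the torsor to the open subschemes, translate integrality on the complement of $\VV(I_\irr)$ into the coprimality conditions~\eqref{eq:surface-gcd}, translate avoiding $\overline{D_i}$ at every prime into the unit conditions~\eqref{eq:pm1}, and identify $\pi^{-1}(\tV)$ with $\e_1\cdots\e_7\ne 0$. Your scheme-theoretic justification that $\pi^{-1}(\overline{E_j})=\VV(\e_j)$ and the remark on the $\{\pm1\}^6$-action merely spell out what the paper leaves implicit.
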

\begin{proof}
  Since $\pi$ is a $\GGmZ^6$-torsor, so are its restrictions to the open
  subschemes $\tUs_i$.  
  Integral points $\ee\in\Spec R_\ZZ$ are lattice points
  $(\e_1,\dots,\e_9)\in\ZZ^9$ satisfying the equation in the Cox ring. Such a
  point is integral on the complement of
  $\VV(I_\irr)=\bigcup \VV(\e_i,\e_j)$---the union running over all $i,j$ which
  do not share an edge in Figure~\ref{fig:dynkin-diagram}---if it does not
  reduce to any of the $\VV(\e_i,\e_j)$ for any prime, that is,
  if~\eqref{eq:surface-gcd} holds.
  
  Integral points on $\Ys_1\subset \Ys$ are precisely those which do not
  reduce to $\pi^{-1}(E_7)=\VV(\e_7)$ at any place; that is, they are those points satisfying
  $\e_7\in\{\pm 1\}$. Analogously, integral points on $\Ys_2$ are those
  satisfying $\e_3,\e_4,\e_6\in\{\pm 1\}$, integral points on $\Ys_3$ are
  those satisfying $\e_3,\e_4,\e_6,\e_7\in\{\pm 1\}$, and similarly for
  $\Ys_4, \Ys_5, \Ys_6$.  The preimage of $\tV$
  in the universal torsor is the complement of $\e_1\cdots\e_7=0$.
\end{proof}

We now turn to studying the log-anticanonical bundles and the height functions
associated with them. Recall that the case
$D_6$ can be reduced to $D_5$ by symmetry as in~\eqref{eq:symmetry}.

\begin{lemma}\label{lem:big-and-nef}
  The only nonzero reduced effective divisors $D\subset \tS$ such that
  $\omega_{\tS}(D)^\vee$ is big and nef are $D_i$ for $i \in \{1,2,4,5,6\}$.
  Consider the sets 
  \begin{align*}
    &M_1= \{\e_2\e_3\e_4\e_5\e_6\e_8,\e_1\e_2\e_3^2\e_4^2\e_5^2\e_6^2,\e_3\e_4^2\e_5^4\e_6^3\e_7,\e_8\e_9\},\\
    & M_2 = \{\e_2\e_5\e_7\e_8,\e_1^2\e_2^2\e_3^2\e_4,
      \e_4\e_5^4\e_6^2\e_7^2\},\\
    &M_4=\{\e_2\e_8, \e_1\e_2\e_3\e_4\e_5\e_6, \e_4\e_5^3\e_6^2\e_7\}, \quad \text{and}\\ 
    &M_5=\{\e_5\e_7\e_8,\e_1^2\e_2\e_3^2\e_4,\e_1\e_3\e_4\e_5^2\e_6\e_7\}
  \end{align*}
  of monomials in the Cox ring $R$ of degree $\omega_{\tS}(D_i)^\vee$ for $i=1,2,4,5$,
  respectively. For $\ee\in\ZZ^9$ satisfying~\eqref{eq:surface-gcd}, none of these sets can vanish simultaneously modulo a prime $p$.
  The respective log-anticanonical bundles are base point free.

  The log-anticanonical bundle $\omega_{\tS}(D_3)^\vee$ is big, but not
  nef, whence not base point free. It has a representation
  $\omega_{\tS}(D_3)^\vee\cong \Ls_1 \otimes \Ls_2^\vee$ as a quotient of the nef
  bundles $\Ls_1$ and $\Ls_2$ whose sections are elements of degree
  $4\ell_0-\ell_1-\ell_2$ and $3\ell_0-\ell_1-\ell_2-\ell_3$ in the Cox ring,
  respectively.
  Consider the sets of Cox ring elements
  \begin{align*}
      L_2  &= 
         \e_1\e_2L_0 
      \cup \{\e_4^2 \e_5^6 \e_6^4 \e_7^3\} && \text{of degree $\Ls_2$,}\\
      L_3 &= \{\e_2\e_5\e_8, \e_1\e_2\e_3\e_4\e_5^2\e_6, \e_4 \e_5^4 \e_6^2
            \e_7\} && \text{of degree $\omega_{\tS}(D_3)^\vee$, and}\\
      L_1 &= L_2 L_3 \cup
        \{\e_1^3 \e_2^3 \e_3^2 \e_4 \e_8 \e_9\} && \text{of degree $\Ls_1$.}
  \end{align*}
  Then neither $L_1$ nor $L_2$ can vanish simultaneously modulo a prime $p$.
\end{lemma}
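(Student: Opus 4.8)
The plan is to establish in turn the classification of nonzero big and nef log-anticanonical divisors on $\tS$, the properties of $\omega_{\tS}(D_3)^\vee$ together with its decomposition into nef bundles, and finally the common statement about the monomial sets, which amounts to saying that certain explicit linear subsystems have no base points. The first is a direct application of Theorem~\ref{thm:classification} to our surface: since $\deg\tS=4$, it lists all admissible $D$, and it remains to identify them concretely. The negative curves of $\tS$ are $E_1,\dots,E_7$: the $(-1)$-curves $E_1,E_2,E_5$ (strict transforms of the lines $L_3,L_2,L_1$) and the $(-2)$-curves $E_3,E_4,E_6$ (the chain over the $\Ab_3$-point $Q_2$) and $E_7$ (over the $\Ab_1$-point $Q_1$), while one checks from~\eqref{eq:Pic_degrees} that $E_8$ and $E_9$ have self-intersection $0$, so they cannot lie in $D$ by Proposition~\ref{prop:big-necessary}. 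Case~(ii) of Theorem~\ref{thm:classification} then yields exactly $D_1=E_7$ and $D_2=E_3+E_4+E_6$; for case~(iii) one reads off from Figure~\ref{fig:dynkin-diagram} which $(-2)$-curves are met by each $(-1)$-curve (equivalently, which singularities lie on its image) and checks that the union is a chain: $E_5$ meets $E_6$ and $E_7$, forcing $D_4=E_3+E_4+E_5+E_6+E_7$ with chain $E_3-E_4-E_6-E_5-E_7$, while $E_2$ (resp.\ $E_1$) meets only $E_3$ among the $(-2)$-curves, giving $D_5=E_2+E_3+E_4+E_6$ (resp.\ $D_6=E_1+E_3+E_4+E_6$). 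Hence the nonzero possibilities are precisely $D_1,D_2,D_4,D_5,D_6$, and in particular $D_3=D_1+D_2$ does not occur.

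For $D_3$ I would compute in $\Pic\tS$ from~\eqref{eq:Pic_degrees} that $\omega_{\tS}(D_3)^\vee=-K-[D_3]=\ell_0+\ell_3=\ell_0+[E_5]$. As $\ell_0$ is nef and big (being the pull-back under the morphism $\pi\colon\tS\to\PP^2$ of the ample class of a line) and $E_5$ is effective, $\omega_{\tS}(D_3)^\vee$ is big; as $(\ell_0+\ell_3).E_5=\ell_0.\ell_3+\ell_3^2=-1<0$, it is not nef, hence---base-point-free line bundles being nef---not base point free (consistently with its absence from the classification above). For the decomposition, let $\Ls_1,\Ls_2$ be the line bundles with classes $4\ell_0-\ell_1-\ell_2$ and $3\ell_0-\ell_1-\ell_2-\ell_3$; since $\Pic\tS$ is torsion-free and the difference of these classes is $\ell_0+\ell_3$, one has $\omega_{\tS}(D_3)^\vee\cong\Ls_1\otimes\Ls_2^\vee$, and $\Ls_1,\Ls_2$ are nef by computing their intersection numbers with each of $E_1,\dots,E_7$, which generate the Mori cone of the weak del Pezzo surface $\tS$.

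For the monomial statements, the uniform strategy is as follows. For each $L\in\{\omega_{\tS}(D_1)^\vee,\omega_{\tS}(D_2)^\vee,\omega_{\tS}(D_4)^\vee,\omega_{\tS}(D_5)^\vee,\Ls_1,\Ls_2\}$ (the case $D_6$ following from the symmetry~\eqref{eq:symmetry}), one first verifies that the associated monomials have the claimed degree, so that they lie in $R_{[L]}=H^0(\tS,L)$, and it then suffices to show that their common vanishing locus inside $\Ys=\Spec R_\ZZ\setminus\VV(I_\irr\cap R_\ZZ)$ is empty. This exhibits a base-point-free subsystem of $\abs{L}$, which proves base-point-freeness of $\omega_{\tS}(D_i)^\vee$ for $i\in\{1,2,4,5,6\}$; and it gives the arithmetic statement, since a reduction modulo $p$ of an $\ee\in\ZZ^9$ satisfying~\eqref{eq:surface-gcd} with all these monomials vanishing modulo $p$ would be a point of $\Ys$ over $\FF_p$ in that locus. (Since $\omega_{\tS}(D_3)^\vee$ is not base point free, for $D_3$ the assertion is made only for $L_1$ and $L_2$, not $L_3$.)

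The remaining emptiness claim is a finite case analysis, uniform in $p$ (and covering the geometric statement over $\QQ$ as well): condition~\eqref{eq:surface-gcd} says exactly that for every pair $\{j,k\}$ not joined by an edge in Figure~\ref{fig:dynkin-diagram} the coordinates $\e_j,\e_k$ do not both vanish, so whenever the vanishing of some monomial forces $\e_j=0$, all coordinates attached to non-neighbours of $E_j$ become units; propagating this through the remaining monomials of the set closes off every branch. For instance, for $M_4=\{\e_2\e_8,\ \e_1\e_2\e_3\e_4\e_5\e_6,\ \e_4\e_5^3\e_6^2\e_7\}$, vanishing of $\e_2\e_8$ forces $\e_2=0$ or $\e_8=0$; in the first case $\e_4,\e_5,\e_6,\e_7$ are units (as $E_2$ meets only $E_3,E_8$), so $\e_4\e_5^3\e_6^2\e_7$ is a unit, a contradiction; in the second case $\e_1,\e_3,\e_4,\e_5,\e_6$ are units, so $\e_4\e_5^3\e_6^2\e_7$ forces $\e_7=0$ and $\e_1\e_2\e_3\e_4\e_5\e_6$ forces $\e_2=0$, but $\{2,7\}$ is a non-edge, again a contradiction. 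For $L_2=\e_1\e_2 L_0\cup\{\e_4^2\e_5^6\e_6^4\e_7^3\}$ and $L_1=L_2L_3\cup\{\e_1^3\e_2^3\e_3^2\e_4\e_8\e_9\}$ one additionally uses that the anticanonical sections $L_0$ have no common zero on $\Ys$ ($\rho$ being a morphism) and that the common zero locus of a set of products is the union of the zero loci of the two factor sets; the same bookkeeping then applies, as it does to $M_1,M_2,M_5$. I expect this combinatorial step---running the handful of case analyses while keeping track of the incidence graph of Figure~\ref{fig:dynkin-diagram}---to be the only real work; everything else is an appeal to Theorem~\ref{thm:classification} and elementary bookkeeping in $\Pic\tS$.
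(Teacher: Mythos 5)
Your proposal is correct and, at its core, follows the same route as the paper: the list of admissible $D$ is read off from Theorem~\ref{thm:classification} (your explicit enumeration via the seven negative curves $E_1,\dots,E_7$ and the chain condition is exactly what the paper leaves implicit), non-nefness of $\omega_{\tS}(D_3)^\vee$ is detected on $E_5$, and the non-vanishing of the monomial sets is a finite case analysis driven solely by the pairwise conditions~\eqref{eq:surface-gcd} encoded in Figure~\ref{fig:dynkin-diagram}; your worked case $M_4$ is correct (and in fact more complete than the paper's one-line treatment of that case), and your extra ingredients for $L_1,L_2$ (non-vanishing of $L_0$, and reducing $L_2L_3$ to $L_3$ once $L_2$ is handled) are precisely the paper's. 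You differ in two sub-arguments, both fine and arguably cleaner: you prove bigness of $\omega_{\tS}(D_3)^\vee$ directly from $[\omega_{\tS}(D_3)^\vee]=\ell_0+[E_5]$ (big plus effective), which the paper does not spell out, and you get nefness of $\Ls_1,\Ls_2$ by intersecting with the negative curves generating the effective cone, whereas the paper deduces nefness from base-point-freeness of the exhibited sections; your route needs the (standard, and true here) fact that $\Eff\tS$ is generated by $E_1,\dots,E_7$, while the paper's recycles the same case analysis over $\Qbar$.

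One small imprecision to fix in the write-up: the reduction modulo $p$ of an arbitrary $\ee\in\ZZ^9$ satisfying~\eqref{eq:surface-gcd} is \emph{not} in general a point of $\Ys(\FF_p)$, since the lemma does not assume the torsor equation~\eqref{eq:torsor}; it is only a point of $\AAA^9_{\FF_p}\setminus\VV(I_\irr)$. This costs you nothing, because the case analysis you actually run (and the paper's) never uses the torsor equation, so it proves emptiness of the common vanishing locus in $\AAA^9\setminus\bigcup\VV(\e_i,\e_j)$, which yields both the mod-$p$ statement and, after restricting to the torsor, base-point-freeness on $\tS$; you should just phrase the reduction step accordingly. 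Similarly, for the mod-$p$ non-vanishing of $L_0$ your parenthetical ``$\rho$ being a morphism'' only gives the statement over $\Qbar$; either invoke the integral model or note that the same graph-theoretic case analysis covers $L_0$ as well.
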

\begin{proof}
  The first statement is a special case of Theorem~\ref{thm:classification}.
  
  For the first set, assume that $p\mid \eta_8\eta_9$ for a prime $p$. Then $p\nmid \e_3\cdots\e_6$,
  since the corresponding divisors $E_3,\dots,E_6$ share an edge
  with neither $E_8$ nor $E_9$ in Figure~\ref{fig:dynkin-diagram},
  while at most one of
  $\e_1,\e_2,\e_7$ can be divisible by $p$. Hence, the second or third section is not divisible by $p$.

  For the second set, assume that $p\mid \e_2\e_5\e_7\e_8$. If $p\mid \e_5\e_7$, then $p\nmid  \e_1^2\e_2^2\e_3^2\e_4$;
  if $p\mid \e_2\e_8$ and $p\nmid\e_7$, then $p\nmid \e_4\e_5^3\e_6^2\e_7$.
  Regarding the case $i=4$, if $p\mid \e_2\e_8$, then $p\nmid \e_4\e_5^3\e_6^2\e_7$.
  For  $i=5$, assume $p\mid \e_5\e_7\e_8$. If $p\mid \e_5\e_7$, then $p\nmid \e_1^2\e_2\e_3^2\e_4$;
  if $p\mid \e_8$ and $p\nmid \e_7$, then $p\nmid \e_1\e_3\e_4\e_5^2\e_6\e_7$. 

  Turning to $L_2$, we know that the anticanonical sections
  in~\eqref{eq:antican_sections} cannot be divisible by $p$ simultaneously,
  so for all sections in $L_2$ to
  be be divisible by $p$ simultaneously, $p\mid \e_1\e_2$. But then
  $p\nmid \e_4^2 \e_5^6 \e_6^4 \e_7^3$.
  
  Lastly, assume that the
  monomials in $L_1$ are divisible by $p$, so that $p$ in particular divides all monomials in $L_2L_3$.
  Since $p$ cannot divide all monomials in $L_2$, it has to divide all monomials in $L_3$.
  If follows that $p\mid \e_2\e_5\e_8$.  If $p\mid\e_2$, then the last monomial in $L_3$ cannot be zero modulo $p$.
  If $p\mid\e_8$, then
  $p$ can divide only one of $\e_2$ or $\e_7$, but none of the remaining variables in
  the latter two sections of $L_3$, so one of those two sections
  is nonzero modulo $p$.  So $p\mid \e_5$. Now $p$ can divide at most one of $\e_6$ and $\e_7$,
  but none of the remaining variables, and so the last section of
  $L_1$ is nonzero modulo $p$.  A contradiction.

  By the same arguments (replacing vanishing modulo $p$ by vanishing over $\Qbar$), the log-anticanonical bundles
  $\omega_{\tS}(D_i)^\vee$ for $i=1,2,4,5$ and the bundles $\Ls_1$ and
  $\Ls_2$ are base point free, whence nef. On the other hand,
  $\omega_{\tS}(D_3)^\vee$ is not nef, since its intersection number with $E_5$
  is $-1$.
\end{proof}

\begin{lemma}
  For $i \in \{1,\dots,5\}$, the morphism $\rho\colon \tS\to S$ induces bijections
  \begin{equation*}
    \tUs_i(\ZZ) \cap \tV(\QQ) \to \Us_i(\ZZ)\cap V(\QQ).
  \end{equation*}
\end{lemma}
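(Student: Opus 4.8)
The plan is to produce the bijection by composing the standard fact that $\rho\colon\tS\to S$ is an isomorphism away from the exceptional locus with a careful bookkeeping of the boundaries and integral models on both sides. First I would recall that $\rho$ restricts to an isomorphism $\tS\setminus D_3\xrightarrow{\sim}S\setminus(Q_1\cup Q_2)$ of $\QQ$-varieties, since $D_3$ is exactly the union of all $(-2)$-curves, that is, the full exceptional locus of the minimal desingularization. For each $i\in\{1,\dots,5\}$ we have $D_i\subseteq D_3$ (for $i=1,2,3$ this is immediate; for $i=4,5$ one checks $E_5\subset D_4$ and $E_2\subset D_5$ are $(-1)$-curves, but these map isomorphically onto the lines $L_1,L_2$ in $S$, so $\rho$ still contracts nothing new there), and hence $\rho$ restricts to an isomorphism $\tU_i=\tS\setminus D_i\to U_i=S\setminus Z_i$ of $\QQ$-varieties for $i=1,2$, and an open immersion with image $U_i$ for $i=4,5$ — indeed $\rho(\tU_i)=S\setminus\rho(D_i)=S\setminus Z_i=U_i$ in every case, since $\rho$ maps each negative curve in $D_i$ onto the corresponding singularity or line defining $Z_i$. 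Intersecting with $\tV$ (the complement of all negative curves), which maps isomorphically to $V=S\setminus\{x_2=0\}$, we get a bijection on $\QQ$-points $\tU_i(\QQ)\cap\tV(\QQ)\to U_i(\QQ)\cap V(\QQ)$.

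Next I would upgrade this to an identification of the integral points. The point is that $\tUs_i$ and $\Us_i$ are integral models of the same $\QQ$-variety $\tU_i\cong U_i$ (via $\rho$), and one has to check that the two $\ZZ$-structures agree on the relevant locus. Here I would use the explicit description: by Lemma~\ref{lem:torsor}, a point of $\tUs_i(\ZZ)\cap\tV(\QQ)$ is represented (up to the $2^6$-action) by $\ee\in\ZZ^9$ with $\e_1\cdots\e_7\ne0$ satisfying~\eqref{eq:torsor},~\eqref{eq:surface-gcd}, and~\eqref{eq:pm1}; applying the anticanonical sections~\eqref{eq:antican_sections} produces $(x_0,\dots,x_4)\in\ZZ^5\setminus\{0\}$, and Lemma~\ref{lem:big-and-nef} (the non-vanishing of $M_i$ modulo every prime for $i=1,2$, $M_4$ for $i=4$, $M_5$ for $i=5$, together with the fact that the anticanonical sections never all vanish) shows precisely that these $x_j$ are coprime and satisfy the integrality condition~\eqref{eq:gcd} or~\eqref{eq:gcd-lines} defining $\Us_i(\ZZ)$. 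Conversely, given a point of $\Us_i(\ZZ)\cap V(\QQ)$ with coprime representative $(x_0,\dots,x_4)$, its preimage under $\rho$ in $\tS$ is a single $\QQ$-point of $\tU_i\cap\tV$, and the integrality condition~\eqref{eq:gcd}/\eqref{eq:gcd-lines} together with the defining equations~\eqref{eq:def_eq} forces the resulting point to extend to a $\ZZ$-point of $\tSs$ avoiding $\overline{D_i}$, i.e.\ a point of $\tUs_i(\ZZ)$ — this is where one uses that $\tSs$ is literally the blow-up of $\PP^2_\ZZ$ described in Section~\ref{sec:torsor}, so reduction mod $p$ commutes with the blow-up, and a point reduces into $\overline{D_i}$ mod $p$ if and only if the corresponding Cox coordinates fail~\eqref{eq:pm1}, if and only if the explicit gcd condition fails.

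The cleanest way to organize the second half is to run everything through the torsor: both sides are the quotient of the same parametrizing set, namely $\{\ee\in\ZZ^9 : \eqref{eq:torsor},\eqref{eq:surface-gcd},\eqref{eq:pm1},\ \e_1\cdots\e_7\ne0\}$, by the same $2^6$-action, once via Lemma~\ref{lem:torsor} (for $\tUs_i(\ZZ)\cap\tV(\QQ)$) and once via~\eqref{eq:antican_sections} followed by Lemma~\ref{lem:big-and-nef} (for $\Us_i(\ZZ)\cap V(\QQ)$), and the composite of the two quotient maps is exactly $\rho$; so the induced map on quotients is a bijection. I expect the main obstacle to be the bookkeeping for $i=4,5$: here $D_i$ contains a $(-1)$-curve whose image in $S$ is a line rather than a point, so one must check that restricting to $\tV$ (equivalently, imposing $x_2\ne0$, equivalently $\e_1\cdots\e_7\ne0$) really does make $\rho$ an isomorphism onto $U_i\cap V$ and not merely a birational map — i.e.\ that no collapsing happens along the relevant open locus — and that the gcd conditions~\eqref{eq:gcd-lines} match the Cox-coordinate conditions~\eqref{eq:pm1} via the non-vanishing of $M_4,M_5$; all of this is routine but must be done componentwise against Figure~\ref{fig:dynkin-diagram}. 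The cases $i=1,2$ are strictly easier since there $D_i$ consists only of $(-2)$-curves and $\rho$ is an honest isomorphism $\tU_i\to U_i$ from the start.
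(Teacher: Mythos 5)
Your proposal follows essentially the same route as the paper: both sides are parametrized via the universal torsor (Lemma~\ref{lem:torsor}), and the integrality conditions~\eqref{eq:gcd}/\eqref{eq:gcd-lines} are matched with the conditions~\eqref{eq:pm1} by expressing the coordinates $x_j$ through the anticanonical sections~\eqref{eq:antican_sections} and invoking Lemma~\ref{lem:big-and-nef} (the paper handles $i=3$ by intersecting the cases $i=1,2$, which your plan covers implicitly). One side remark is inaccurate but harmless: for $i=5$ the restriction $\rho\colon\tU_5\to U_5$ is not an open immersion, since $E_7\subset\tU_5$ is contracted to $Q_1\in U_5$; this does not affect the argument because the lemma only concerns points of $\tV\cong V$, which avoid all negative curves.
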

\begin{proof}
  Consider the morphism $f\colon \Ys\to \PP^4_\ZZ$ defined by $\ee\mapsto(s_0(\ee):\dots:s_4(\ee))$
  with the anticanonical sections $s_j$ in~\eqref{eq:antican_sections}. We only have to show that~\eqref{eq:surface-gcd} holds for $\ee$ if and only if the corresponding gcd-condition in~\eqref{eq:gcd} resp.~\eqref{eq:gcd-lines} holds for $f(\ee)$.
  To this end, we note that $\e_1\e_2 \e_3\e_4\e_5^2\e_6\e_7$ is in the radical of the ideal generated by $M_2$ as in Lemma~\ref{lem:big-and-nef},
  so~\eqref{eq:gcd} can be rewritten as
  \[
    \abs{\e_7}\gcd\{m(\ee) \mid m \in M_1\} = 1
    \quad \text{and}\quad
    \abs{\e_3\e_4\e_6}
    \gcd\{m(\ee) \mid m \in M_2\} = 1.
  \] 
  By Lemma~\ref{lem:big-and-nef}, these gcds are one, and so the claim follows for $i\in\{1,2\}$. Since $\Us_3$ and $\tUs_3$ are the intersections of the respective open subschemes with in the first two cases, the assertion follows for $i=3$.
  The cases $i=4$ and $i=5$ can be proved using an analogous reformulation of the first two conditions in~\eqref{eq:gcd-lines}.
\end{proof}

These sets of sections define adelic metrics on line bundles isomorphic
to $\omega_{\tS}(D_i)^\vee$ for $i \in \{1,2,4,5\}$ and the line bundles $\Ls_1$ and
$\Ls_2$, and the latter two metrics induce one on
$\Ls_1\otimes \Ls_2^\vee\cong \omega_{\tS}(D_3)^\vee$. The metrics on the bundles
isomorphic to the log-anticanonical bundles then induce log-anticanonical
height functions $\tH_i$ for $i \in \{1,\dots,5\}$.

\begin{lemma}\label{lem:heights}
  For $\ee = (\e_1,\dots,\e_9) \in \RR^9$ satisfying~\eqref{eq:torsor}
  and~\eqref{eq:pm1}, let
  \begin{align*}
    \Hs_i(\ee) = 
    \begin{cases}
      \max\{|\e_2\e_3\e_4\e_5\e_6\e_8|,|\e_1\e_2\e_3^2\e_4^2\e_5^2\e_6^2|,
      |\e_3\e_4^2\e_5^4\e_6^3|,|\e_8\e_9|\},
      & i=1;\\
    \max\{|\e_2\e_5\e_7\e_8|,|\e_1^2\e_2^2|,|\e_5^4\e_7^2|\}, & i=2;\\
      \max\{|\e_2\e_5\e_8|,|\e_1\e_2\e_5^2|,|\e_5^4|,
      \min\{\abs{\e_1^2\e_2^2},|\e_8\e_9|
      \}\}, & i=3;\\
    \max\{|\e_2\e_8|,|\e_1\e_2|,1\}, & i=4;\\
    \max\{|\e_1^2|,|\e_1\e_5^2\e_7|,|\e_5\e_7\e_8|\}, & i=5.
    \end{cases}
  \end{align*}
  For $\ee \in \Ys_i(\ZZ) \cap \pi^{-1}(\tV)(\QQ)$, we have
  $\Hs_i(\ee) = \tH_i(\pi(\ee)) = H_i(\rho(\pi(\ee)))$, where $H_i$ is the
  height in~\eqref{eq:height} and $\tH_i$ is the log-anticanonical height on
  $\tS(\QQ)$ induced by the sections in
  Lemma~\ref{lem:big-and-nef}.
\end{lemma}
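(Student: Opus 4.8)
The plan is to prove the claimed identities $\Hs_i(\ee) = \tH_i(\pi(\ee)) = H_i(\rho(\pi(\ee)))$ in two stages: first identify $\Hs_i(\ee)$ with the naive height $H_i$ of the image point on $S$, and then recognize $\Hs_i(\ee)$ as the adelic height $\tH_i$ coming from the sections in Lemma~\ref{lem:big-and-nef}.

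For the first stage, recall that $\rho\circ\pi\colon \Ys\to S$ is the composite of $\pi$ with the morphism $\tS\to S$, and on the level of coordinates this is exactly the map $f\colon \ee\mapsto(s_0(\ee):\dots:s_4(\ee))$ with the anticanonical sections $s_j$ in~\eqref{eq:antican_sections}; this is the same $f$ used in the proof of the preceding lemma. So for $\ee\in\Ys_i(\ZZ)\cap\pi^{-1}(\tV)(\QQ)$ we have $H_i(\rho(\pi(\ee))) = \max_{j\in J_i}|s_j(\ee)|$ where $J_i$ selects the coordinates appearing in the definition~\eqref{eq:height} of $H_i$, provided the $s_j(\ee)$ are coprime integers so that this is a legitimate naive height computation --- and coprimality is precisely what the previous lemma (together with Lemma~\ref{lem:big-and-nef}) guarantees, since~\eqref{eq:surface-gcd} and~\eqref{eq:pm1} hold. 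Now I would substitute~\eqref{eq:pm1}, i.e.\ $|\e_j|=1$ for every $E_j\subset|D_i|$, into each relevant $s_j(\ee)$: for $i=1$ this kills $\e_7$; for $i=2$ it kills $\e_3,\e_4,\e_6$; for $i=4$ it kills $\e_3,\ldots,\e_7$; for $i=5$ it kills $\e_3,\e_4,\e_6$; in each case what remains of the selected monomials is exactly the list inside $\Hs_i$. This is a purely mechanical check monomial by monomial, comparing~\eqref{eq:antican_sections} with the list in $\Hs_i$. The case $i=3$ requires a small extra remark: since $H_3$ involves $\min\{|x_1|,|x_4|\}$ rather than a max, one checks that after setting $\e_3,\e_4,\e_6,\e_7=\pm1$ the fourth and fifth anticanonical monomials become $\e_1^2\e_2^2$ and $\e_8\e_9$, so $\min\{|x_1|,|x_4|\}$ becomes $\min\{|\e_1^2\e_2^2|,|\e_8\e_9|\}$ as claimed.

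For the second stage, $\tH_i$ is by construction the height attached to the adelic metric on a line bundle isomorphic to $\omega_{\tS}(D_i)^\vee$ determined by the finite sets $M_i$ (and by $L_1,L_2$ for $i=3$); concretely, for a rational point $x\in\tS(\QQ)$ with coprime integer Cox coordinates $\ee$ one has $\tH_i(x)=\max_{m\in M_i}|m(\ee)|$ once Lemma~\ref{lem:big-and-nef} has established that the $m(\ee)$ have no common prime factor, so that the max over the archimedean and finite places collapses to the archimedean one. It therefore suffices to observe that when~\eqref{eq:pm1} holds --- which it does for $\ee\in\Ys_i(\ZZ)$ --- the monomials of $M_i$ restrict precisely to the monomials appearing in $\Hs_i$: e.g.\ for $i=2$ the set $M_2=\{\e_2\e_5\e_7\e_8,\e_1^2\e_2^2\e_3^2\e_4,\e_4\e_5^4\e_6^2\e_7^2\}$ becomes $\{\e_2\e_5\e_7\e_8,\e_1^2\e_2^2,\e_5^4\e_7^2\}$ after setting $\e_3,\e_4,\e_6=\pm1$, matching $\Hs_2$; similarly for $i=1,4,5$. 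For $i=3$ one uses instead the representation $\omega_{\tS}(D_3)^\vee\cong\Ls_1\otimes\Ls_2^\vee$ with $\Ls_1$ metrized by $L_1$ and $\Ls_2$ by $L_2$; the induced height is $\tH_3(x)=\max_{m\in L_1}|m(\ee)|\big/\max_{m\in L_2}|m(\ee)|$, and since $L_1=L_2L_3\cup\{\text{extra monomial}\}$ and $L_2=\e_1\e_2L_0\cup\{\e_4^2\e_5^6\e_6^4\e_7^3\}$, a short computation --- again using $\e_3,\e_4,\e_6,\e_7=\pm1$ and noting that the extra monomial $\e_1^3\e_2^3\e_3^2\e_4\e_8\e_9$ of $L_1$ corresponds under division by the term $\e_1^2\e_2^2L_0\ni \e_1^2\e_2^2\cdot\e_1\e_2\e_3^2\e_4^2\e_6$ of $L_2L_3$\dots --- collapses the quotient to the stated formula with the $\min$ appearing from the interplay of the two extra monomials. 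I would carry this out by first rewriting $L_1$ and $L_2$ after imposing~\eqref{eq:pm1}, factoring out the common monomial $\e_1\e_2\cdot(\text{denominator})$, and reading off the numerator list as $\{|\e_2\e_5\e_8|,|\e_1\e_2\e_5^2|,|\e_5^4|,\min\{\dots\}\}$.

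The main obstacle is the $i=3$ case: because the log-anticanonical bundle there is big but not nef, the height is not directly a max of monomials but a quotient of two such maxima, and one must verify that this quotient, after imposing the torsor relation~\eqref{eq:torsor} and the unit conditions~\eqref{eq:pm1}, genuinely simplifies to the somewhat unusual expression for $\Hs_3$ with its embedded $\min$ --- in particular one has to be careful that the quotient is computed with the same integral representative $\ee$ in numerator and denominator and that the common factor one cancels is exactly $\max_{m\in L_2}|m(\ee)|$, which forces re-deriving that $L_2L_3$-part of $L_1$ accounts for all but one monomial. The other cases ($i=1,2,4,5$) are routine monomial bookkeeping, but they must be done explicitly for each $i$ to confirm that the restricted $M_i$ matches $\Hs_i$ term for term and, for the identification with $H_i$, that the restricted selected anticanonical monomials match as well; I would present these as a single paragraph of verification rather than five separate computations.
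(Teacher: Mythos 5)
Your route is the same as the paper's---collapse $\tH_i$ to the archimedean maximum over $M_i$ (resp.\ the quotient of the maxima over $L_1,L_2$) using Lemma~\ref{lem:big-and-nef}, then impose~\eqref{eq:pm1} and compare with~\eqref{eq:height}---but two of the concrete steps you assert do not hold as stated. For $i=2$, the height $H_2$ involves $\abs{x_1}$ \emph{and} $\abs{x_2}$; after setting $\abs{\e_3}=\abs{\e_4}=\abs{\e_6}=1$ the coordinate $x_2$ corresponds to the monomial $\e_1\e_2\e_5^2\e_7$, which does not occur in $\Hs_2$, so your claim that the selected anticanonical monomials restrict ``exactly'' to the list inside $\Hs_i$ fails in this case. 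The missing (and necessary) observation is that $\abs{x_2}=\sqrt{\abs{\e_1^2\e_2^2}\cdot\abs{\e_5^4\e_7^2}}=\sqrt{\abs{x_1x_3}}$, so $\abs{x_2}$ can never attain the maximum; only with this does $H_2(\rho(\pi(\ee)))=\Hs_2(\ee)$ follow. (A smaller slip: for $i=5$ condition~\eqref{eq:pm1} also forces $\abs{\e_2}=1$, since $D_5=E_2+E_3+E_4+E_6$; with only $\e_3,\e_4,\e_6$ set to $\pm1$, as you write, the restricted monomials would not match $\Hs_5$.)

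For $i=3$, the step you defer is exactly the content of the lemma, and your sketch of it is not correct as stated. From $L_1=L_2L_3\cup\{\e_1^3\e_2^3\e_3^2\e_4\e_8\e_9\}$ one gets $\tH_3(\pi(\ee))=\max\bigl\{\max_{s\in L_3}\abs{s(\ee)},\ \abs{\e_1^3\e_2^3\e_3^2\e_4\e_8\e_9}/\max_{s\in L_2}\abs{s(\ee)}\bigr\}$, but the second term does \emph{not} collapse to $\min\{\abs{\e_1^2\e_2^2},\abs{\e_8\e_9}\}$: the elements $\e_1\e_2\cdot\e_1^2\e_2^2\e_3^3\e_4^2\e_6$ and $\e_1\e_2\e_7\e_8\e_9$ of $L_2=\e_1\e_2L_0\cup\{\e_4^2\e_5^6\e_6^4\e_7^3\}$ only give the \emph{upper} bound by that minimum, and the denominator can be strictly larger (for instance via $\e_4^2\e_5^6\e_6^4\e_7^3\in L_2$), in which case the quotient is strictly smaller than the minimum. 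What must be proved is equality of the two \emph{maxima}: besides the upper bound, one needs that whenever $\abs{\e_1^2\e_2^2}$ exceeds the three terms $\abs{\e_2\e_5\e_8},\abs{\e_1\e_2\e_5^2},\abs{\e_5^4}$, the maximum over $L_2$ is attained at one of the two elements above (here one uses $\abs{\e_3}=\abs{\e_4}=\abs{\e_6}=\abs{\e_7}=1$ and $\abs{\e_j}\ge 1$ for the remaining coordinates), so that the quotient is at least $\min\{\abs{\e_1^2\e_2^2},\abs{\e_8\e_9}\}$. This two-sided case analysis is the heart of the $i=3$ case and is absent from your proposal; with it, and with the $i=2$ fix above, the remaining monomial bookkeeping goes through as in the paper.
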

\begin{proof}  
  For $i\in\{1,2,4,5\}$, the metrics induce height functions $\tH_i(x)=H_{\PP^{N_i}}(f_i(x))$
  verifying $f_i(\pi(\ee)) = (m_0(\ee): \dots : m_{N_i}(\ee))$ for the sections $m_0,\dots,m_{N_i}\in M_i$
  constructed in Lemma~\ref{lem:big-and-nef},
  so
  \[
    H_i(\pi(\ee)) = \prod_v \max_{m\in M_i}\{\abs{m(\ee)}_v\}.
  \]
  By the same lemma, the $p$-adic contributions to 
  this product are $1$, whence $\tH_i(\pi(\ee)) = \Hs_i(\ee)$.

  To check
  that these height functions coincide with the ones defined in the introduction, we note that,
  for example, for a point in $\Ys_1(\ZZ)$ we have $\e_7\in \{\pm 1\}$, and
  thus $|\e_2\e_3\e_4\e_5\e_6\e_8|=|\e_2\e_3\e_4\e_5\e_6\e_7\e_8|=|x_0|$. We
  get analogous identities for the other coordinates and cases.  There is no section
  corresponding to $x_2$ in the second height function, but, for integral
  points on $\Ys_2$, we have
  $\abs{x_2}=\sqrt{\abs{\e_1^2\e_2^2}\abs{\e_5^4\e_7^2}}=\sqrt{\abs{x_1x_3}}$;
  hence, it can never contribute to the maximum.
  
  The case $i=3$ is more complicated.
  The log-anticanonical height function $\tH_3$ induced by the metrics on
  $\Ls_1$ and $\Ls_2$ satisfies
  \begin{equation*}
    \tH_3(\pi(\ee))=\frac{\max_{s\in L_1} \abs{s(\ee)}}{\max_{s\in L_2} \abs{s(\ee)}}
    = \max\left\{ \max_{s \in L_3} \abs{s(\ee)},
    \frac{\abs{\e_1^3 \e_2^3 \e_3^2 \e_4 \e_8 \e_9}}{\max_{s \in L_2} \abs{s(\ee)}}
    \right\}
  \end{equation*}
  by an analogous argument as in the previous cases. Now note that for
  $\ee \in \Ys_3(\ZZ) \cap \pi^{-1}(\tV)(\QQ)$,
  we can simplify this to
  \begin{equation}\label{eq:new-height-3}
    \widetilde H_3(\pi(\ee))=
    \max\{\abs{\e_2\e_5\e_8}, \abs{\e_1\e_2\e_5^2}, \abs{\e_5^4},
    \min\{\abs{\e_1^2\e_2^2}, \abs{\e_8\e_9}\}\}.
  \end{equation}
  Indeed, $\e_3,\e_4,\e_6,\e_7$ have absolute value $1$ and the remaining
  variables absolute value at least one. Then
  \[
    \frac{\abs{\e_1^3 \e_2^3 \e_3^2 \e_4 \e_8 \e_9}}{\max_{s \in L_2} \abs{s(\ee)}} \le \min\{\abs{\e_1^2\e_2^2}, \abs{\e_8\e_9}\}
  \]
  follows from $\e_1^3\e_2^3\e_3^4\e_4^2\e_6, \e_1\e_2\e_7\e_8\e_9\in L_2$.  Now
  assume that $\e_1^2\e_2^2$ is larger than the other three terms in the
  maximum in~\eqref{eq:new-height-3}; then the maximum over $L_2$ can only be
  attained by $\e_1^3\e_2^3\e_3^4\e_4^2\e_6$ or $\e_1\e_2\e_7\e_8\e_9$, and the
  inverse inequality follows.

  Finally, we note
  that $\abs{\e_3}=\abs{\e_4}=\abs{\e_6}=\abs{\e_7}=1$ implies
  $\abs{\e_1^2\e_2^2}=\abs{x_1}$ and $\abs{\e_8\e_9}=\abs{x_4}$.
\end{proof}

\begin{lemma}\label{lem:param}
  For $i \in \{1,\dots,5\}$, we have 
  \begin{equation*}
    N_i(B) = \frac{1}{2^{6-\#D_i}}\#\bigwhere{
      (\e_1\dots,\e_9) \in \ZZ^{9}
    }{& \text{\eqref{eq:torsor},~\eqref{eq:surface-gcd} hold},\ \e_j = 1\text{ if }E_j \subset D_i,\\
      & \e_1\cdots\e_7 \ne 0,\ \Hs_i(\e_1,\dots,\e_9) \le B
    },
  \end{equation*}
  where $\#D_i$ denotes the number of irreducible components of $D_i$.
\end{lemma}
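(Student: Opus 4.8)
The plan is to assemble Lemma~\ref{lem:param} from the three lemmas immediately preceding it. First I would invoke Lemma~\ref{lem:torsor}, which exhibits a $2^6$-to-$1$ correspondence
\[
  \Ys_i(\ZZ)\cap \pi^{-1}(\tV)(\QQ) \to \tUs_i(\ZZ)\cap \tV(\QQ),
\]
together with the explicit description of the left-hand set by conditions~\eqref{eq:torsor},~\eqref{eq:surface-gcd},~\eqref{eq:pm1}, and $\e_1\cdots\e_7\ne0$. Composing with the bijection $\tUs_i(\ZZ)\cap\tV(\QQ)\to\Us_i(\ZZ)\cap V(\QQ)$ from the unnumbered lemma, and using the reinterpretation $N_i(B)=\#\{\overline P\in\tUs_i(\ZZ)\cap\tV(\QQ)\mid H_i(\rho(\overline P))\le B\}$ from the introduction, I reduce counting points of $N_i(B)$ to counting the $\ee\in\ZZ^9$ in the torsor set whose height is $\le B$, up to the factor $2^6$. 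The height comparison is supplied by Lemma~\ref{lem:heights}: for $\ee\in\Ys_i(\ZZ)\cap\pi^{-1}(\tV)(\QQ)$ one has $H_i(\rho(\pi(\ee)))=\Hs_i(\ee)$, so the height bound $H_i(\rho(\pi(\ee)))\le B$ becomes $\Hs_i(\ee)\le B$.

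The remaining bookkeeping is to pass from the condition $\abs{\e_j}=1$ for $E_j\subset\abs{D_i}$ to the condition $\e_j=1$, which accounts for the change of the normalizing power of $2$. Each component $E_j$ of $D_i$ contributes a sign $\e_j\in\{\pm1\}$; there are $\#D_i$ such components, and the remaining $6-\#D_i$ torus directions still have free signs absorbed into the $2^{6-\#D_i}$ that survives. More precisely, I would argue that the map flipping the signs of all $\e_j$ with $E_j\subset D_i$ acts freely on the solution set of~\eqref{eq:torsor},~\eqref{eq:surface-gcd},~\eqref{eq:pm1},$\ \e_1\cdots\e_7\ne0$, and $\Hs_i(\ee)\le B$ — freeness because all these $\e_j$ are nonzero — and that $\Hs_i$ is invariant under it since $\Hs_i$ is a maximum (or min of maxima) of absolute values of monomials; likewise~\eqref{eq:surface-gcd} and~\eqref{eq:torsor} are sign-symmetric in an appropriate sense. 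Hence exactly one representative in each orbit of size $2^{\#D_i}$ has $\e_j=1$ for all $j$ with $E_j\subset D_i$, and
\[
  \#\{\ee : \eqref{eq:torsor},\eqref{eq:surface-gcd},\eqref{eq:pm1},\ \e_1\cdots\e_7\ne0,\ \Hs_i\le B\}
  = 2^{\#D_i}\cdot\#\{\ee : \eqref{eq:torsor},\eqref{eq:surface-gcd},\ \e_j=1\text{ if }E_j\subset D_i,\ \e_1\cdots\e_7\ne0,\ \Hs_i\le B\}.
\]
Combining with the factor $2^6$ from Lemma~\ref{lem:torsor} gives the normalization $2^{6-\#D_i}$ in the statement.

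The one genuine subtlety — and the step I expect to need the most care — is the sign invariance of $\Hs_i$ under flipping the $E_j\subset D_i$ together with the claim that~\eqref{eq:torsor} is preserved. For $i\in\{1,2,4,5\}$ this is immediate because, after setting the $D_i$-variables to units, $\Hs_i$ depends only on absolute values and the Cox relation~\eqref{eq:torsor} involves each monomial linearly so that flipping a single sign can be compensated; but it should be checked that flipping all $D_i$-signs simultaneously maps a solution of $\e_1\e_9+\e_2\e_8+\e_4\e_5^3\e_6^2\e_7=0$ to another solution — for each $D_i$ one inspects which of the three monomials $\e_1\e_9$, $\e_2\e_8$, $\e_4\e_5^3\e_6^2\e_7$ change sign and notes they all change sign or all stay fixed. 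For $i=3$ the height $\Hs_3$ additionally involves a minimum, but this is still a function of absolute values of monomials, so the same argument applies. I would spell this compatibility out component by component for $D_1,\dots,D_5$ using Figure~\ref{fig:dynkin-diagram}, and then the count follows.
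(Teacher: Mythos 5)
Your reduction of the count to the torsor — combining Lemma~\ref{lem:torsor}, the bijection $\tUs_i(\ZZ)\cap\tV(\QQ)\to\Us_i(\ZZ)\cap V(\QQ)$, and the height comparison of Lemma~\ref{lem:heights} — is exactly the paper's route. The gap is in the step you yourself flag as the subtle one: the sign normalization. The map that flips the signs of precisely the coordinates $\e_j$ with $E_j\subset D_i$ does \emph{not} preserve the torsor equation~\eqref{eq:torsor} in general, and the inspection you propose refutes rather than confirms your claim. For $i=1$ ($D_1=E_7$), flipping $\e_7$ changes the sign of $\e_4\e_5^3\e_6^2\e_7$ but leaves $\e_1\e_9$ and $\e_2\e_8$ fixed; for $i=2$ ($E_3,E_4,E_6$) and $i=4$ only the third monomial flips; for $i=5$ ($E_2,E_3,E_4,E_6$) the second and third monomials flip but not the first. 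Only for $i=3$ is the equation preserved, and even there your counting is inconsistent: a single simultaneous flip generates orbits of size $2$, whereas you need orbits of size $2^{\#D_i}$, which would require flipping the $\#D_i$ coordinates independently — and those individual flips certainly do not preserve~\eqref{eq:torsor}.

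The correct symmetry is the one already underlying the $2^6$-to-$1$ correspondence: the action of $\GGmZ^6(\ZZ)=\{\pm1\}^6$ on the torsor, acting on \emph{all nine} coordinates through the $\Pic\tS$-grading~\eqref{eq:Pic_degrees}. This preserves~\eqref{eq:torsor} because all three monomials have the same degree $\ell_0$, it preserves~\eqref{eq:surface-gcd} and $\e_1\cdots\e_7\ne 0$, and it preserves $\Hs_i$ on the relevant set because $\Hs_i=H_i\circ\rho\circ\pi$ there (Lemma~\ref{lem:heights}), so $\Hs_i$ only depends on the image point. Each fiber of the $2^6$-to-$1$ map is a single orbit consisting of points with $\abs{\e_j}=1$ for $E_j\subset\abs{D_i}$, and within such an orbit the number of points with $\e_j=+1$ for all $E_j\subset D_i$ is $2^{6-\#D_i}$, because the map $\{\pm1\}^6\to\{\pm1\}^{\#D_i}$, $t\mapsto(t^{\deg\e_j})_{E_j\subset D_i}$, is surjective — equivalently, the classes of the components of each $D_i$ are linearly independent modulo $2$ in $\Pic\tS$, a check your argument omits entirely. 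With these two repairs (use the full torus action rather than flips of the boundary coordinates alone, and verify the mod-$2$ independence), the bookkeeping goes through and you recover the factor $2^{6-\#D_i}$ as in the paper's proof.
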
 
\begin{proof}
  We combine Lemma~\ref{lem:torsor} and Lemma~\ref{lem:heights}.  The $\#D_i$
  coordinates $\e_j$ belonging to irreducible components $E_j\subset D_i$ satisfy
  $\abs{\e_j}=1$. By symmetry, we can further assume that $\e_j=1$, making the
  $2^6$-to-$1$-correspondence from Lemma~\ref{lem:torsor} a
  $2^{6-\#D_i}$-to-$1$-correspondence.
\end{proof}

\section{Counting}\label{sec:counting}

In our counting process, we treat $\e_9$ as a dependent variable using the
torsor equation from~\eqref{eq:cox}, which we regard as a congruence modulo the coefficient $\e_1$
of $\e_9$. First, we sum over $\e_8$, and then over the remaining
variables. Since this is similar to the case of rational points
in~\cite{MR2520770}, we shall be brief.

In this section, we use the notation
\begin{equation*}
  \eei = (\e_j)_{j \in J_i} =
  \begin{cases}
    (\e_1,\dots,\e_6), &i=1;\\
    (\e_1,\e_2,\e_5,\e_7), &i=2;\\
    (\e_1,\e_2,\e_5), &i=3;\\
    (\e_1,\e_2), &i=4;\\
    (\e_1,\e_5,\e_7), &i=5;
  \end{cases}
\end{equation*}
for $(7-\#D_i)$-uples indexed by 
\begin{equation}\label{eq:def_Ji}
J_i=\{j \in \{1,\dots,7\} \mid E_j \not\subset D_i\}.
\end{equation}
We write $\Hs_i(\eei,\e_8)$ for $\Hs_i(\e_1,\dots,\e_9)$ where $\e_j=1$
whenever $E_j \subset D_i$ and where $\e_9$ is expressed in terms of
$\e_1,\dots,\e_8$ using the torsor equation~\eqref{eq:torsor}, assuming
$\e_1\ne 0$.

\begin{lemma}\label{lem:first_summation}
  For $i \in \{1,\dots, 5\}$, we have
  \begin{equation*}
    N_i(B) = \frac{1}{2^{6-\#D_i}} \sum_{\eei \in \ZZ_{\ne 0}^{J_i}}
    \theta_1(\eei)V_{i,1}(\eei;B) + O(B \log B)
  \end{equation*}
  with
  \begin{equation*}
    V_{i,1}(\eei;B) = \int_{\Hs_i(\eei,\e_8) \le B}  \frac{\diff \e_8}{|\e_1|}
  \end{equation*}
  and
  \begin{equation*}
    \theta_1(\eei) = \prod_p \theta_{1,p}(I_p(\eei)),
  \end{equation*}
  where $I_p(\eei) = \{j \in J_i \mid p \mid \e_j\}$
  and
  \begin{equation*}
    \theta_{1,p}(I) =
    \begin{cases}
      1, & I = \emptyset, \{1\}, \{2\}, \{7\};\\
      1-\frac 1 p, & I = \{4\}, \{5\}, \{6\}, \{1,3\}, \{2,3\}, \{3,4\};
      \{4,6\}, \{5,6\}, \{5,7\};\\
      1- \frac 2 p, & I = \{3\};\\
      0, & \text{otherwise.}
    \end{cases}
  \end{equation*}
\end{lemma}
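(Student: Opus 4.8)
The plan is to carry out the summation over $\e_8$ explicitly, starting from the parametrisation of Lemma~\ref{lem:param} with $\e_9$ eliminated via the torsor equation, and then to match the resulting main term with the asserted product of the Euler product $\theta_1(\eei)$ and the real integral $V_{i,1}(\eei;B)$. Since this mirrors the first summation for rational points in~\cite{MR2520770}, I only sketch the steps. For each $i\in\{1,\dots,5\}$ the curve $E_1$ is not a component of $D_i$, so the condition $\e_1\cdots\e_7\ne 0$ in Lemma~\ref{lem:param} forces $\e_1\ne 0$; I then read~\eqref{eq:torsor} as the congruence $\e_2\e_8\equiv -\e_4\e_5^3\e_6^2\e_7\pmod{\e_1}$, which, once it holds, determines $\e_9=-(\e_2\e_8+\e_4\e_5^3\e_6^2\e_7)/\e_1\in\ZZ$. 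Reading off the edges at $E_8$ and $E_9$ in Figure~\ref{fig:dynkin-diagram}, the instances of~\eqref{eq:surface-gcd} involving $\e_8$ or $\e_9$ become $\gcd(\e_8,\e_1\e_3\e_4\e_5\e_6)=1$ and $\gcd(\e_9,\e_2\e_3\e_4\e_5\e_6)=1$ (coordinates fixed to $1$ being dropped), while the remaining conditions split into coprimalities among the $\eei$ themselves and into $\e_1\cdots\e_7\ne 0$, $\Hs_i(\eei,\e_8)\le B$. Thus $2^{6-\#D_i}N_i(B)$ is a sum over $\eei\in\ZZ_{\ne 0}^{J_i}$ of the number of admissible $\e_8\in\ZZ$, where the $\eei$ with a vanishing component among $\e_2,\dots,\e_7$ or violating the coprimality among themselves contribute $0$.

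I would then estimate, for fixed $\eei$, the number of admissible $\e_8$. The inequality $\Hs_i(\eei,\e_8)\le B$ confines $\e_8$ to a bounded set $R_i(\eei;B)\subset\RR$ of Lebesgue measure $|\e_1|\,V_{i,1}(\eei;B)$, which is a single interval for $i\in\{2,3,4,5\}$ and a union of at most two intervals for $i=1$, because the monomial $\e_8\e_9$ occurring in $\Hs_1$ is a quadratic function of $\e_8$. Writing $\e_8=\e_8^0+\e_1 t$ for the solutions of the congruence modulo $\e_1$ (so that $\e_9$ runs through a progression of common difference $\e_2$), the two coprimality conditions turn into congruence conditions on $t$; here $\gcd(\e_8,\e_1)=1$ and $\gcd(\e_9,\e_2)=1$ hold automatically, since the coprimalities forced on the allowed $\eei$ make $\e_8^0$ a unit modulo $\e_1$ and $\e_9^0$ a unit modulo $\e_2$, so only $\gcd(\e_8,\e_3\e_4\e_5\e_6)=1$ and $\gcd(\e_9,\e_3\e_4\e_5\e_6)=1$ remain. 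A standard count of integers in an interval lying in a prescribed union of residue classes (Möbius-expanding these two conditions, the pair pinned down modulo $\e_1$) then shows that the number of admissible $\e_8$ equals $\theta_1(\eei)\,V_{i,1}(\eei;B)+O(\tau(\e_3\e_4\e_5\e_6)^2)$, where $\theta_1(\eei)=\prod_p\theta_{1,p}(I_p(\eei))$: the factor $1/|\e_1|$ is supplied by the modulus of the congruence, and $\theta_{1,p}(I_p(\eei))$ is the local proportion of residues of $\e_8$ surviving the conditions at $p$, so in particular the prime factors of $\e_8$ itself impose nothing and $I_p(\eei)\subset J_i$.

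The explicit form of $\theta_{1,p}$ follows from a prime-by-prime analysis depending only on $I=I_p(\eei)$. The key point is that $\e_3$ does not occur in the torsor equation: if $p\mid\e_3$ but $p\nmid\e_4\e_5\e_6\e_7$, then $\e_4\e_5^3\e_6^2\e_7$ is a unit at $p$ and the congruence already pins $\e_8$ down modulo $p$; if instead $p$ divides one of $\e_4,\e_5,\e_6,\e_7$, that term vanishes modulo $p$ and $\e_9\equiv-\e_1^{-1}\e_2\e_8\pmod p$. Combining this with the adjacencies in Figure~\ref{fig:dynkin-diagram}, which make $\e_8\not\equiv 0\pmod p$ a genuine constraint exactly when $p\mid\e_1\e_3\e_4\e_5\e_6$ and $\e_9\not\equiv 0\pmod p$ a genuine constraint exactly when $p\mid\e_2\e_3\e_4\e_5\e_6$, yields the table entry by entry: $\theta_{1,p}(\{3\})=1-2/p$ because $E_3$ is adjacent to neither $E_8$ nor $E_9$, so both constraints are active and independent; $\theta_{1,p}(\{1\})=\theta_{1,p}(\{2\})=\theta_{1,p}(\{7\})=1$ because the relevant constraint becomes vacuous (for instance $E_7$ is adjacent to both $E_8$ and $E_9$); and $\theta_{1,p}(I)=0$ precisely when $I$ fails to be a clique in Figure~\ref{fig:dynkin-diagram}, i.e.\ exactly when such $\eei$ would violate~\eqref{eq:surface-gcd}.

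Finally I would sum the error terms over $\eei$. Only the torsor variables $\e_j$ with $j\in J_i\cap\{3,4,5,6\}$ enter the error $O(\tau(\e_3\e_4\e_5\e_6)^2)$, and in each of the five cases these variables all occur with exponent at least $2$ in some monomial of $\Hs_i$ (for $i=1$ that monomial is $\e_1\e_2\e_3^2\e_4^2\e_5^2\e_6^2$). Hence the inequality $\Hs_i\le B$ lets us sum the error over those variables against a convergent weight, while the remaining (light) variables contribute a single factor $\log B$, and the total error is $O(B\log B)$, exactly as in the corresponding step of~\cite{MR2520770}. I expect the main obstacle to be the bookkeeping rather than any individual estimate: assembling the $\theta_{1,p}$ table correctly --- the values $1$, $1-1/p$, $1-2/p$, and $0$ all occur and depend sensitively both on the edges at $E_8$, $E_9$ and on whether $p$ meets the torsor equation --- while keeping the error uniformly $O(B\log B)$ across the cases, including the two-interval phenomenon for $i=1$; this is precisely why the lemma is stated, and proved, in the brief manner of~\cite{MR2520770}.
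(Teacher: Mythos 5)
Your proposal is correct and is essentially the paper's proof: the paper simply invokes the first summation of \cite[Lemma~8.4]{MR2520770} (the congruence modulo $\e_1$, the M\"obius treatment of the coprimality of $\e_8$ and $\e_9$ with $\e_3\e_4\e_5\e_6$, which yields exactly your local densities $\theta_{1,p}$ and the factor $1/\abs{\e_1}$) and then records the case-by-case error sums, which agree with your use of the height monomials containing $\e_3^2\e_4^2\e_5^2\e_6^2$, $\e_5^4$, $\e_5^2$, etc., to reach $O(B\log B)$. The only (harmless) slip is that for $i=3$ the set of admissible $\e_8$ need not be a single interval: the term $\min\{\abs{\e_1^2\e_2^2},\abs{\e_8\e_9}\}$ in $\Hs_3$ makes it a union of at most two intervals when $\abs{\e_1^2\e_2^2}>B$, which is absorbed by the $O(1)$ per residue class exactly as in your case $i=1$.
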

\begin{proof}
  The proof is as in~\cite[Lemma~8.4]{MR2520770}, with slightly
  different height functions and some $\e_i=1$, which leads to
  different error terms. In the first case, using the second height
  condition, the error term is
  \begin{equation*}
    \ll \sum_{\e_1, \dots, \e_6} 2^{\omega(\e_3)+\omega(\e_3\e_4\e_5\e_6)} \ll
    \sum_{\e_2, \dots, \e_6}
    \frac{2^{\omega(\e_3)+\omega(\e_3\e_4\e_5\e_6)}B}{|\e_2\e_3^2\e_4^2\e_5^2\e_6^2|}
    \ll B\log B.
  \end{equation*}
  In the second case, using the second and the third height condition, it is
  \begin{equation*}
    \ll \sum_{\e_1, \e_2, \e_5, \e_7} 2^{\omega(\e_5)} \ll
    \sum_{\e_1, \e_5}
    \frac{2^{\omega(\e_5)}B}{|\e_1\e_5^2|}
    \ll B\log B.
  \end{equation*}
  In the third case, using the second height condition, it is
  \begin{equation*}
    \ll \sum_{\e_1, \e_2, \e_5} 2^{\omega(\e_5)} \ll
    \sum_{\e_1, \e_5}
    \frac{2^{\omega(\e_5)}B}{|\e_1\e_5^2|}
    \ll B\log B.
  \end{equation*}
  The remaining cases are very similar.
\end{proof}

\begin{lemma}\label{lem:surface-result-of-count}
  For $i \in \{1,\dots, 5\}$, we have
  \begin{equation*}
    N_i(B) =\frac{1}{2^{6-\#D_i}}\left(\prod_p \omega_{i,p}\right) V_{i,0}(B) + O(B(\log B)^{b_i-2}\log \log B)
  \end{equation*}
  with
  \begin{equation*}
    V_{i,0}(B) =\int_{|\e_j| \ge 1 \ \forall j \in J_i}
    V_{i,1}(\eei;B) \diff \eei
  \end{equation*}
  and
  \begin{equation*}
    \omega_{i,p} =
    \begin{cases}
      \left(1-\frac 1 p\right)^{6-\#D_i}\left(1+\frac{6-\#D_i}
        p\right), & i \in \{1,2,4,5,6\};\\
      \left(1-\frac 1 p\right)^2\left(1+\frac 2 p-\frac 1{p^2}\right), &i=3.
    \end{cases}    
  \end{equation*}
\end{lemma}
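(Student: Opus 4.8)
The argument mirrors the count of rational points on the same surface in~\cite[\S~8]{MR2520770}, to which it reduces up to cosmetic changes (the slightly different height functions $\Hs_i$, the coordinates frozen to $1$, and larger powers of $\log B$ in the error). Starting from Lemma~\ref{lem:first_summation}, the plan is to carry out the remaining summations over the coordinates $\e_j$, $j\in J_i$, one variable at a time, each time comparing a sum over an integer variable with an integral over a real variable; the local weights released in the process accumulate into the Euler product $\prod_p\omega_{i,p}$, leaving $V_{i,0}(B)$ and an accumulated error. This is legitimate because $\Hs_i(\eei,\e_8)\le B$ forces $|\e_j|\ll B^{O(1)}$ for every $j\in J_i$, so the sums are effectively finite, and, with the other coordinates fixed, $V_{i,1}$ is piecewise monotone and rapidly decaying in each real variable $\e_j$ with support of polynomial length (after the decomposition of its domain used in~\cite[\S~8]{MR2520770}) --- exactly the regularity needed to replace a weighted sum by an integral with a controlled error.

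The engine of the proof is the single-variable step, performed in an order dictated by the edge structure of Figure~\ref{fig:dynkin-diagram} as in~\cite[\S~8]{MR2520770}. Summing over one $\e_j$ with the others fixed, the weight $\theta_1(\eei)$, as a function of $\e_j$, is supported on $\e_j$ coprime to its non-neighbours and is otherwise multiplicative in $\e_j$ with local factors $1$, $1-1/p$, or $1-2/p$ read off the table in Lemma~\ref{lem:first_summation}; summing over $\e_j$ then yields the corresponding mean-value density times the integral $\int V_{i,1}\,\diff\e_j$, plus an error bounded via the variation of $V_{i,1}$ in $\e_j$ together with the estimate $\sum_{n\le x}2^{\omega(n)}/n\ll(\log x)^2$ and its truncated refinements (which are responsible for the eventual $\log\log B$). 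The residual weight on the remaining coordinates is again a product of local factors of the same type --- the coordinate $\e_3$, a $(-2)$-curve meeting three others, being the only one whose released density genuinely couples to other coordinates, treated as in~\cite{MR2520770} --- so the step iterates. It matters that one proceeds one variable at a time rather than by a single M\"obius inversion: the ``frozen'' form $\prod_{p\mid\e_j}(1-1/p)$ of such a weight has Euler product $\prod_p(1+O(1/p))$ as a free divisor sum but the convergent $\prod_p(1+O(1/p^2))$ as a genuine summation, which is what keeps the constant finite.

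After exhausting all $7-\#D_i$ coordinates, the accumulated constant equals $\prod_p\omega_{i,p}$, where $\omega_{i,p}$ is the mean value of the weight over residues modulo $p$, namely
\[
  \omega_{i,p}=\sum_{I\subseteq J_i}\theta_{1,p}(I)\,p^{-|I|}\Bigl(1-\tfrac1p\Bigr)^{|J_i|-|I|}.
\]
Substituting the table for $\theta_{1,p}$ and simplifying --- using that for $i\in\{1,2,4,5\}$ (and $i=6$ by the symmetry~\eqref{eq:symmetry}) every nonzero value other than $1$ is $1-1/p$ or $1-2/p$ on one of the small subsets listed, so that the sum telescopes --- gives $\omega_{i,p}=(1-1/p)^{6-\#D_i}(1+(6-\#D_i)/p)$ in those cases, while for $i=3$ the surviving factor $\theta_{1,p}(\{5\})=1-1/p$ produces the extra term, $\omega_{3,p}=(1-1/p)^2(1+2/p-1/p^2)$. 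Finally, tracking the power of $\log B$ through the $7-\#D_i$ summations shows that the accumulated error is one power of $\log B$ below the main term $V_{i,0}(B)\asymp B(\log B)^{b_i-1}$ (which will be established in Section~\ref{sec:volumes}), up to the $\log\log B$ factor, hence $O(B(\log B)^{b_i-2}\log\log B)$, exactly as in~\cite[\S~8]{MR2520770}.

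The main difficulty is not conceptual but the error bookkeeping: one must verify that, across the $7-\#D_i$ nested summations with their different height-induced ranges, the discrepancy between each sum and the corresponding integral never exceeds $O(B(\log B)^{b_i-2}\log\log B)$, and this is precisely where the piecewise monotonicity and decay of $V_{i,1}$ in each variable and the sharp truncated estimates for sums of $2^{\omega(\cdot)}$ are needed. Since all of this runs entirely parallel to the rational-point computation, the routine details can safely be delegated to~\cite[\S~8]{MR2520770}.
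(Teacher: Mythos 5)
Your proposal is correct and takes essentially the paper's route: the paper likewise starts from Lemma~\ref{lem:first_summation}, records decay bounds for $V_{i,1}$ (via the torsor equation and \cite[Lemma~5.1(4)]{MR2520770}), and then delegates the variable-by-variable summation and error bookkeeping to \cite[Proposition~4.3, Corollary~7.10]{MR2520770}, whose Euler factors are exactly the mean values $\sum_{I\subseteq J_i}\theta_{1,p}(I)p^{-|I|}(1-1/p)^{|J_i|-|I|}$ you wrote down and which do evaluate to the stated $\omega_{i,p}$. The only differences are cosmetic: you defer the explicit $V_{i,1}$ estimates to \cite[\S~8]{MR2520770} instead of stating them, and your aside that only $\e_3$ couples to other coordinates is slightly inaccurate (the pairs $\{4,6\}$, $\{5,6\}$, $\{5,7\}$ also carry nontrivial densities), but neither point affects the argument.
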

\begin{proof}
  In the first case, by~\eqref{eq:torsor}, the last height condition is
  \begin{equation}\label{eq:last_height_1}
    |(\e_2\e_8^2+\e_4\e_5^3\e_6^2\e_8)/\e_1| \le B.
  \end{equation}
  Hence, by~\cite[Lemma~5.1(4)]{MR2520770},
  \begin{equation*}
    V_{1,1}(\e_1, \dots, \e_6;B) \ll \frac{B^{1/2}}{|\e_1\e_2|^{1/2}} =
    \frac{B}{|\e_1\e_2\e_3\e_4\e_5\e_6|} \left(\frac{B}{|\e_1\e_2\e_3^2\e_4^2\e_5^2\e_6^2|}\right)^{-1/2}.
  \end{equation*}
  In the second case, we use
  \begin{equation*}
    V_{2,1}(\e_1, \e_2, \e_5, \e_7; B) \ll \frac{B}{|\e_1\e_2\e_5\e_7|}.
  \end{equation*}
  In the third case, we use
  \begin{equation*}
    V_{3,1}(\e_1, \e_2, \e_5; B) \ll \frac{B}{|\e_1\e_2\e_5|}.
  \end{equation*}
  Therefore,~\cite[Proposition~4.3, Corollary~7.10]{MR2520770} gives
  the result in the first three cases. The final cases are
  similar to the second and third case.
\end{proof}

\section{Volume asymptotics}\label{sec:volumes}

We must show that the real integrals $V_{i,0}(B)$ in
Lemma~\ref{lem:surface-result-of-count} grow of order
$B(\log B)^{b_i-1}$. In the first and third case, this is more subtle than for
rational points.

\begin{lemma}\label{lem:adjust_V1}
  We have $|V_{1,0}'(B)-V_{1,0}(B)|\ll B (\log B)^4$, where
  \begin{equation*}
    V_{1,0}'(B) = \int_{\substack{|\e_2|,\dots,|\e_6|\ge 1\\
        \Hs_1'(\e_1,\dots,\e_6,\e_8) \le B}}
    \frac{\diff \e_1 \dots \diff \e_6 \diff \e_8}{|\e_1|}
  \end{equation*}
  with
  \begin{equation*}
    \Hs_1'(\e_1,\dots,\e_6,\e_8) = \max\left\{
      \begin{aligned}
        &|\e_2\e_3\e_4\e_5\e_6\e_8|,|\e_1\e_2\e_3^2\e_4^2\e_5^2\e_6^2|,\\
        &|\e_3\e_4^2\e_5^4\e_6^3|,|\e_2\e_8^2/\e_1|,|\e_2\e_3^2\e_4^2\e_5^2\e_6^2|
      \end{aligned}
    \right\}.
  \end{equation*}
\end{lemma}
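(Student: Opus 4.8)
The plan is to estimate $V_{1,0}'(B)-V_{1,0}(B)$ directly by comparing the two domains of integration. Writing $\ee=(\e_1,\dots,\e_6,\e_8)$ and regarding both integrals as integrals of $\diff\ee/|\e_1|$ over subsets of $\{|\e_2|,\dots,|\e_6|\ge 1\}$, the difference is bounded in absolute value by the integral of $1/|\e_1|$ over the union of (I) the locus where $|\e_1|<1$ and $\Hs_1'(\ee)\le B$, which lies in the domain of $V_{1,0}'$ but not of $V_{1,0}$, and (II) the symmetric difference of $\{\Hs_1(\ee)\le B\}$ and $\{\Hs_1'(\ee)\le B\}$ inside $\{|\e_1|\ge 1\}$. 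Here $\Hs_1$ denotes the height of Lemma~\ref{lem:heights} with $\e_7=1$ and $\e_9$ eliminated by means of~\eqref{eq:torsor}, so that its fourth entry is $|\e_8\e_9|=|\e_2\e_8^2+\e_4\e_5^3\e_6^2\e_8|/|\e_1|$. It is enough to bound the contributions of (I) and (II) separately by $O(B(\log B)^4)$.

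For (I) the point is that $\Hs_1'(\ee)\le B$ forces $|\e_2\e_8^2/\e_1|\le B$, hence $|\e_1|\ge|\e_2\e_8^2|/B$; integrating $\e_1$ over $[\,|\e_2\e_8^2|/B,\,1)$ therefore contributes only $\ll\log(B/|\e_2\e_8^2|)\ll\log B$. One then integrates out $\e_8$ (the logarithmic singularity at $\e_8=0$ being harmless), and afterwards $\e_2$, $\e_3$ and finally $\e_4,\e_5,\e_6$, against the remaining constraints $|\e_2\e_8^2|\le B$, $|\e_2\e_3\e_4\e_5\e_6\e_8|\le B$, $|\e_2\e_3^2\e_4^2\e_5^2\e_6^2|\le B$ and $|\e_3\e_4^2\e_5^4\e_6^3|\le B$ coming from $\Hs_1'\le B$, which yields $\ll B(\log B)^4$.

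For (II) note that the monomials $|\e_2\e_3\e_4\e_5\e_6\e_8|$, $|\e_1\e_2\e_3^2\e_4^2\e_5^2\e_6^2|$ and $|\e_3\e_4^2\e_5^4\e_6^3|$ appear in both $\Hs_1$ and $\Hs_1'$, so on the symmetric difference they are all $\le\min\{\Hs_1(\ee),\Hs_1'(\ee)\}\le B$, and likewise $|\e_2\e_3^2\e_4^2\e_5^2\e_6^2|\le|\e_1\e_2\e_3^2\e_4^2\e_5^2\e_6^2|\le B$ since $|\e_1|\ge 1$. Thus the value of $\Hs_1$ there is governed by $|\e_8\e_9|=|\e_2\e_8^2+\e_4\e_5^3\e_6^2\e_8|/|\e_1|$ and that of $\Hs_1'$ by $|\e_2\e_8^2/\e_1|$; these two quantities differ by at most $|\e_4\e_5^3\e_6^2\e_8|/|\e_1|$, so one of them lies in $(\,B,\ B+|\e_4\e_5^3\e_6^2\e_8|/|\e_1|\,]$. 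For fixed $\e_1,\dots,\e_6$ this is a quadratic condition on $\e_8$, and solving it shows that the admissible $\e_8$ form a set of measure $\ll\min\{\,|\e_4\e_5^3\e_6^2|/|\e_2|,\ |\e_1|B/|\e_4\e_5^3\e_6^2|\,\}$, the second bound being the relevant one once $|\e_4\e_5^3\e_6^2|\gg\sqrt{|\e_1\e_2|B}$. Consequently (II) is bounded by
\[
  \int\min\left\{\frac{|\e_4\e_5^3\e_6^2|}{|\e_1\e_2|},\ \frac{B}{|\e_4\e_5^3\e_6^2|}\right\}\diff\e_1\cdots\diff\e_6
\]
taken over $\{|\e_j|\ge 1,\ |\e_1\e_2\e_3^2\e_4^2\e_5^2\e_6^2|\le B,\ |\e_3\e_4^2\e_5^4\e_6^3|\le B\}$, and I would show this is $O(B(\log B)^4)$.

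This last estimate is the main obstacle: using only the bound $|\e_4\e_5^3\e_6^2|/|\e_1\e_2|$ for the $\e_8$-integral gives only $B(\log B)^5$, and the gain of one logarithm requires using the better bound $B/|\e_4\e_5^3\e_6^2|$ in the regime $|\e_4\e_5^3\e_6^2|^2>|\e_1\e_2|B$ together with the constraint $|\e_1\e_2\e_3^2\e_4^2\e_5^2\e_6^2|\le B$. In logarithmic coordinates $v_j=\log|\e_j|$ the integrand times the Jacobian equals $e^{v_3+2v_4+4v_5+3v_6}$ where the first term realizes the minimum, and there the two constraints $v_1+v_2+2(v_3+v_4+v_5+v_6)\le\log B$ and $v_3+2v_4+4v_5+3v_6\le\log B$ pin the support near the maximum $\log B$ of the exponent down to a set of codimension two; where the second term realizes the minimum, the factor measuring the width of the $\e_8$-window forces the same two constraints to become nearly active simultaneously. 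In each case one power of $\log B$ is saved relative to a crude count, which produces the claimed bound. Carrying out these regime-by-regime computations — in particular checking that the part of the symmetric difference where the straddle is between $|\e_2\e_8^2/\e_1|$ and the strictly larger quantity $|\e_8\e_9|=|\e_2\e_8^2/\e_1|+|\e_4\e_5^3\e_6^2\e_8|/|\e_1|$ obeys the same estimate — is where the real work lies.
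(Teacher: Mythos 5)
Your decomposition of the difference into the region $|\e_1|<1$ and the symmetric difference of the two height conditions on $|\e_1|\ge 1$ is a legitimate alternative to the paper's route (the paper instead changes variables to $x_0,x_2,x_3$ and reduces everything to showing $W-W'=O(1)$ in Lemma~\ref{lem:treat_W}). However, your argument has a concrete error at its central step. On the part of the symmetric difference where $\Hs_1'\le B<\Hs_1$, the condition on $\e_8$ is $|\e_2\e_8^2|\le B|\e_1| < |\e_2\e_8^2+\e_4\e_5^3\e_6^2\e_8|$, and the set of admissible $\e_8$ is \emph{not} of measure $\ll |\e_1|B/|\e_4\e_5^3\e_6^2|$ when $|\e_4\e_5^3\e_6^2|$ is large: writing $c=\e_4\e_5^3\e_6^2$ and $K=B|\e_1|/|\e_2|$, for $|c|/|\e_2|\gg \sqrt K$ the admissible set is essentially the interval $K|\e_2|/|c| < |\e_8|\le \sqrt K$, of measure $\asymp\sqrt{B|\e_1|/|\e_2|}$, which can exceed your claimed bound by a power of $B$ (e.g.\ $\e_1=\e_2=\e_3=\e_4=\e_6=1$, $\e_5\asymp B^{1/4}$ gives measure $\asymp B^{1/2}$ against a claimed $\asymp B^{1/4}$). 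The bound $|\e_1|B/|c|$ is only valid on the other half of the symmetric difference ($\Hs_1\le B<\Hs_1'$), where $\e_8$ is pinned near $-c/\e_2$; in general the correct pointwise bound is $\min\{|c|/|\e_2|,\ \sqrt{B|\e_1|/|\e_2|}\}$.

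This matters because your log-saving analysis in the last paragraph is built on the faulty min, and because that analysis — which you yourself identify as ``where the real work lies'' — is only sketched, not carried out. The statement can still be rescued along your lines: with the corrected min, the second entry equals (after multiplying by the Jacobian) exactly the quantity maximized on the facet where $|\e_1\e_2\e_3^2\e_4^2\e_5^2\e_6^2|=B$, and there the constraint $|\e_3\e_4^2\e_5^4\e_6^3|\le B$ forces the first entry of the min to be the smaller one, with exponential decay in the defect $\log|\e_1\e_2\e_3|-\log|\e_5^2\e_6|$; this pins the region where the integrand is of size $B$ to an $O(1)$-neighborhood of a codimension-two set and yields $\ll B(\log B)^4$. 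But as written, the proposal asserts a false measure bound and defers the decisive estimate, so it does not yet constitute a proof. (Your treatment of the region $|\e_1|<1$ is fine in outline, provided one actually uses the constraint $|\e_2\e_3^2\e_4^2\e_5^2\e_6^2|\le B$ so that the $\e_8$-range is governed by $(B/|\e_2|)^{1/2}$; without it one only gets $B(\log B)^5$, which is essentially how the paper handles this region as well.)
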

\begin{proof}
  We must show that adding the condition
  $|\e_2\e_3^2\e_4^2\e_5^2\e_6^2| \le B$, removing the condition
  $|\e_1| \ge 1$, and replacing~\eqref{eq:last_height_1}
  by $|\e_2\e_8^2/\e_1| \le B$ in
  the integration domain changes the integral by $\ll B(\log B)^4$.

  Adding the condition $|\e_2\e_3^2\e_4^2\e_5^2\e_6^2| \le B$ does not change
  $V_{1,0}(B)$, since this inequality follows from $|\e_1| \ge B$ and the
  second height condition.
  Afterwards, we can remove the condition $|\e_1| \ge 1$ from $V_{1,0}(B)$ since
  this changes the integral by
  \begin{equation*}
    \int_{\substack{|\e_1|\le 1,\ |\e_2|,\dots,|\e_6|\ge 1\\
    \Hs_1(\e_1,\dots,\e_6,\e_8)\le B\\
    |\e_2\e_3^2\e_4^2\e_5^2\e_6^2| \le B}} \frac{\diff \e_1 \dots \diff \e_6
    \diff \e_8}{|\e_1|}
    \ll \int_{\substack{|\e_1| \le 1,\ |\e_2|,\dots,|\e_6|\ge 1\\
    |\e_2\e_3^2\e_4^2\e_5^2\e_6^2| \le B}}
    \frac{B^{1/2} \diff \e_1 \dots \diff \e_6}{|\e_1\e_2|^{1/2}},
  \end{equation*}
  where we estimate the integral over $\e_8$ as in the proof of
  Lemma~\ref{lem:surface-result-of-count}. Now we observe that the new
  condition $|\e_2\e_3^2\e_4^2\e_5^2\e_6^2| \le B$ together with
  $|\e_2|,\dots,|\e_6| \ge 1$ implies $|\e_2|,\dots,|\e_6|\le B$; all these
  conditions and $|\e_1|\le 1$ allow us to bound the error as required.

  Finally, we must replace~\eqref{eq:last_height_1} by $|\e_2\e_8^2/\e_1| \le B$. A comparison of $\Hs_1$ in
  Cox coordinates (Lemma~\ref{lem:heights}) with $H_1$ as in~\eqref{eq:height}
  motivates the transformation
  \begin{equation}\label{eq:change_of_coords}
    \e_8=\frac{B}{\e_2\e_3\e_4\e_5\e_6}x_0,\quad
    \e_1=\frac{B}{\e_2\e_3^2\e_4^2\e_5^2\e_6^2} x_2,
  \end{equation}
  which turns $\frac{\diff \e_1 \diff \e_8}{|\e_1|}$ into
  $\frac{B\diff x_0 \diff x_2}{|x_2\e_2\e_3\e_4\e_5\e_6|}$, and the
  transformation $\e_3 =\frac{B}{\e_4^2\e_5^4\e_6^3} x_3$, which turns
  $\frac{\diff \e_3}{|\e_3|}$ into $\frac{\diff x_3}{|x_3|}$. These
  transformations turn $\Hs_1(\e_1,\dots,\e_6,\e_8) \le B$ into
  \begin{equation*}
    |x_0|,|x_2|,|x_3|,|x_0(x_0+x_3)/x_2| \le 1.
  \end{equation*}
  Furthermore, they turn $|\e_3| \ge 1$ and
  $|\e_2\e_3^2\e_4^2\e_5^2\e_6^2| \le B$ (which imply
  $|\e_2\e_4^2\e_5^2\e_6^2| \le B$) into a condition $X_3 \le |x_3| \le X_3'$
  for certain $X_3$ and $X_3'$, whose values (depending on $\e_2,\e_4,\e_5,\e_5,B$)
  will not matter to us.
  Altogether, this shows that
  \begin{equation*}
    V_{1,0}(B) = \int_{\substack{|\e_2|, |\e_4|, |\e_5|, |\e_6| \ge 1\\|\e_2
        \e_4^2 \e_5^2 \e_6^2|\le B}} W(\e_2,\e_4,\e_5,\e_6,B)
    \frac{B\diff \e_2 \diff \e_4 \diff \e_5 \diff \e_6}{|\e_2\e_4\e_5\e_6|} +
    O(B(\log B)^4)
  \end{equation*}
  with
  \begin{equation*}
      W(\e_2,\e_4,\e_5,\e_6,B) = \int_{\substack{
          |x_0|,|x_2|,|x_3|,|x_0(x_0+x_3)/x_2| \le 1\\
      X_3 \le |x_3| \le X_3'
    }}
    \frac{\diff x_0 \diff x_2 \diff x_3}{|x_2 x_3|}\text{.}
  \end{equation*}
  Now the following Lemma~\ref{lem:treat_W} shows that we can replace
  $|x_0(x_0+x_3)/x_2|\le 1$ by $|x_0^2/x_2| \le 1$ with an error of $O(1)$. We
  plug this back into $V_{1,0}(B)$ and observe that the integral of
  $O(1)\cdot B/|\e_2\e_4\e_5\e_6|$ is $\ll B(\log B)^4$, while the inverse of
  our previous transformations turn the main term into
  $V_{1,0}'(B)$ since they turn $|x_0^2/x_2| \le 1$ into
  $|\e_2\e_8^2/\e_1| \le B$ and since we can remove the condition
  $|\e_2 \e_4^2 \e_5^2 \e_6^2|\le B$, which is implied by the others.
\end{proof}

To complete the proof of Lemma~\ref{lem:adjust_V1}, we show:

\begin{lemma}\label{lem:treat_W}
  We have $W(\e_2,\e_4,\e_5,\e_6,B) = W'(\e_2,\e_4,\e_5,\e_6,B)+O(1)$, where
  \begin{equation*}
    W'(\e_2,\e_4,\e_5,\e_6,B) = \int_{\substack{
          |x_0|,|x_2|,|x_3|,|x_0^2/x_2| \le 1\\
      X_3 \le |x_3| \le X_3'
    }}
    \frac{\diff x_0 \diff x_2 \diff x_3}{|x_2 x_3|}\text{.}
  \end{equation*}
\end{lemma}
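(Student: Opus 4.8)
The plan is to bound $|W-W'|$ by the measure, with respect to the integrand $1/|x_2x_3|$, of the symmetric difference of the two integration domains; these differ only in the constraint imposed on $x_2$, namely $|x_0(x_0+x_3)/x_2|\le1$ versus $|x_0^2/x_2|\le1$. First I would integrate out $x_2$ for fixed $(x_0,x_3)$ with $x_0\ne0$: each constraint, together with $|x_2|\le1$, cuts out an annulus $\{\lambda\le|x_2|\le1\}$ in the $x_2$-line, with inner radius $\lambda=|x_0(x_0+x_3)|$ in one case and $\lambda=x_0^2$ in the other (empty when the inner radius exceeds $1$), so the symmetric difference is again an annulus whose $\diff x_2/|x_2|$-measure is at most $2\bigl|\log|x_0+x_3|-\log|x_0|\bigr|$. (On the measure-zero locus $x_0=0$ the two domains agree.) This reduces the claim to the estimate
\[
  \int_{\substack{|x_0|\le1\\ X_3\le|x_3|\le1}}\frac{\bigl|\log|x_0+x_3|-\log|x_0|\bigr|}{|x_3|}\,\diff x_0\,\diff x_3=O(1),
\]
uniformly in $X_3$, hence in $\e_2,\e_4,\e_5,\e_6,B$.

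The step I expect to be the main obstacle is estimating the inner integral over $x_0$: one needs the bound $O\bigl(|x_3|\log(e/|x_3|)\bigr)$, and the obvious uniform bound $\int_{|x_0|\le1}\bigl|\log|x_0+x_3|-\log|x_0|\bigr|\,\diff x_0=O(1)$ — immediate from $\int_{|x_0|\le1}\bigl|\log|x_0|\bigr|\,\diff x_0=2$ and a translation — is genuinely too weak, since coupling it with the weight $\diff x_3/|x_3|$ would leave a factor $\log(1/X_3)$, and $X_3$ may be an arbitrarily small power of $1/B$. To extract the extra power of $|x_3|$ I would split the $x_0$-integral at $|x_0|=|x_3|$. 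On $|x_0|<|x_3|$, both $\int_{|x_0|<|x_3|}\bigl|\log|x_0|\bigr|\,\diff x_0$ and, after translating, $\int\bigl|\log|x_0+x_3|\bigr|\,\diff x_0$ are $\ll|x_3|\log(e/|x_3|)$ by the elementary identity $\int_0^t(-\log s)\,\diff s=t\log(e/t)$. On $|x_0|\ge|x_3|$ I would rewrite the integrand as $\bigl|\log|1+x_3/x_0|\bigr|$ and substitute $w=x_3/x_0$, turning the integral into $\ll|x_3|\int_{|x_3|}^{1}w^{-1}\,\diff w\ll|x_3|\log(e/|x_3|)$; here one uses that $|\log(1+w)|\ll|w|$ for $w$ bounded away from $-1$ together with the integrability of $|\log(1+w)|$ near $w=-1$. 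Conceptually, on the bulk of the domain $|x_0|$ dominates $|x_3|$, so $|1+x_3/x_0|$ is close to $1$ and the gap between the two $x_2$-constraints is small, which is exactly what produces the saved power of $|x_3|$.

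Plugging this back in gives
\[
  |W-W'|\ll\int_{X_3\le|x_3|\le1}\frac{|x_3|\log(e/|x_3|)}{|x_3|}\,\diff x_3\le\int_{|x_3|\le1}\log(e/|x_3|)\,\diff x_3=O(1),
\]
uniformly in $X_3$ and $X_3'$ (only $|x_3|\le1$ was used), which proves the lemma and thereby completes the proof of Lemma~\ref{lem:adjust_V1}.
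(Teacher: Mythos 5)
Your argument is correct, and it is genuinely organized differently from the paper's. The paper integrates out $x_2$ exactly, writes the resulting weight as $-2\log|x_0|-2\log|x_0+x_3|$, and evaluates the integral of each term separately --- via the translation $x_0'=x_0+x_3$ and three $O(1)$ error estimates (its $R_1,R_2,R_3$, coming from adding or removing the constraints $|x_0(x_0+x_3)|\le 1$, $|x_0'|\le 1$, $|x_0'-x_3|\le 1$) --- obtaining $4\int\diff x_3/|x_3|+O(1)$ from each term and then matching $4+4=8$ against $\int_{|x_0|,|x_2|,|x_0^2/x_2|\le 1}\diff x_0\diff x_2/|x_2|=8$, i.e.\ against $W'$. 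You instead bound $|W-W'|$ in one shot: for fixed $(x_0,x_3)$ the two $x_2$-conditions cut out annuli whose symmetric difference has $\diff x_2/|x_2|$-measure at most $2\bigl|\log|x_0+x_3|-\log|x_0|\bigr|$, so the lemma reduces to the single estimate $\int_{|x_0|\le 1}\bigl|\log|x_0+x_3|-\log|x_0|\bigr|\diff x_0\ll|x_3|\log(e/|x_3|)$, proved by splitting at $|x_0|=|x_3|$ and substituting $w=x_3/x_0$; the saved factor $|x_3|$ absorbs the weight $1/|x_3|$ and makes the uniformity in $X_3,X_3'$ immediate --- and you correctly identify this saving as the crux, since the naive $O(1)$ bound on the inner integral would leave an unacceptable $\log(1/X_3)$ loss. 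The details check out, including the degenerate cases (inner radius exceeding $1$, the measure-zero loci $x_0=0$ and $x_0+x_3=0$, and the contribution of $w$ near $-1$ via integrability of $\log|1+w|$ against $w^{-2}$). What each route buys: the paper's computation also evaluates $W$ up to $O(1)$ as $8\int\diff x_3/|x_3|$, dovetailing with the reuse of the constant $8$ from~\eqref{eq:int_x0x2} in Lemma~\ref{lem:V0}, whereas your symmetric-difference argument is shorter, avoids computing any explicit constants, and isolates exactly the assertion of the lemma.
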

\begin{proof}
  As a first step, we integrate over $x_2$ to get
    \begin{equation}\label{eq:W-step-1}
      W(\e_2,\e_4,\e_5,\e_6,B) = \int_{\substack{
        |x_0(x_0+x_3)| \le 1 \\
        |x_0|,|x_3| \le 1 \\
        X_3 \le |x_3| \le X_3'
      }} (-2\log |x_0| - 2 \log |x_0+x_3|) \frac{\diff x_0 \diff x_3}{|x_3|}
    \end{equation}
    and shall integrate the two terms individually.
    
    To determine the integral over the first one, we remove the condition
    $|x_0(x_0+x_3)| \le 1$, introducing an error of at most
    \[
    |R_1(\e_2,\e_4,\e_5,\e_6,B)| \le 4 \int_{\substack{
      x_0,|x_3| \le 1, x_0\ge 0\\
      |x_0(x_0+x_3)| \ge 1 \\
    }}
    -\log x_0 \frac{\diff x_0 \diff x_3}{|x_3|}
    \]
    by using the symmetry in the signs of $x_0$ and $x_3$. The last inequality implies that $x_3$ has a distance of at least $1/|x_0|$ (which is $\geq 1$) from $-x_0$. Since $x_0>0$ and $x_3>-1$, it cannot be smaller, and thus $-x_0+1/x_0 \le x_3 \le 1$ holds. We thus get
    \begin{align*}
      |R_1(\e_2,\e_4,\e_5,\e_6,B)|
      &\ll \int_{0\leq x_0 \leq 1} -\log x_0
      \left(\int_{-x_0+\frac{1}{x_0}\le x_3 \le 1}  \frac{\diff x_3}{|x_3|} \right) \diff x_0 \\
      & \ll \int_{0\leq x_0 \leq \frac{\sqrt{5}-1}{2}} \left| \log x_0 \log \left(-x_0 + \frac{1}{x_0}\right)\right| \diff x_0 \ll 1\text{.}
    \end{align*}
    We can now integrate the first term in~\eqref{eq:W-step-1} over $x_0$ and get
     \begin{equation}\label{eq:W-term-1}
      \int_{\substack{
        |x_0|,|x_3|, |x_0(x_0+x_3)| \le 1 \\
        X_3 \le |x_3| \le X_3'
      }}
      -2\log |x_0| \frac{\diff x_0 \diff x_3}{|x_3|}
      = \int_{\substack{
        |x_3| \le 1 \\
        X_3\le |x_3| \le X_3'
      }}
      4 \frac{\diff x_3}{|x_3|} + O(1)\text{.}
    \end{equation}

    To treat the second term, we begin with a change of variables $x_0'=x_0+x_3$ and add the condition $|x_0'|\le 1$, introducing an error of at most
    \begin{equation*}
    |R_2(\e_2,\e_4,\e_5,\e_6,B)|  \le \int_{\substack{
      |x_0'-x_3|,|x_3|, |x_0'(x_0'-x_3)| \le 1 \\
      x_0' > 1
    }}
     4 \log x_0' \frac{\diff x_0' \diff x_3}{|x_3|}\text{,}
    \end{equation*}
    again using the symmetry of the integral. The third condition implies $|x_3-x_0'| \leq 1/|x_0'| < 1$---that is, $x_0' - 1/x_0' < x_3$---and thus we get
    \begin{align*}
      |R_2(\e_2,\e_4,\e_5,\e_6,B)| & \ll
      \int_{x_0' > 1} \log x_0'
      \left(\int_{x_0'-\frac{1}{x_0'}}^{1}  \frac{\diff x_3}{|x_3|} \right) \diff x_0' \\
      & \ll
      \int_{1< x_0' \le 2}  \log x_0'
      \abs{\log\left(x_0'-\frac{1}{x_0'}\right)} \diff x_0' \ll 1\text{.}
    \end{align*}
    (For the second inequality, note that $x_0' - 1/x_0'\le1$ implies $x_0'\le 2$.) Thus, the second term of~\eqref{eq:W-step-1} is
    \[
    \int_{\substack{
      |x_0'(x_0'-x_3)| ,
      |x_0'|,|x_0'-x_3|,|x_3| \le 1 \\
      X_3 \le |x_3| \le X_3'
    }} - 2 \log |x_0'| \frac{\diff x_0' \diff x_3}{|x_3|} +O(1)\text{.}
    \]
    The condition $|x_0'(x_0'-x_3)|\le 1$ is implied by the second and third condition, so we can remove it. Removing $|x_0'-x_3|\le 1$ introduces an error of at most
    \[
      |R_3(\e_2,\e_4,\e_5,\e_6,B)| \le
       \int_{\substack{
        x_0',|x_3| \le 1\\
        |x_0'-x_3| > 1\\
        x_0'\ge 0
        }} - 2 \log x_0' \frac{\diff x_0' \diff x_3}{|x_3|}
    \]
    by the symmetry of the integral.
    The conditions imply $-1 \le x_3 \le x_0'-1$ and thus
    \begin{align*}
      |R_3(\e_2,\e_4,\e_5,\e_6,B)|
      &\ll
      \int_{0 \le x_0' \le 1} - \log x_0'
      \left(\int_{-1}^{x_0'-1}  \frac{\diff x_3}{|x_3|} \right) \diff x_0'\\
      & \ll \int_{0 \le x_0' \le 1} \log x_0'
      \log|x_0'-1|\diff x_0' \ll 1\text{.}
    \end{align*}
    Thus, the integral of the second summand of~\eqref{eq:W-step-1} is
    \begin{equation}\label{eq:W-term-2}
    \int_{\substack{
      |x_0'|,|x_3| \le 1 \\
      X_3 \le |x_3| \le X_3'
    }} - 2 \log |x_0'| \frac{\diff x_0' \diff x_3}{|x_3|} +O(1) =
    \int_{\substack{
      |x_3| \le 1 \\
      X_3 \le |x_3| \le X_3'}} 4 \frac{\diff x_3}{|x_3|} +O(1)\text{.}
\end{equation}
Since
\begin{equation}\label{eq:int_x0x2}
  \int_{|x_0|,|x_2|,|x_0^2/x_2| \le 1} \frac{\diff x_0 \diff x_2}{|x_2|} = 8,
\end{equation}
adding~\eqref{eq:W-term-1} and~\eqref{eq:W-term-2} yields the desired result.
\end{proof}

\begin{lemma}\label{lem:adjust_V3}
  We have $|V_{3,0}'(B)-V_{3,0}(B)|\ll B (\log B)^2$, where
  \begin{equation*}
    V_{3,0}'(B)= \int_{\substack{|\e_1|,|\e_2|,|\e_5|\ge 1,\\\Hs_3'(\e_1,\e_2,\e_5,\e_8) \le B}}
    \frac{\diff \e_1 \diff \e_2 \diff \e_5 \diff \e_8}{|\e_1|}
  \end{equation*}
  with
  \begin{equation*}
    \Hs_3'(\e_1,\e_2,\e_5,\e_8)=\max\{|\e_2\e_5\e_8|,|\e_1\e_2\e_5^2|,|\e_5^4|,\abs{\e_1^2\e_2^2}\}.
  \end{equation*}
\end{lemma}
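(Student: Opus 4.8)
The plan is to exploit that $\Hs_3$ and $\Hs_3'$ agree except in their last argument, where $\min\{|\e_1^2\e_2^2|,|\e_8\e_9|\}\le|\e_1^2\e_2^2|$ --- here $\e_9$ is eliminated via~\eqref{eq:torsor} with $\e_3=\e_4=\e_6=\e_7=1$, so $\e_9=-(\e_2\e_8+\e_5^3)/\e_1$. Thus $\Hs_3\le\Hs_3'$ pointwise, so $V_{3,0}'(B)\le V_{3,0}(B)$, and the difference equals the integral of $\diff\e_1\diff\e_2\diff\e_5\diff\e_8/|\e_1|$ over the region $R$ inside $|\e_1|,|\e_2|,|\e_5|\ge 1$ where $\Hs_3\le B<\Hs_3'$. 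First I would describe $R$ explicitly: on it the inequalities $|\e_2\e_5\e_8|\le B$, $|\e_1\e_2\e_5^2|\le B$, and $|\e_5|^4\le B$ hold (they are dominated by $\Hs_3\le B$), while $\Hs_3'>B$ together with these forces $|\e_1^2\e_2^2|>B$, so that $|\e_8\e_9|=\min\{|\e_1^2\e_2^2|,|\e_8\e_9|\}\le B$ on $R$. In particular $|\e_1\e_2|>B^{1/2}$, which combined with $|\e_1\e_2\e_5^2|\le B$ also gives $|\e_5|<B^{1/4}$.

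Next I would integrate over $\e_8$. On $R$ the condition $|\e_8\e_9|\le B$ is the quadratic inequality $|\e_2\e_8^2+\e_5^3\e_8|\le B|\e_1|$ in $\e_8$, with leading coefficient $\e_2$; by~\cite[Lemma~5.1(4)]{MR2520770} --- exactly as in the proof of Lemma~\ref{lem:surface-result-of-count} --- the set of such $\e_8$ has measure $\ll(B|\e_1|/|\e_2|)^{1/2}$, so the inner integral of $1/|\e_1|$ over $\e_8$ is $\ll(B/|\e_1\e_2|)^{1/2}$.

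It then remains to estimate
\begin{equation*}
  \int_{\substack{1\le|\e_1|,|\e_2|,|\e_5|\\ B^{1/2}<|\e_1\e_2|\le B/|\e_5|^2}}
  \left(\frac{B}{|\e_1\e_2|}\right)^{1/2}\diff\e_1\diff\e_2\diff\e_5.
\end{equation*}
Substituting $v=\e_1\e_2$ while keeping $\e_1$, the integral of $\diff\e_1/|\e_1|$ over $1\le|\e_1|\le|v|$ contributes $\ll\log B$; the integral of $(B/|v|)^{1/2}$ over $B^{1/2}<|v|\le B/|\e_5|^2$ contributes $\ll B/|\e_5|$; and the remaining integral of $1/|\e_5|$ over $1\le|\e_5|<B^{1/4}$ contributes another $\ll\log B$. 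Multiplying these gives the bound $\ll B(\log B)^2$, as required.

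The argument is essentially mechanical; the only delicate point is to integrate out $\e_8$ using the quadratic bound coming from $|\e_8\e_9|\le B$ --- available precisely because we work inside $\{\Hs_3\le B\}$ --- rather than the cruder linear bound $|\e_8|\le B/|\e_2\e_5|$ afforded by $|\e_2\e_5\e_8|\le B$ alone, which would cost an extra factor of $\log B$ and just miss the target.
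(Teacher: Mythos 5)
Your proposal is correct and follows essentially the same route as the paper: identify the difference region as the one where the first three height terms and $|\e_8\e_9|=|(\e_2\e_8^2+\e_5^3\e_8)/\e_1|$ are at most $B$ while $|\e_1^2\e_2^2|>B$, integrate out $\e_8$ via the quadratic-polynomial measure bound of \cite[Lemma~5.1(4)]{MR2520770} to get $\ll(B/|\e_1\e_2|)^{1/2}$, and finish with an elementary volume estimate yielding $B(\log B)^2$. The only (immaterial) difference is the bookkeeping in the last step: the paper integrates over $|\e_5|\le(B/|\e_1\e_2|)^{1/2}$ and then over $1\le|\e_1|,|\e_2|\le B$, whereas you substitute $v=\e_1\e_2$ and use the extra constraints $|\e_1\e_2|>B^{1/2}$, $|\e_5|<B^{1/4}$; both give the same bound.
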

\begin{proof}
  The difference that we must estimate is the integral over
  \[
    \abs{\e_2\e_5\e_8}, \abs{\e_1\e_2\e_5^2}, \abs{\e_5^4},
    \abs{(\e_2\e_8^2+\e_5^3\e_8)/\e_1} \le B \le \abs{\e_1^2\e_2^2}.
  \]
  Using the condition $\abs{(\e_2\e_8^2+\e_5^3\e_8)/\e_1} \le B$
  in~\cite[Lemma~5.1(4)]{MR2520770}, we have
  \begin{equation*}
    |V_{3,0}'(B)-V_{3,0}(B)| \ll \int_{\substack{|\e_1|,|\e_2|,|\e_5|\ge
        1\\|\e_1\e_2\e_5^2| \le B}} \frac{B^{1/2} \diff\e_1 \diff\e_2 \diff\e_5}{|\e_1\e_2|^{1/2}}.
  \end{equation*}
  The remaining conditions imply $|\e_1|,|\e_2|\le B$. Now the result follows
  by integrating first over $|\e_5| \le (B/|\e_1\e_2|)^{1/2}$ and then over $1
  \le |\e_1|,|\e_2| \le B$.
\end{proof}

\begin{lemma}\label{lem:V0}
  We have
  \begin{align*}
    V_{1,0}'(B) &= 2^5 C_1 B (\log B)^5,\quad
    V_{2,0}(B) = 2^3 C_2 B (\log B)^4,\quad
    V_{3,0}'(B) = 2^2 C_3 B (\log B)^3,\\
    V_{4,0}(B) &= 2 C_4 B (\log B)^2,\quad \text{and}\ \quad
    V_{5,0}(B) = 2^2 C_5 B (\log B)^3
  \end{align*}
  with
  \begin{align*}
      C_1
      & = 8 \vol\bigwhere{
        (t_2,\dots, t_6) \in \RRnn^5
      }{
        &t_2+2t_3+2t_4+2t_5+2t_6 \le 1,\\
        &t_3+2t_4+4t_5+3t_6 \le 1
      } = \frac{13}{4320},
      \\
      C_2
      & = 4 \vol\where{
        (t_1,t_2,t_5,t_7) \in \RRnn^4
      }{
        2t_1+2t_2 \le 1,\
        4t_5+2t_7 \le 1
      } = \frac{1}{32},
      \\
      C_3
      & = 4 \vol\where{
        (t_1,t_2,t_5) \in \RRnn^3
      }{
        2t_1+2t_2 \le 1,\
        4t_5 \le 1
      } = \frac{1}{8},
      \\
      C_4
      & = 4 \vol\{(t_1,t_2) \in \RRnn^2 \mid t_1+t_2 \le 1\} = 2,\quad \text{and}\\
      C_5
      & = 4 \vol\where{
        (t_1,t_5,t_7) \in \RRnn^3
      }{
        2t_1 \le 1,\
        t_1+2t_5+t_7 \le 1
      } = \frac{7}{24}.
  \end{align*}
\end{lemma}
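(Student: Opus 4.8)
The plan is to turn each of these real integrals into the volume of a polytope in two steps: integrate out $\e_8$, and then pass to the logarithmic coordinates $\e_j=\pm B^{t_j}$.

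Consider first $i\in\{2,3,4,5\}$, using $\Hs_3'$ when $i=3$. In each of these cases $\e_8$ occurs in exactly one of the monomials defining the height function, and there only linearly; writing that monomial as $\e_8\,\mu(\eei)$, one checks in each case that $|\e_1\mu(\eei)|=\prod_{j\in J_i}|\e_j|$. Integrating over $\e_8$ in $V_{i,1}(\eei;B)=\int_{\Hs_i(\eei,\e_8)\le B}\frac{\diff\e_8}{|\e_1|}$ gives $\frac{2B}{\prod_{j\in J_i}|\e_j|}$ on the region where the remaining, purely monomial, height conditions hold and $0$ elsewhere; this is exact because those conditions do not involve $\e_8$. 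Now $\e_j=\pm B^{t_j}$ ($j\in J_i$) is a bijection of $\{|\e_j|\ge 1\}$ onto $\{t_j\ge 0\}$ with $\frac{\diff\e_j}{|\e_j|}=\log B\,\diff t_j$ and a factor $2$ per coordinate for the two signs, and it turns each monomial condition $\bigl|\prod_j\e_j^{a_j}\bigr|\le B$ into the linear inequality $\sum_j a_jt_j\le 1$. Together with $t_j\ge 0$ these cut out a bounded polytope $P_i$, so no truncation is needed and $V_{i,0}(B)=2^{1+\#J_i}B(\log B)^{\#J_i}\vol(P_i)$ exactly; since $\#J_i+\#D_i=7$ and $\#J_i=b_i-1$ this equals $2^{6-\#D_i}C_iB(\log B)^{b_i-1}$ with $C_i=4\vol(P_i)$. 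For $i=3$ one also observes that the inequality $t_1+t_2+2t_5\le 1$ coming from $|\e_1\e_2\e_5^2|\le B$ is implied by $2t_1+2t_2\le 1$ and $4t_5\le 1$, so that $P_3$ is the polytope in the statement. The case $i=6$ follows from $i=5$ by the symmetry~\eqref{eq:symmetry}.

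The case $i=1$ is handled separately, starting from $V_{1,0}'$ as provided by Lemma~\ref{lem:adjust_V1}. Since $\Hs_1'$ has a term quadratic in $\e_8$, rather than integrate $\e_8$ out directly we perform the change of variables $\e_8=\frac{B}{\e_2\e_3\e_4\e_5\e_6}x_0$, $\e_1=\frac{B}{\e_2\e_3^2\e_4^2\e_5^2\e_6^2}x_2$ from the proof of Lemma~\ref{lem:adjust_V1}. Its Jacobian turns $\frac{\diff\e_1\diff\e_8}{|\e_1|}$ into $\frac{B\,\diff x_0\diff x_2}{|\e_2\e_3\e_4\e_5\e_6|\,|x_2|}$, and three of the five inequalities in $\Hs_1'\le B$ become $|x_0|\le 1$, $|x_2|\le 1$, $|x_0^2/x_2|\le 1$, while the other two read $|\e_3\e_4^2\e_5^4\e_6^3|\le B$ and $|\e_2\e_3^2\e_4^2\e_5^2\e_6^2|\le B$. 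The $(x_0,x_2)$-integral now factors off and equals $8$ by~\eqref{eq:int_x0x2}; applying $\e_j=\pm B^{t_j}$ for $j\in\{2,\dots,6\}$ to the rest yields $V_{1,0}'(B)=8\cdot 2^5\,B(\log B)^5\vol(P_1)=2^5C_1B(\log B)^5$ with $C_1=8\vol(P_1)$. Here only five of the coordinates contribute a factor $\log B$, the direction $\e_1$ having been absorbed together with $\e_8$ into the constant $8$; this matches $b_1-1=5$.

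It remains to evaluate the polytope volumes. For $i\in\{2,3,4\}$ the polytope $P_i$ is a product of simplices and for $i=5$ the volume reduces to a one-parameter iterated integral; these elementary computations give $C_4=2$, $C_3=\tfrac18$, $C_2=\tfrac1{32}$, and $C_5=\tfrac7{24}$. The one genuinely laborious point---and the main obstacle of the proof---is the evaluation of $C_1=8\vol(P_1)$: the two inequalities $t_2+2t_3+2t_4+2t_5+2t_6\le 1$ and $t_3+2t_4+4t_5+3t_6\le 1$ are binding on complementary parts of $\RRnn^5$, according to the sign of $t_3-2t_5-t_6$, so one splits $P_1$ along this hyperplane and integrates each piece separately, arriving at $\tfrac{13}{4320}$. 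This completes the proof.
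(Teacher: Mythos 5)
Your proposal is correct and is essentially the paper's own proof: in each case you integrate out $\e_8$ exactly (in case $1$ via the change of variables~\eqref{eq:change_of_coords}, with the $(x_0,x_2)$-integral equal to $8$ by~\eqref{eq:int_x0x2}) and then substitute $\e_j=\pm B^{t_j}$ to convert the remaining monomial height conditions into the stated polytope volumes, with matching powers of $2$ and $\log B$. The only divergence is that the paper evaluates the five-dimensional volume in $C_1$ by computer (Magma) while you sketch a hand computation; there the splitting quantity should be $t_2+t_3-2t_5-t_6$ (it becomes $t_3-2t_5-t_6$ only after first integrating out $t_2$), a harmless imprecision since the value $13/4320$ is correct.
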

\begin{proof}
  Again we apply the coordinate change~\eqref{eq:change_of_coords}, which shows that
  \begin{equation*}
    V_{1,0}'(B)=  \int_{\substack{
      |\e_2|, |\e_3|, |\e_4|, |\e_5|, |\e_6| \ge 1\\
      |\e_3 \e_4^2 \e_5^4 \e_6^3 |, |\e_2 \e_3^2 \e_4^2 \e_5^2\e_6^2|\le B
    }}
    \frac{B\diff \e_2 \diff \e_3 \diff \e_4 \diff \e_5 \diff
      \e_6}{|\e_2\e_3\e_4\e_5\e_6|}\cdot
    \int_{|x_0|,|x_2|,|x_0^2/x_2| \le 1} \frac{\diff x_0 \diff x_2}{|x_2|}
    \text{.}
  \end{equation*}
  The integral over $x_0,x_2$ is $8$ by~\eqref{eq:int_x0x2}.
  Restricting to positive $\e_i$ introduces a factor of $2^5$. Substituting
  $\e_i = B^{t_i}$ turns $\diff\e_i/\e_i$ into $\log B \diff t_i$, and we thus
  arrive at
  \begin{equation*}
    V_{1,0}'(B)  =2^5 \int_{\substack{
        t_2,t_3,t_4,t_5,t_6 \ge 0\\
        t_3 + 2 t_4 + 4 t_5 + 3 t_6 \le 1\\
        t_2 + 2 t_3 + 2 t_4 + 2 t_5 + 2 t_6 \le 1
      }} 8 B(\log B)^5\diff t_2 \diff t_3 \diff t_4 \diff t_5 \diff t_6 .
  \end{equation*}
  This integral can be interpreted as the volume of a polytope, which we
  compute using Magma.

  For the second case, using the first height condition yields
  \begin{align*}
    V_{2,0}(B) &= 2\int_{|\e_1|,|\e_2|,|\e_5|,|\e_7| \ge 1,\
                 |\e_1^2\e_2^2|,|\e_5^4\e_7^2|\le B} \frac{B \diff\e_1
                 \diff\e_2 \diff\e_5 \diff\e_7}{|\e_1\e_2\e_5\e_7|}.
  \end{align*}
  We proceed as in the first case; here, we can compute the volume by
  hand. The final two cases are analogous.

  For the third case, 
  we observe that $|\e_1\e_2\e_5^2|$ can be ignored in the definition of $\Hs_3'$
  since it is the geometric average of $|\e_1^2\e_2^2|$ and $|\e_5^4|$. Now
  the computation is very similar to the second case.
\end{proof}

Plugging this into Lemma~\ref{lem:surface-result-of-count} (after applying
Lemma~\ref{lem:adjust_V1} and Lemma~\ref{lem:adjust_V3} in the first and third
case) completes the proof of Theorem~\ref{thm:main_concrete}.

\section{The leading constant}\label{sec:leading-constant}

We show that Theorem~\ref{thm:main_concrete} can be abstractly formulated as
Theorem~\ref{thm:main_abstract}.
Part of the leading constants~\eqref{eq:c_fin} are $p$-adic Tamagawa volumes
$\tau_{(\tS,D_i),p}(\tUs_i(\ZZ_p))$ as defined
in~\cite[2.1.10, 2.4.3]{MR2740045}.  These measures are similar to the usual
Tamagawa volumes studied in the context of rational points, except for factors
$\norm{1_{D_i}}_p$ that are constant and equal to $1$ on the set of $p$-adic integral
points at almost all places (in fact, at all finite places in our cases). Over
the reals, the analogous volumes, when evaluated on the full space of real points, would
be infinite. Instead, \emph{residue measures} $\tau_{i,D_A,\infty}$ supported
on minimal strata $D_A(\RR)$ of the boundary divisors appear in the leading
constant~\eqref{eq:c_infty}, cf.~\cite[2.1.12]{MR2740045}. These can be interpreted as a density
function for the set of integral points (100\% of which are in arbitrarily
small real-analytic neighborhoods of the boundary; hence, a density function
has to be supported on the boundary), cf.\ \cite[3.5.8]{MR2999313}, or the
leading constant of an asymptotic expansion of the volume of \emph{height
  balls} with respect to $\tau_{(\tS,D_i),\infty}$,
cf.~\cite[Theorem~4.7]{MR2740045}.

In addition, we have to compute factors $\alpha_{i,A}$ as in~\eqref{eq:alpha} (cf.~\cite{wilsch-toric}), similar to Peyre's in the case of rational
points~\cite{MR2019019}. Again, there is one of these factors associated with any
minimal stratum $A$ of the boundary.

\smallskip

In order to compute the Tamagawa volumes, we work with the chart
\begin{align*}
  f\colon V'=\tS \setminus \VV(\e_1\e_2\e_3\e_4\e_5\e_6) &\to \AAA^2_\QQ,\\
  (\e_1:\e_2:\e_3:\e_4:\e_5:\e_6:\e_7:\e_8:\e_9)
  &\mapsto
  \left(\e_7 \cdot \frac{\e_5^2\e_6}{\e_1\e_2\e_3},\e_8\cdot \frac{1}{\e_1\e_3\e_4\e_5\e_6}\right)
\end{align*}
and its inverse $g\colon \AAA^2_\QQ \to \tS$,
\[
  (x,y) \mapsto
  (1:1:1:1:1:1:x:y:-x-y)\text{.}
\]
Note that the two elements
\[
\e_7 \cdot \frac{\e_5^2\e_6}{\e_1\e_2\e_3}\quad \text{and}\quad \e_8\cdot \frac{1}{\e_1\e_3\e_4\e_5\e_6}
\]
have degree $0$ in the field of fractions of the Cox ring. The rational map they define is thus invariant under the torus action and descends to $\tS$.
\begin{lemma}\label{lem:finite-volumes-sets}
  The images of the sets of $p$-adic integral points are 
  \begingroup
  \allowdisplaybreaks{}
    \begin{align*}
      f(\tUs_1(\ZZ_p)\cap V'(\QQ_p))
      &= \{(x,y) \in \QQ_p^2 \mid |x| \ge 1 \text{ or } |xy^2|\ge 1 \}, \\
      f(\tUs_2(\ZZ_p)\cap V'(\QQ_p))
      &= \{(x,y) \in \QQ_p^2 \mid |y| \le 1\text{ or } |xy^2| \le 1 \text { or } |x+y|\le 1\} \\
      &=\{|y|\le 1\} \cup \{|y| > 1,\ |xy^2| \le 1\} \cup \{|y| > 1,\ |x+y|\le 1\},\\
      f(\tUs_3(\ZZ_p)\cap V'(\QQ_p))
      &= f(\tUs_1(\ZZ_p)\cap V'(\QQ_p)) \cap f(\tUs_2(\ZZ_p)\cap V'(\QQ_p))\\
      &=\{|y|\le 1,\ |x|\ge 1\} \cup \{|y| > 1,\ |xy^2|=1\} \cup \{|y| > 1,\ |x+y|\le 1\},\\
      f(\tUs_4(\ZZ_p)\cap V'(\QQ_p)) 
      &= \{\abs{x}\ge 1, \abs{y}\le 1\} \cup \{\abs{y}>1, \abs{x+y}\le 1\}, \qquad \text{and} \\
      f(\tUs_5(\ZZ_p)\cap V'(\QQ_p)) 
      &= \{\abs{x},\abs{y}\le 1\} \cup \{\abs{y}>1, \abs{xy^2}\le 1\} \cup \{\abs{y}>1, \abs{x+y}\le 1\}.  
    \end{align*}
  \endgroup
  Here, the unions are disjoint.
\end{lemma}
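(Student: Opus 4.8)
The plan is to unwind the torsor description of $\tUs_i(\ZZ_p)$ from Lemma~\ref{lem:torsor} through the explicit chart $f$. A point of $V'(\QQ_p)$ has, via $g$, a representative $\ee$ with $\e_1=\dots=\e_6=1$, $\e_7=x$, $\e_8=y$, $\e_9=-x-y$; conversely, an arbitrary $p$-adic torsor point over such a point can be rescaled by the $\GGmZ^6$-action so that all of $\e_1,\dots,\e_6$ are $p$-adic units, and the coordinates $\e_7,\e_8,\e_9$ are then determined up to a unit and the torsor equation. First I would record, for each $i$, what ``$\ee$ is $p$-integral and satisfies the coprimality conditions~\eqref{eq:surface-gcd}'' translates to once we have normalized $|\e_1|=\dots=|\e_6|=1$: the only coprimality constraints that survive involve the pairs among $\{E_7,E_8,E_9\}$ that are non-adjacent in Figure~\ref{fig:dynkin-diagram}, namely $\{E_8,E_9\}$, i.e.\ $\min\{|\e_8|,|\e_9|\}$ must be a unit; plus the $\pm1$-conditions~\eqref{eq:pm1} forcing $|\e_j|=1$ for $E_j\subset|D_i|$. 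After scaling the unit ambiguity away, $\e_7=x$, $\e_8=y$, $\e_9=-(x+y)$, but one must remember that $p$-integrality of the torsor point allows $\e_7,\e_8$ (and $\e_9$) to be cleared of common denominators only up to the torus action, so the actual conditions become inequalities of the form ``$|x|\le 1$ \emph{or} $\dots$''.

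Concretely, the key step is: $\ee$ with $|\e_1|=\dots=|\e_6|=1$ lies in $\tUs_i(\ZZ_p)$ iff, writing $\e_7,\e_8$ for the affine coordinates and $\e_9=-\e_7-\e_8$ (here $|x|=|\e_7|$ after the identification $f(\ee)=(x,y)$ is $(\e_7\e_5^2\e_6/\e_1\e_2\e_3,\e_8/\e_1\e_3\e_4\e_5\e_6)=(\e_7,\e_8)$ in the normalization), one has (a) $\min\{|\e_8|,|\e_9|\}\le 1$ always — wait, rather the $p$-integrality of $\Ys_i$ forces the monomials cutting out the relevant $E_j$'s to not all be divisible by $p$. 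I would phrase it via the sections: $E_7=\VV(\e_7)$, $E_8=\VV(\e_8)$, $E_9=\VV(\e_9)$ are cut out, in the chart, by $x$, $y$, $x+y$ respectively (up to units); and the exceptional locus $\VV(I_\irr)$ restricted to the chart is $\VV(\e_8,\e_9)=\VV(y,x+y)$ together with lower-dimensional pieces automatically avoided once $\e_1,\dots,\e_6$ are units. So $\tUs_i(\ZZ_p)\cap V'(\QQ_p)$, for each $i$, is the set of $(x,y)$ such that after clearing denominators the resulting $p$-integral torsor point avoids $\overline{D_i}$ mod $p$; this amounts to: for $i=1$, $E_7\not\subset$, i.e.\ $|x|\ge 1$ \emph{or} (if denominators of $x$ are absorbed into other variables) $|xy^2|\ge1$ — the second alternative arising precisely because rescaling to make $\e_7$ integral may force using $\e_1$, and tracking the degree $\deg\e_7=\ell_0-\ell_1-\ell_2-\ell_3$ against $\deg\e_9=\ell_0-\ell_5$ gives the monomial $xy^2$ (matching $\abs{\e_7\e_8^2}$ after normalization). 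I would carry this out case by case: $i=1$ gives ``$|x|\ge1$ or $|xy^2|\ge1$''; $i=2$, where $E_3,E_4,E_6\subset D_2$ must stay integral and $E_3,E_4,E_6$ meet $E_5,E_7$, yields the three alternatives $|y|\le1$, $|xy^2|\le1$, $|x+y|\le1$; $i=3$ is the intersection $\tUs_1\cap\tUs_2$ as noted in the text; $i=4$ ($D_4=E_3+\dots+E_7$) and $i=5$ ($D_5=E_2+E_3+E_4+E_6$) similarly.

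Then I would verify the reformulations into disjoint unions. For $i=2$: split on $|y|\le1$ versus $|y|>1$; in the latter regime $|x+y|\le1$ forces $|x|=|y|>1$, so $|xy^2|>1$, making ``$|xy^2|\le1$'' and ``$|x+y|\le1$'' mutually exclusive when $|y|>1$, giving the stated three-way disjoint union. For $i=3$: intersecting the two descriptions, in the region $|y|>1$ the $i=1$ condition ``$|x|\ge1$ or $|xy^2|\ge1$'' combined with $i=2$'s ``$|xy^2|\le1$'' collapses to $|xy^2|=1$ (the alternative $|x+y|\le1$ survives as its own branch, where automatically $|x|\ge|y|>1$ so the $i=1$ condition holds); in the region $|y|\le1$ one needs $|x|\ge1$. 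For $i=5$, split on $|y|\le1$ (then $|x|\le1$ forced by... actually one checks ``$|x|,|y|\le1$'' is the $|y|\le1$ part) versus $|y|>1$ with the two remaining alternatives, which are disjoint for the same reason as in case $2$. Disjointness in each case is the short residual computation; the main obstacle is the bookkeeping in the first paragraph — correctly extracting, from the $\GGmZ^6$-action and the grading~\eqref{eq:Pic_degrees}, which monomial alternatives appear (e.g.\ why $|xy^2|$ and not some other monomial), which is exactly the content of matching the torsor-normalization against the divisor classes $\deg\e_7,\deg\e_8,\deg\e_9$ on the chart $V'$ where $\e_1,\dots,\e_6$ are units.
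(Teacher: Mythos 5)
Your central normalization step is where the argument breaks. You claim that an arbitrary integral torsor point above a point of $V'(\QQ_p)$ can be rescaled by the $\GGmZ^6$-action so that all of $\e_1,\dots,\e_6$ are $p$-adic units. The $\ZZ_p$-points of the torus act through units only, so the action cannot change valuations: whenever a point of $\tUs_i(\ZZ_p)$ reduces modulo $p$ onto some divisor $E_j$ with $1\le j\le 6$ and $E_j\not\subset D_i$, every integral lift has $p\mid\e_j$. These are precisely the points the lemma is about (for instance every point with $\abs{x}<1$ or $\abs{y}>1$), so the bookkeeping you build on this normalization collapses. Relatedly, your inventory of the surviving coprimality conditions is wrong: $E_7,E_8,E_9$ are pairwise adjacent in Figure~\ref{fig:dynkin-diagram}, so \eqref{eq:surface-gcd} imposes no condition on $\gcd(\e_8,\e_9)$ at all; the conditions that actually drive the computation pair $\e_8,\e_9,\e_1,\e_2$ against $\e_3,\dots,\e_6$. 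For example, the necessity of ``$\abs{x}\ge 1$ or $\abs{xy^2}\ge 1$'' for $i=1$ comes from: $\abs{x}<1$ together with $\e_7\in\ZZ_p^\times$ forces $\e_5$ or $\e_6$ to be a nonunit, whence $\e_8\in\ZZ_p^\times$ by coprimality, and then $\abs{xy^2}=\abs{\e_7\e_8^2/\e_1^3\e_2\e_3^3\e_4^2\e_6}\ge 1$. Your alternative derivation of the monomial $xy^2$ by ``tracking degrees'' is not an argument one could check.

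The other inclusion --- that every $(x,y)$ in the stated sets is actually the image of a point of $\tUs_i(\ZZ_p)$ --- is not addressed beyond assertion, and it is the bulk of the work. One must exhibit, in each valuation regime, an integral point on the torsor above $(x,y)$ satisfying \eqref{eq:surface-gcd} and the unit conditions of Lemma~\ref{lem:torsor} (e.g.\ for $i=1$, $\abs{x}<1$ and $1/\abs{x}^{1/2}\le\abs{y}<1/\abs{x}$ one can take $\e_5=xy$, $\e_6=1/xy^2$, $\e_9=-1-x/y$ and the remaining coordinates $1$, and then verify coprimality), or equivalently determine where the unique $\ZZ_p$-point of the proper model through $g(x,y)$ reduces as a function of $v(x),v(y),v(x+y)$; either way this case analysis cannot be skipped, since it is exactly what rules in or out each region of the $(v(x),v(y))$-plane. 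Your closing verifications that the unions are disjoint are correct and agree with the paper, but they are the easy part of the lemma.
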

\begin{proof}
  Consider the image $(x,y)$ of an integral point
  $\pi(\e_1,\dots,\e_9) \in \tUs_1(\ZZ_p)$.
  Assume $|x|< 1$. Then $\e_5 \not\in\ZZ_p^\times$ or $\e_6 \not\in \ZZ_p^\times$ (since $\e_7\in\ZZ_p^\times$).
  In both cases, the coprimality conditions imply $\e_8\in\ZZ_p^\times$, and thus
  $|xy^2|=|\e_7\e_8^2/ \e_1^3 \e_2 \e_3^3 \e_4^2 \e_6 | \ge 1$.

  On the other hand, let us consider a point $(x,y)$ in the above set and
  construct an integral point $(\e_1,\dots,\e_9)$ on the torsor with $f(\pi(\e_1,\dots,\e_9))=(x,y)$.
  If $|x| < 1$, we distinguish two cases for $|y|$:
  \begin{enumerate}[label = (\roman*)]
    \item If $1/|x|^{1/2} \le |y| < 1/|x|$, let $\e_5=xy$, $\e_6=1/xy^2$, $\e_9=-1-x/y$, and the remaining coordinates be $1$. Then $\e_9 \in -1 + p\ZZ_p\subset \ZZ_p^\times$ since $|x/y|\le |x|^{1/2} < 1$, and thus the coprimality conditions are satisfied.
    \item If $1/|x|\le |y|$, let $\e_4=1/xy$, $\e_6=x$, $\e_9=-1-x/y$, and let all the other coordinates be $1$.
    Since $|x/y| \le |x|^2 < 1$, we again have $\e_9 \in -1+p\ZZ_p\subset \ZZ_p^\times$, and thus the coprimality conditions hold.
  \end{enumerate}
  If $|x| \ge 1$, we distinguish three cases for $|y|$.
  \begin{enumerate}[label = (\roman*)]
    \item If $|y| < 1$, let $\e_2=1/x$, $\e_8=y$, $\e_9=-1-y/x$, and the remaining coordinates be $1$. Then $\e_9\in-1+p\ZZ_p \subset \ZZ_p^\times$, since $|y/x| < 1$.
    \item If $1 \le |y| < |x|$, let $\e_3=1/y$, $\e_2=y/x$, $\e_9=-1-y/x$, and the remaining coordinates be $1$. Again, we have $|y/x| < 1$, so that $\e_9\in-1+p\ZZ_p\subset \ZZ_p^\times$.
    \item Finally, if $|x|\le |y|$, let $\e_3=1/x$, $\e_4=x/y$, $\e_1=-1-x/y$, and the remaining coordinates be $1$. If $|y| > |x|$, we have $\e_1\in-1+p\ZZ_p\subset \ZZ_p^\times$; if $|x|=|y|$, we have $\e_4\in\ZZ_p^\times$.
    In both cases, the coprimality conditions on the torsor are satisfied.
  \end{enumerate}

  \smallskip

  We now turn to $\tUs_2$. Let $(x,y)$ be in the image of the set of integral points. If $|y|>1$, we have either $|\e_5|<1$ or $|\e_1|<1$. In the first case, we get
  $|xy^2|=|\e_7\e_8^2/\e_1^3\e_2|=|\e_7|\le 1$ (since all other variables have to be units);
  for the second case, we note that
  \[
    x+y = \frac{\e_4\e_5^3\e_6^2\e_7 + \e_2\e_8}{\e_1\e_2\e_3\e_4\e_5\e_6}
    = - \frac{\e_1\e_9}{\e_1\e_2\e_3\e_4\e_6},
  \]
  and thus $|x+y|=|\e_9|\le 1$ (since all other variables have to be units).

  On the other hand, let $(x,y)$ be in the set on the right hand side
  in the statement of the lemma. We want to construct an integral
  point on the torsor lying above $(x,y)$.  If $|y|\le 1$ and
  $|x|\le1$, let $\e_8=y$, $\e_7=x$, $\e_9=-x-y$, and the remaining
  variables be $1$, which satisfies the coprimality-conditions.  If
  $|y|\le 1$ and $|x|>1$, let $\e_8=y$, $\e_2=1/x$, $\e_9=-1-y/x$, and
  the remaining variables be $1$. Then
  $\e_9\in-1-p\ZZ_p\subset\ZZ_p^\times$, so $(\e_1,\dots,\e_9)$ is
  integral.  Let now $|y|>1$. If $|xy^2|\le 1$, let $\e_5=1/y$,
  $\e_7=xy^2$, $\e_9=-1-xy$, and the remaining variables be $1$;
  again, $\e_9\in\ZZ_p^\times$.  Finally, if $|x+y|\le 1$, let
  $\e_1=1/x$, $\e_9=-x-y$, $\e_8=-\e_1\e_9-1$, and the remaining
  variables be $1$.  Then $\e_8\in\ZZ_p^\times$, so
  $(\e_1,\dots,\e_9)$ is integral, and, since
  $\e_8/\e_1=(-\e_1\e_9-1)/\e_1 = x+y-x=y$, it indeed lies above
  $(x,y)$.
  For the disjoint union description of $\tUs_2$, we just have to observe that $|y|>1$ and
  $|xy^2| \le 1$ implies $|x|=|y|^{-2}<1$, while $|y|>1$ and $|x+y|\le 1$
  implies $|x|=|y|>1$.

  The third set consists of points that are integral with respect to
  both $Q_1$ and $Q_2$. Therefore, we obtain it as the intersection of the
  previous two sets. For the description of $\tUs_3$ as a disjoint union, we start with
  the one of $\tUs_2$ and intersect each set with
  $\tUs_1$. Here, $|y| \le 1$ implies $|x|\ge 1$ since otherwise
  $|xy^2|<1$. Furthermore, $|y|>1$ and $|xy^2|\le 1$ implies $|x|<1$;
  hence, $|xy^2|\ge 1$ must hold. Finally, $|y|>1$ and $|x+y|\le 1$
  implies $|x|=|y|>1$.

  The final two cases are analogous.
\end{proof}

\begin{lemma}
  Let $v$ be a place of $\QQ$. For the measures $\tau_{(\tS,D_i),v}$ defined in~\cite[2.4.3]{MR2740045}, we have
  \begingroup
  \allowdisplaybreaks{}
  \begin{align*}
    \diff f_*\tau_{(\tS,D_1),v} &=
    \frac{1}{|x|\max\{|y|,1,|x|,|y(y+x)|\}} \diff x \diff y, \\
    \diff f_*\tau_{(\tS,D_2),v} &=
    \frac{1}{\max\{|xy|,1,|x^2|\}} \diff x \diff y,  \\
    \diff f_*\tau_{(\tS,D_3),v} &=
    \frac{1}{|x|\max\{|y|,1,|x|,M(x,y)\}} \diff x \diff y,\\
    \diff f_*\tau_{(\tS,D_4),v} &=
    \frac{1}{\abs{x}\max\{\abs{y}, 1, \abs{x}\}} \diff x \diff y, \qquad \text{and}\\
    \diff f_*\tau_{(\tS,D_5),v} &=
    \frac{1}{\max\{\abs{xy}, 1, \abs{x}\}} \diff x \diff y,
  \end{align*}
  \endgroup
  where
  \[
    M(x,y) = \min\left\{
      \frac{\abs{y(x+y)}}{\abs{x^3}},
      \frac{\abs{x+y}}{\abs{x}},
      \abs{y(x+y)},
      \abs{x^{-1}}\right\},
  \]
  and all absolute values are $\abs{\cdot}=\abs{\cdot}_v$.
\end{lemma}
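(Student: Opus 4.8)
Here is how I would prove the final lemma.

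The plan is to express $\tau_{(\tS,D_i),v}$ in the affine coordinates $(x,y)$ of the chart and read off the density along the inverse parametrization $g\colon(x,y)\mapsto(1:\dots:1:x:y:-x-y)$. The starting point is to unwind the definition of the measure from~\cite[2.4.3]{MR2740045}: in any analytic chart it equals $\|\omega_0^\vee\|_v\cdot|\omega_0|_v$, where $\omega_0$ is any local section of $\omega_{\tS}(D_i)$, $|\omega_0|_v$ is its associated density ($|\phi|_v\,\diff x\diff y$ when $\omega_0=\phi\,\diff x\wedge\diff y$), $\omega_0^\vee\in\omega_{\tS}(D_i)^\vee$ is the dual section, and $\|\cdot\|_v$ is the metric on $\omega_{\tS}(D_i)^\vee$ (the one defined by $M_i$ via Lemma~\ref{lem:big-and-nef} for $i\ne 3$, and the induced tensor metric on $\Ls_1\otimes\Ls_2^\vee$ for $i=3$); this product is independent of the choice of $\omega_0$. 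Since $f$ is an isomorphism $V'\xrightarrow{\sim}\AAA^2$ with inverse $g$ — in fact an isomorphism of $\ZZ$-schemes $V'_\ZZ\xrightarrow{\sim}\AAA^2_\ZZ$, because $\deg\e_1,\dots,\deg\e_6$ is a $\ZZ$-basis of $\Pic\tS$ — computing $f_*\tau_{(\tS,D_i),v}$ on $V'(\QQ_v)$ reduces to evaluating $\|\omega_0^\vee\|_v$ and $|\omega_0|_v$ along $g$. On $V'$ the components $E_1,\dots,E_6$ of the various $D_i$ are empty, and the only component of any $D_i$ meeting $V'$ is $E_7=\{x=0\}$ (recall $x=\e_7$); so I may take $\omega_0=x^{-m_i}\,\diff x\wedge\diff y$ with $m_i=1$ for $i\in\{1,3,4\}$ and $m_i=0$ for $i\in\{2,5\}$, and then $|\omega_0|_v=|x|_v^{-m_i}\,\diff x\diff y$, which already accounts for the factors $1/|x|$ in the claimed formulas.

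The next step computes $\|\omega_0^\vee\|_v$ along $g$. For $i\in\{1,2,4,5\}$, using $\omega_0^\vee$ itself as the reference section, $\|\omega_0^\vee\|_v$ at $g(x,y)$ equals $\bigl(\max_{m\in M_i}|m/\omega_0^\vee|_v\bigr)^{-1}$ evaluated at $g(x,y)$, and I claim that the rational function $m/\omega_0^\vee$, evaluated at $g(x,y)$, is (up to sign) the monomial $m$ with $\e_j=1$ ($1\le j\le 6$), $\e_7=x$, $\e_8=y$, $\e_9=-x-y$ substituted. Indeed, writing $m_\star$ for the unique element of $M_i$ free of $\e_7,\e_8,\e_9$, the restriction of $m_\star$ to $V'$ generates $\omega_{\tS}(D_i)^\vee|_{V'}$, so $m_\star/\omega_0^\vee=\langle m_\star,\omega_0\rangle$ is a nowhere-vanishing regular function on $V'\cong\AAA^2$; being defined over $\ZZ$, it lies in $\ZZ[x,y]^\times=\{\pm1\}$, hence has absolute value $1$ at every $v$, and $m/\omega_0^\vee=(m/m_\star)\cdot\langle m_\star,\omega_0\rangle$ then evaluates at $g(x,y)$ to $\pm$ the substituted monomial (using $m_\star(g(x,y))=1$). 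The substitution sends $M_1$ to $\{y,1,x,-y(x+y)\}$, $M_2$ to $\{xy,1,x^2\}$, $M_4$ to $\{y,1,x\}$, and $M_5$ to $\{xy,1,x\}$; combined with the first paragraph (and $|{-y(x+y)}|=|y(y+x)|$) this yields the first, second, fourth, and fifth formulas.

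For $i=3$ the same argument with the tensor metric — choosing local generators of $\Ls_1,\Ls_2$ on $V'$ compatible with $\omega_0^\vee$ — gives $\|\omega_0^\vee\|_v$ at $g(x,y)$ equal to $\widetilde H_3(g(x,y))^{-1}$, where $\widetilde H_3(g(x,y))=\max_{s\in L_1}|s|_v/\max_{s\in L_2}|s|_v$ evaluated under the same substitution, which is exactly the quantity appearing in the proof of Lemma~\ref{lem:heights}. Hence $\diff f_*\tau_{(\tS,D_3),v}=\frac{1}{|x|_v\,\widetilde H_3(g(x,y))}\,\diff x\diff y$, and it remains to identify $\widetilde H_3(g(x,y))$ with $\max\{|y|_v,1,|x|_v,M(x,y)\}$. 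Using $L_1=L_2L_3\cup\{\e_1^3\e_2^3\e_3^2\e_4\e_8\e_9\}$ as in that proof, $\widetilde H_3=\max\{\max_{s\in L_3}|s|_v,\ |\e_1^3\e_2^3\e_3^2\e_4\e_8\e_9|_v/\max_{s\in L_2}|s|_v\}$; the substitution sends $\max_{L_3}$ to $\max\{|y|,1,|x|\}$, the extra monomial to $|y(x+y)|$, and $\max_{L_2}$ to $\max\{|xy|,1,|x|,|x^2|,|xy(x+y)|,|x^3|\}$. Writing the second term as the minimum of the six quotients $|y(x+y)|/a$ over $a$ in that set, the two with $a\in\{|x|,|x^2|\}$ are redundant — each is $\ge|y(x+y)|/|x^3|$ when $|x|\ge1$ and $\ge|y(x+y)|$ when $|x|\le1$ — so deleting them leaves precisely $M(x,y)$, which finishes the proof.

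I expect the main obstacle to be exactly the normalization appearing in the last two paragraphs: verifying that the Cox-monomial generator $m_\star|_{V'}$ of $\omega_{\tS}(D_i)^\vee|_{V'}$ and the generator dual to $x^{-m_i}\,\diff x\wedge\diff y$ agree up to a sign (so no spurious constant enters the density), and, for $i=3$, arranging compatible local trivializations of $\Ls_1$ and $\Ls_2$ so that the ratio $\max_{L_1}/\max_{L_2}$ is literally $\widetilde H_3$ along $g$. Both reduce to using the $\ZZ$-structure of the chart and the Cox ring; everything else is a direct substitution and, for $D_3$, the short case analysis on $|x|$.
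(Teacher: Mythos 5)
Your proposal is correct in substance and follows essentially the same route as the paper: express the measure locally as (inverse norm of a log-canonical generator) times the Lebesgue density, compute the norm via the metrics coming from the monomial sets $M_i$ (resp.\ $L_1,L_2$ for $i=3$), and evaluate along the section $g(x,y)=(1:\dots:1:x:y:-x-y)$. Your substitutions, the bookkeeping of which boundary components meet $V'$ (only $E_7=\{x=0\}$, giving the factors $1/\abs{x}$), and the reduction of $M_0(g(x,y))$ to $M(x,y)$ by discarding the two redundant quotients are all accurate; the paper does the same computation, just phrased with the full Cox-coordinate expression of $\norm{(\diff x\wedge\diff y)\otimes 1_{D_i}}$ before specializing.

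The one step that is not yet a proof is the normalization you flag yourself: the claim that $m_\star/\omega_0^\vee$ ``is defined over $\ZZ$, hence lies in $\ZZ[x,y]^\times=\{\pm 1\}$.'' The sections in $M_i$ are elements of the Cox ring, so they define a metric on a line bundle merely \emph{isomorphic} to $\omega_{\tS}(D_i)^\vee$; to get the measures $\tau_{(\tS,D_i),v}$ at an individual place one must fix the isomorphism, and the local density genuinely depends on that choice up to the factor $\abs{c}_v$ of a rational scalar $c$ (only the product over all places is choice-independent). So ``defined over $\ZZ$'' is not automatic: it presupposes that the isomorphism is chosen compatibly with the integral models, which is precisely the choice the paper makes explicitly by declaring $\diff x\wedge\diff y\mapsto 1/(\e_1^2\e_2^2\e_3^3\e_4^2\e_6)$ and $1_{E_j}\mapsto\e_j$. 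Under that choice your argument closes with constant exactly $1$ rather than just $\pm1$: for instance for $i=1$ one computes $\omega_0^\vee=x\,(\diff x\wedge\diff y)^\vee\mapsto\e_1\e_2\e_3^2\e_4^2\e_5^2\e_6^2=m_\star$, and similarly $\omega_0^\vee\mapsto m_\star$ in the cases $i=2,4,5$ and $\omega_0^\vee\mapsto\e_1\e_2\e_3\e_4\e_5^2\e_6\in L_3$ for $i=3$, which also legitimizes identifying $\norm{\omega_0^\vee}^{-1}$ with $\widetilde H_3\circ g$ in the third case. So the argument is sound once you either make the paper's explicit choice of isomorphism or state (and use) that the isomorphism is required to match the natural $\ZZ$-structures on $V'_\ZZ\cong\AAA^2_\ZZ$; as written, the unit claim assumes what it needs to prove.
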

\begin{proof}
  In the first case, we have
  \begin{equation}\label{eq:def-tau-S-D1}
    \diff f_*\tau_{(\tS,D_1),v} =
    \|(\diff x \wedge \diff y) \otimes 1_{E_7} \|_{\omega_{\tS}(D),v}^{-1} \diff x \diff y.
  \end{equation}
  To make sense of this, we need a metric on the log-canonical bundle,
  not just on a line bundle isomorphic to it. To this end, we consider the
  isomorphism between the canonical bundle $\omega_{\tS}$ and the line bundle
  whose meromorphic sections are elements of degree $\omega_{\tS}$ of
  the field of fractions of Cox ring that maps $\diff x \wedge \diff y$ to
  $1/\e_1^2\e_2^2\e_3^3\e_4^2\e_6$; in addition, we consider the isomorphisms
  between $\Os(E_i)$ and the line bundles whose sections are elements of the
  Cox ring mapping $1_{E_i}$ to $\e_i$. Together, these induce an isomorphism
  from each $\omega_{\tS}(D_1)$ to the line bundle whose sections are
  functions of the Cox ring of degree $\omega_{\tS}(D_1)$, and we can pull
  back the adelic metric we constructed along this isomorphism (and similarly
  for the log-canonical bundles in the remaining cases).  In Cox coordinates,
  the norm in~\eqref{eq:def-tau-S-D1} at a point $\ee$ is
  \begin{equation}\label{eq:norm-dx-dy-1E7}
    \frac{|\e_1^2\e_2^2\e_3^3\e_4^2\e_6|}{|\e_7| \max\{|\e_2\e_3\e_4\e_5\e_6\e_8|,|\e_1\e_2\e_3^2\e_4^2\e_5^2\e_6^2|,|\e_3\e_4^2\e_5^4\e_6^3\e_7|,|\e_8\e_9|\}}.
  \end{equation}

  In the second case, we can analogously determine the norm
  \[
  \|(\diff x \wedge \diff y) \otimes 1_{E_3} \otimes 1_{E_4} \otimes 1_{E_6}\|_{\omega_{\tS}(D_2),v}^{-1}
  \]
  in Cox coordinates:
  \begin{equation}\label{eq:norm-dx-dy-case-2}
    \frac{1}{|\e_3\e_4\e_6|}
    \frac{|\e_1^2\e_2^2\e_3^3\e_4^2\e_6|}{ \max\{|\e_2\e_5\e_7\e_8|,|\e_1^2\e_2^2\e_3^2\e_4|,|\e_4\e_5^4\e_6^2\e_7^2|\}}.
  \end{equation}

  In the third case, the norm
  \[
  \|(\diff x \wedge \diff y) \otimes 1_{E_3} \otimes 1_{E_4} \otimes 1_{E_6} \otimes 1_{E_7}\|_{\omega_{\tS}(D_3),v}^{-1}
  \]
  at a point $\ee$ in Cox coordinates is
  \begin{equation}\label{eq:norm-dx-dy-case-3}
    \frac{1}{|\e_3\e_4\e_6\e_7|}
    \frac{|\e_1^2\e_2^2\e_3^3\e_4^2\e_6|}{ \max\{|\e_2\e_5\e_8|,|\e_1\e_2\e_3\e_4\e_5^2\e_6|,|\e_4\e_5^4\e_6^2\e_7|, M_0(\ee)\}}
  \end{equation}
  with
  \begin{equation*}
    M_0(\ee) = \frac{\abs{\e_1^3 \e_2^3 \e_3^2 \e_4 \e_8 \e_9}}{\max_{s \in B} \{\abs{s(\ee)}\}}.
  \end{equation*}
  Then $M_0(g(x,y))=M(x,y)$ as above
  after removing terms that can never contribute to the minimum.
  
  In the remaining two cases, the norms of interest are
  \begin{align*}
    &\frac{ 1 }{ \abs{\e_3\e_4\e_5\e_6\e_7} }
    \frac{|\e_1^2\e_2^2\e_3^3\e_4^2\e_6|}{\max\{\e_2\e_8, \e_1\e_2\e_3\e_4\e_5\e_6, \e_4\e_5^3\e_6^2\e_7\}}
    \quad \text{and}\\
    & \frac{1}{\abs{\e_2\e_3\e_4\e_6}}
    \frac{|\e_1^2\e_2^2\e_3^3\e_4^2\e_6|}{\max\{\abs{\e_5\e_7\e_8},\abs{\e_1^2\e_2\e_3^2\e_4},\abs{\e_1\e_3\e_4\e_5^2\e_6\e_7}\}},
  \end{align*}
  respectively.
\end{proof}

\begin{lemma}\label{lem:surface-finite-volumes}
  Let $p$ be a finite prime. Then
  \begin{equation*}
    \tau_{(\tS,D_i),p}(\tUs_i(\ZZ_p))=
    \begin{cases}
      1+\frac{6-\#D_i}{p}, &i=1,2,4,5;\\
      1+\frac{2}{p}-\frac{1}{p^2}, &i=3.
    \end{cases}
  \end{equation*}
\end{lemma}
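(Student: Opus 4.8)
The plan is to realise each $\tau_{(\tS,D_i),p}(\tUs_i(\ZZ_p))$ as an explicit $p$-adic integral in the affine chart $f$, using the density formulas $\diff f_*\tau_{(\tS,D_i),v}$ and the descriptions of $f(\tUs_i(\ZZ_p)\cap V'(\QQ_p))$ from the two preceding lemmas. Since $\tS\setminus V'=\VV(\e_1\cdots\e_6)$ has codimension $1$, the set $\tUs_i(\ZZ_p)\setminus V'(\QQ_p)$ is contained in the $\ZZ_p$-points of a subscheme of relative dimension $\le 1$ over $\ZZ_p$, hence a null set for the (smooth, since $\|1_{D_i}\|_p\equiv 1$ there) measure $\tau_{(\tS,D_i),p}$ on $\tUs_i(\ZZ_p)$; therefore
\[
  \tau_{(\tS,D_i),p}(\tUs_i(\ZZ_p)) = \int_{f(\tUs_i(\ZZ_p)\cap V'(\QQ_p))} \diff f_*\tau_{(\tS,D_i),p},
\]
and it remains to evaluate the right-hand side piece by piece.

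For $i\in\{2,4,5\}$, the essential observation is that on each piece of the \emph{disjoint} decomposition of $f(\tUs_i(\ZZ_p)\cap V'(\QQ_p))$ in Lemma~\ref{lem:finite-volumes-sets} the value of $|x+y|$ is pinned down --- either $|x|\ne|y|$, so that $|x+y|=\max\{|x|,|y|\}$, or the piece is cut out by the inequality $|x+y|\le 1$ --- so the maximum in the denominator of the density collapses to a single monomial $|x|^{\alpha}|y|^{\beta}$ and the integral over that piece is a geometric series in $p$. Writing $|x|=p^a$, $|y|=p^b$ and using the elementary identities $\int_{|x|\ge1}|x|^{-2}\diff x=1$, $\int_{|y|>1}|y|^{-2}\diff y=1/p$, $\int_{|y|>1,\,|xy^2|\le1}\diff x\,\diff y=1/p$ (and the substitution $x\mapsto x+y$ on the piece $|x+y|\le1$), each $\tau$ comes out as a short sum; e.g.\ the three pieces for $D_5$ contribute $1$, $1/p$, $1/p$, giving $1+2/p=1+(6-\#D_5)/p$, and $D_2$, $D_4$ are analogous. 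For $i=3$ the same scheme applies once one substitutes $|x|=|y|^{-2}$ on the middle piece to see that the extra term $M(x,y)$ simplifies to $|y|^2$ there; the one genuinely new feature is that this middle piece is cut out by the \emph{equality} $|xy^2|=1$ rather than an inequality, so it carries a factor $(1-1/p)^2$ (two unit conditions) instead of $(1-1/p)$, which is exactly the origin of the $-1/p^2$ in $1+2/p-1/p^2$.

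The case $i=1$ is the main obstacle: $f(\tUs_1(\ZZ_p)\cap V'(\QQ_p))=\{|x|\ge1\}\cup\{|xy^2|\ge1\}$ is not presented as a disjoint union, and the term $|y(y+x)|$ in the denominator of the density genuinely depends on $v_p(x+y)$ along the diagonal $|x|=|y|$. I would decompose this region as the disjoint union of $\{|x|\ge1,\ |y|>|x|\}$, $\{|x|\ge1,\ |y|<|x|\}$, $\{|x|\ge1,\ |y|=|x|\}$ and $\{|x|<1,\ |xy^2|\ge1\}$; off the diagonal the density is again a monomial and the three contributions are routine ($1/p$, $2/p$, $2/p$). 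On the diagonal $|x|=|y|=p^a$ one integrates over $y$ with $x$ fixed by splitting according to $c=v_p(x+y)$, which requires the volume $\mathrm{meas}\{y:\ |y|=|x|=p^a,\ |x+y|=p^a\}=p^{a-1}(p-2)$; this produces an inner integral $G(a)=p^{-2a}(1-1/p)(a+1)$, and $\sum_{a\ge0} G(a)\,\mathrm{meas}\{|x|=p^a\}=1$, so the four pieces sum to $1+5/p$.

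As a consistency check one expects, and can verify, that all six values equal $\#\tUs_i(\FF_p)/p^{\dim\tS}=\#\tUs_i(\FF_p)/p^2$: the model $\tSs$ is smooth over $\ZZ$ (an iterated blow-up of $\PP^2_\ZZ$ in five points), so $\#\tSs(\FF_p)=p^2+p+1+5p=p^2+6p+1$, while $\overline{D_i}$ reduces to a reduced union of $\PP^1$'s meeting in the pattern of Figure~\ref{fig:dynkin-diagram}; a union with $v_i$ components and $e_i$ pairwise intersection points (and no triple points) has $v_i p+(v_i-e_i)$ points over $\FF_p$, and subtracting this from $p^2+6p+1$ reproduces $1+(6-\#D_i)/p$ for $i\in\{1,2,4,5\}$ and $1+2/p-1/p^2$ for $i=3$.
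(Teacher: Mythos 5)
Your proposal is correct and takes essentially the same route as the paper's proof: both evaluate $\tau_{(\tS,D_i),p}(\tUs_i(\ZZ_p))$ as the pushforward integral over $f(\tUs_i(\ZZ_p)\cap V'(\QQ_p))$ using the two preceding lemmas and compute it piecewise, your four-region splitting for $i=1$ (with the diagonal $|x|=|y|$ handled by summing over $v_p(x+y)$, using $\mathrm{meas}\{|y|=|x|,\,|x+y|=|x|\}=|x|(1-2/p)$) merely regrouping the paper's three integrals with values $2/p$, $1$, $3/p$, and the remaining cases matching the paper's piecewise evaluations. The closing consistency check against $\#\tUs_i(\FF_p)/p^2$ is a nice addition, and the only slight imprecisions (the maximum is a two-term rather than one-term expression on the piece $|y|\le 1$ for $i=2$, and the ``$(1-1/p)^2$'' heuristic for $i=3$) do not affect the correctness of the computations.
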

\begin{proof}
  We compute 
  \begin{equation}\label{eq:tau-p-abstract}
    \tau_{(\tS,D_i),p}(\tUs_i(\ZZ_p))=\int_{f(\tUs_i(\ZZ_p)\cap V'(\QQ_p))} \diff f_*\tau_{(\tS,D_i),p}
  \end{equation}
  for $i\in\{1,\dots,5\}$.
  For $i=1$, the previous two lemmas transform this into
  \[
    \int_{\substack{x,y\in\QQ_p\\|x|\ge 1 \text{ or }|xy^2| \ge 1}}
    \frac{1}{|x|\max\{|y|,1,|x|,|y(y+x)|\}} \diff x \diff y.
  \]
  Subdividing the domain of integration into the regions with $|x|>|y|$, $|x|=|y|$, and $|x|<|y|$ in order to simplify the denominator, we get
  \begin{equation}\label{eq:tau-p-divided}
    \begin{aligned}
      &\int_{\substack{|y|<|x|\\|x|\ge 1}} \frac{1}{|x|\max\{|x|,|xy|\}}\diff x \diff y
      + \int_{\substack{|y|=|x|\\|x|\ge 1}} \frac{1}{|x|\max\{|x|,|y(y+x)|\}} \diff x \diff y \\
      &\qquad +
      \int_{\substack{|x| < |y|\\ |xy^2| \ge 1}} \frac{1}{|xy^2|} \diff x \diff y
  \end{aligned}
  \end{equation}
  after simplifying the description of the domains ($|x|<1$ would imply $|xy^2| \le|x|^3<1$ in the first two cases;
  $|y|^2<1/|x|$ would imply $|y|^2 <1/|x| \le 1 \le |x|^2 < |y|^2 $ in the third case).

  The first of the integrals in~\eqref{eq:tau-p-divided} is
  \begin{align}
      &\int_{|x|\ge 1} \frac{1}{|x|^2}
      \int_{|y|<|x|} \frac{1}{\max\{1,|y|\}} \diff y \diff x
      = \int_{|x|\ge 1} \frac{1}{|x|^2}
      \left(\frac{1}{p} + \int_{1\le |y| < |x|} \frac{1}{|y|} \diff y\right)
      \diff x \nonumber\\
      & \quad = \int_{|x|\ge 1} \frac{1}{|x|^2}\left(\frac{1}{p} + \left(1-\frac{1}{p}\right)|v(x)| \right) \diff x
      = \frac{1}{p}
      + \sum_{\delta\ge 0} \left(1-\frac{1}{p}\right)^2\frac{\delta}{p^\delta} = \frac{2}{p} \text{,} \label{eq:tau-p-aux1}
  \end{align}
  while the second integral is
  \begin{equation}\label{eq:tau-p-aux2}
    \int_{\substack{|y+x|\le \frac{1}{p} \\ |x|\ge 1}} \frac{1}{|x|^2}
    + \int_{\substack{|y+x| \ge 1 \\ |x|\ge 1, |y| = |x|}} \frac{1}{|xy(x+y)|}.
  \end{equation}
  The first integral in~\eqref{eq:tau-p-aux2} is $\frac{1}{p}\int_{|x|\ge 1}\frac{1}{|x|^2}\diff x = \frac{1}{p}$. Turning to the second one, we note that $|x| = |y|$ is implied by the ultrametric triangle inequality if $|x+y|<|x|$.
  The set of $y\in\QQ_p$ with $|x+y|=|x|$ and $|y|=|x|$ has volume
  $|x|-2|x|/p$ since the two sets $\{y \mid |y-0|< |x|\}$ and $\{y \mid |y+x|<
  |x|\}$ have volume $|x|/p$ and are disjoint (because $|y|<|x|$ implies $|y+x|=|x|$). We thus get
  \begin{align*}
    &\int_{|x| \ge 1} \frac{1}{|x|^2}
    \left(\sum_{0\leq \delta< |v(x)|} \left(1-\frac{1}{p}\right)\frac{p^\delta}{p^\delta} + \left(1-\frac{2}{p}\right)\frac{|x|}{|x|}\right) \diff x \\
    & \quad = \int_{|x| \ge 1} \frac{1}{|x|^2}
    \left( \left(1-\frac{1}{p}\right)|v(x)| + \left(1-\frac{2}{p}\right) \right)\diff x = \frac{1}{p} + 1-\frac{2}{p} = 1 - \frac{1}{p}\text{,}
  \end{align*}
  computing the integral over $x$ similarly as in~\eqref{eq:tau-p-aux1}. The second integral in~\eqref{eq:tau-p-divided} thus evaluates to $1$.
  Finally, the third integral in~\eqref{eq:tau-p-divided} is
  \begin{align*}
    &\int \frac{1}{|y|^2} \int_{1/|y|^2 \le |x| < |y|} \frac{1}{|x|} \diff x \diff y
    = \int_{|y|\ge 1} \frac{1}{|y|^2} \sum_{-2|v(y)|\le \delta < |v(y)|} \left(1-\frac{1}{p}\right) \diff y \\
    & \quad = \int_{|y| \ge 1} \left(1-\frac{1}{p} \right) \frac{3|v(y)|}{|y|^2}
    = \frac{3}{p}\text{,}
  \end{align*}
  again computed analogously to the previous ones.
  Adding the three terms in $\eqref{eq:tau-p-divided}$, we arrive at
  our claim for $i=1$.

  \smallskip

  For $i=2$, we get
  \begin{align*}
    &\int_{|y|\le 1\text{, } |xy^2| \le 1 \text {, or } |x+y|\le 1}
    \frac{1}{|x|\max\{|y|,|x^{-1}|,|x|\}} \diff x \diff y
     \\
    & \quad = \int_{|y|\le 1} \frac{1}{\max\{1,|x^2|\}} \diff x \diff y
    + \int_{\substack{|y|> 1 \\ |x+y|\le 1}} \frac{1}{|y^2|} \diff x \diff y
    + \int_{\substack{|y|> 1 \\ |x|\le 1/|y|^2}} \diff x \diff y
  \end{align*}
  for the integral~\eqref{eq:tau-p-abstract} (since $|x|=|y|$ in the second case). The first integral is then
  \[
    1 + \int_{|x|>1}\frac{1}{|x^2|} \diff x = 1+\frac{1}{p},
  \]
  while the other two integrals are
  \[
    \int_{|y|>1}\frac{1}{|y^2|} \diff y = \frac{1}{p}.
  \]
  
  For $i=3$, we compute~\eqref{eq:tau-p-abstract} to be
  \begin{align*}
    &\int_{f(\tUs_3(\ZZ_p)\cap V'(\QQ_p))}
    \frac{1}{|x|\max\{|y|,1,|x|,M(g(x,y))\}} \diff x \diff y
     \\
    & \quad = \int_{|y|\le 1,\ |x| \ge 1} \frac{1}{|x^2|} \diff x \diff y
    + \int_{\substack{|y|> 1 \\ |x+y|\le 1}} \frac{1}{|y^2|} \diff x \diff y
    + \int_{\substack{|y|> 1 \\ |x|= 1/|y|^2}} \diff x \diff y
  \end{align*}
  (again using that $|x|=|y|$ in the second case). The first integral is then
  \[
    \int_{|x| \ge 1}\frac{1}{|x^2|} \diff x = 1,
  \]
  while the second one is
  \[
    \int_{|y|>1}\frac{1}{|y^2|} \diff y = \frac{1}{p},
  \]
  and the third one is 
  \begin{equation*}
    \left(1-\frac 1 p\right)\int_{|y|>1} \frac{1}{|y^2|} \diff y =
    \frac{1}{p} - \frac{1}{p^2}.
  \end{equation*}
  
  The final two cases are similar: For $i=4$, the integrals over the two disjoint sets in Lemma~\ref{lem:finite-volumes-sets} are $1$ and $p^{-1}$, respectively, while for $i=5$, the integrals over the three disjoint sets are $1$, $p^{-1}$, and $p^{-1}$, respectively.
\end{proof}

The remaining parts of the constant are associated with maximal faces of the
Clemens complex.  Recall from Section~\ref{sec:expected} and
Figure~\ref{fig:clemens_complex} that the Clemens complex of the geometrically
irreducible divisor $D_1$ consists of just one vertex
$E_7$.
For $D_2$, we have three vertices corresponding to its components, and two
$1$-simplices $A_1=\{E_3,E_4\}$ and $A_2=\{E_4,E_6\}$ between the intersecting
exceptional curves (Figures~\ref{fig:clemens_complex} and~\ref{fig:dynkin-diagram}). The Clemens complex for
$D_3=D_1+D_2$ is the disjoint union of the previous two cases; its maximal
dimensional faces are again $A_1$ and $A_2$. For $D_4$, they are
$A_1,\dots,A_4$, and for $D_5$, they are $A_1,A_2,A_5$ (Figure~\ref{fig:dynkin-diagram}).

For a face $A$ of the Clemens
complexes associated with $D_i$, we set $D_A=\bigcap_{E\in A} E$, and
$\Delta_{i,A}= D_i - \sum_{E \in A} E$.
For a maximal-dimensional face $A$ of a Clemens complex, the adjunction
isomorphism and a metric on the log-canonical bundle $\omega_{\tS}(D_i)$
induce a metric on the bundle
$\omega_{D_A}\otimes \Os_{\tS}(\Delta_{i,A})|_{D_A}$ on $D_A$.  Since $A$ is
maximal, the canonical section $1_{\Delta_{i,A}}$ does not have a pole on
$D_A$, so since $D_A(\RR)$ is compact, the norm
$\norm{1_{\Delta_{i,A}}|_{D_A}}_{\Os_{\tS}(\Delta_{i,A})|_{D_A},\infty}$ is
bounded on $D_A(\RR)$ for any metric. Hence,
\[
  \norm{\omega \otimes 1_{\Delta_{i,A}}|_{\Os_{\tS}}}_{\omega_{D_A} \otimes \Os_{\tS}(\Delta_{i,A})|_{D_A}, \infty}^{-1} \abs{\omega}
  = \norm{1_{\Delta_{i,A}}|_{D_A}}_{\Os_{\tS}(\Delta_{i,A})|_{D_A},\infty} \tau_{D_A, \infty}
\]
(where the equality holds for any choice of metrics on $\omega_{D_A}$ and
$\Os_{\tS}(\Delta_{i,A})|_{D_A}$ compatible with the one on their tensor
product) defines a finite measure on $D_A(\RR)$, independent of the choice of
a form $\omega\in\omega_{D_A}$. We further normalize this measure with a
factor $c_\RR^{\#A}=2^{\#A}$, call it \emph{residue measure}, and denote it by
$\tau_{i,D_A,\infty}$. See~\cite[2.1.12, 4.1]{MR2740045} for details on this
construction.

\begin{lemma}\label{lem:arch-volumes}
  We have
  \[
    \tau_{1,E_7,\infty}(E_7(\RR)) = 8
    \quad \text{and} \quad
    \tau_{i,D_A,\infty}(D_A(\RR)) = 4
  \]
  for $i\in \{2,\dots,5\}$ and every maximal-dimensional face $A$ of the Clemens complex for $D_i$. 
\end{lemma}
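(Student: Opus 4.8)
The plan is to read off $\tau_{i,D_A,\infty}$ from the pushed-forward measures $f_*\tau_{(\tS,D_i),\infty}$ computed in the previous lemma, using the residue-measure recipe recalled just before the statement: in local analytic coordinates near $D_A$ in which the components $E_\alpha$ ($\alpha\in A$) are the coordinate hyperplanes, strip the log-polar factor $\prod_{\alpha\in A}\diff x_\alpha/\abs{x_\alpha}$ from $\tau_{(\tS,D_i),\infty}$, restrict the resulting density to $D_A$, integrate over $D_A(\RR)$, and multiply by $c_\RR^{\#A}=2^{\#A}$. The restricted density is continuous and strictly positive near $D_A$ because the Clemens complexes of all the $D_i$ are disjoint unions of paths (so no component of $D_i$ outside $A$ meets $D_A$) and $D_i$ is reduced (so the poles along the $E_\alpha$ have order exactly $1$).

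For $i=1$ the stratum $D_A=E_7$ is the whole $(-2)$-curve, which the chart $f$ of the preceding lemmas identifies with $\{x=0\}$; there $f_*\tau_{(\tS,D_1),\infty}$ has density $\bigl(\abs{x}\max\{\abs{y},1,\abs{x},\abs{y(y+x)}\}\bigr)^{-1}$. Removing $\diff x/\abs{x}$ and setting $x=0$ leaves $\diff y/\max\{1,y^2\}$ on $E_7(\RR)$, and since $f$ covers $E_7(\RR)\cong\PP^1(\RR)$ up to a single point of measure zero,
\[
  \tau_{1,E_7,\infty}(E_7(\RR))=c_\RR\int_{\RR}\frac{\diff y}{\max\{1,y^2\}}=2\left(\int_{-1}^{1}\diff y+2\int_{1}^{\infty}\frac{\diff y}{y^2}\right)=8.
\]

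For $i\in\{2,\dots,5\}$ and a maximal-dimensional face $A=\{E_\alpha,E_\beta\}$, the stratum $D_A=E_\alpha\cap E_\beta$ is a single rational point, so only the value of the restricted density at $D_A$ is needed. Here I would build, exactly as $f$ was built, a chart near $D_A$ from two degree-zero monomials in the Cox coordinates chosen so that $E_\alpha$ and $E_\beta$ become the hyperplanes $\{u=0\}$ and $\{v=0\}$; in it $f_*\tau_{(\tS,D_i),\infty}$ takes the form $\bigl(\abs{uv}\,\mu(u,v)\bigr)^{-1}\diff u\,\diff v$, where $\mu$ is the maximum of the metrizing monomials of $\omega_{\tS}(D_i)^\vee$ expressed in this chart (for $i=3$ via the quotient representation and the term $M_0$ as in Lemmas~\ref{lem:big-and-nef} and~\ref{lem:heights}). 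Among those monomials exactly one is a unit at $D_A$---the one divisible by neither $\e_\alpha$ nor $\e_\beta$, namely the one attached to $1_{\Delta_{i,A}}$---while every other vanishes there (as does $M_0$ when $i=3$); normalizing the torsor lift of $D_A$ so that this monomial is $1$ gives $\mu(D_A)=1$, whence $\tau_{i,D_A,\infty}(D_A(\RR))=c_\RR^{\#A}=2^2=4$.

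The bulk of the work---and the only delicate part---is the explicit construction of the adapted charts for each of the faces $A_1,\dots,A_5$ occurring in the five Clemens complexes, and the verification via the Dynkin diagram in Figure~\ref{fig:dynkin-diagram} and the coprimality relations~\eqref{eq:surface-gcd} that in each case $1_{\Delta_{i,A}}$ is a unit at $D_A$ while $\Delta_{i,A}$ avoids $D_A$. This is routine linear algebra in $\Pic\tS$ together with bookkeeping on monomials; the main care is needed for $i=3$, where the log-anticanonical density is not a plain monomial maximum.
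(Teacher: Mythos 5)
Your proposal is correct and takes essentially the same route as the paper: the $i=1$ computation is identical, and for the zero-dimensional strata you evaluate the stripped density in torsor-adapted charts exactly as the paper does with its charts $g'$ and $g''$, where the Jacobian of the coordinate change cancels against the Cox-coordinate factor so that the limit equals the reciprocal of the value of the unique metrizing monomial not divisible by $\e_\alpha\e_\beta$, normalized to $1$. The part you leave as ``routine''---checking that the chart density really is $\left(\abs{uv}\,\mu(u,v)\right)^{-1}$ with $\mu(D_A)=1$, including that $M_0\to 0$ at $D_{A_1},D_{A_2}$ in the case $i=3$---is precisely the explicit computation the paper carries out, so no genuinely different ideas are involved.
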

\begin{proof}
  Following~\cite[2.1.12]{MR2740045}, we can compute the unnormalized Tamagawa volume of $E_7$ by integrating
  \[
    \|\diff y \|_{\omega_{E_7},\infty}^{-1}=
    \lim_{x\to 0} \left(|x|  \| (\diff x \wedge \diff y) \otimes 1_{E_7}\|_{\omega_{\tS}(E_7), \infty}^{-1} \right)\text{.}
  \]
  Again evaluating~\eqref{eq:norm-dx-dy-1E7} in the image of $(x,y)$, we get the volume
  \begin{align*}
    \tau^\prime_{1,E_7,\infty}(E_7(\RR))&=\int_\RR \lim_{x\to 0}\frac{|x|}{|x|\max\{
    1,|x|,|y|,|y(y+x)|\}}\diff y
    \\
    &= \int_\RR \frac{1}{\max\{1,|y^2|\}} \diff y =4,
  \end{align*}
  which we normalize by multiplying with $c_\RR=2$.

  For the second and third case, we work in neighborhoods of the two
  intersection points $D_{A_1}=E_3\cap E_4$ and
  $D_{A_2}=E_4 \cap E_6$. The Tamagawa measures on these points are
  simply real numbers.  In order to compute them, we consider the
  charts
  \begin{align*}
    g'  &\colon \AAA^2_\QQ \to \tS,\ (a,b) \mapsto (1:1:a:b:1:1:1:1:-1-b)\ \quad \text{and}\\
    g'' &\colon \AAA^2_\QQ \to \tS,\ (c,d) \mapsto (1:1:1:c:1:d:1:1:-1-cd).
  \end{align*}
  We have $x=1/a=d$, $y=1/ab=1/cd$ for these charts. Since $\|\diff x \wedge \diff y \|= |\det(J_{f\circ g'})|\| \diff a\wedge \diff b\|$, we can use~\eqref{eq:norm-dx-dy-case-2} to compute the norms
  \begin{align*}
    \|(\diff a \wedge \diff b) \otimes 1_{E_3} \otimes 1_{E_4} \otimes 1_{E_6}\|_{\omega_{\tS}(D_2), \infty}
    &= \max\{|a^3b^2|,|ab|,|ab^2|\} \quad \text{and}\\
    \|(\diff c \wedge \diff d) \otimes 1_{E_3} \otimes 1_{E_4} \otimes 1_{E_6}\|_{\omega_{\tS}(D_2), \infty}
    & = \max\{|c^2d|,|cd|,|c^2d^3|\}\text{.}
  \end{align*}
  Analogously to the first case, we now arrive at
  \[
    \tau^\prime_{2,D_{A_1},\infty} = \lim_{(a,b)\to (0,0)} \frac{|ab|}{\max\{|a^3b^2|,|ab|,|ab^2|\}} = 1,
  \]
  and, similarly, $\tau^\prime_{2,D_{A_2},\infty}=1$
  for the unnormalized measures on the points $D_{A_i}(\RR)$, which we multiply with $c_\RR^2=4$.

  In the third case, using the same change of variables and~\eqref{eq:norm-dx-dy-case-3}, we get
  \[
    \norm{(\diff a \wedge \diff b) \otimes 1_{E_3} \otimes 1_{E_4} \otimes 1_{E_6} \otimes 1_{E_7}}_{\omega_{\tS}(D_3), \infty}
    = \abs{ab}\max\{1,\abs{ab}, \abs{b}, M_0(g'(a,b))\}
  \]
  with $M_0(g'(ab))\to 0$, as $(a,b)\to (0,0)$,
  whence $\tau^\prime_{3,D_{A_1},\infty}=1$ for the unnormalized measure.
  Finally,
  \[
    \norm{(\diff c \wedge \diff d) \otimes 1_{E_3} \otimes 1_{E_4} \otimes 1_{E_6} \otimes 1_{E_7}}_{\omega_{\tS}(D_3), \infty}
    = \abs{cd}\max\{1,\abs{cd}, \abs{cd^2}, M_0(g''(c,d))\},
  \]
  where again $M_0(g''(c,d))\to 0$, and we get
  $\tau^\prime_{3,D_{A_2},\infty}=1$ for the unnormalized
  measure. Again, we multiply both measures with $c_\RR^2=4$.
  
  The computations in the cases $i=4,5$ are analogous.
\end{proof}

These Tamagawa numbers are multiplied with rational numbers $\alpha_{i,A}$,
where $A$ is a maximal-dimensional face of the Clemens complex for $D_i$,
depending on the geometry of certain effective cones, as
in~\eqref{eq:alpha}. Following~\cite[\S~2.2]{wilsch-toric},
\begin{equation}\label{eq:tU_iA}
  \tU_{i,A} = X \setminus \bigcup_{\substack{E_j\subset D_i,\\E_j \not\in
      A}}E_j
\end{equation}
is the complement of all boundary components not belonging to $A$, and
$\Eff \tU_{i,A} \subset (\Pic\tU_{i,A})_\RR$ is its
effective cone; all volume functions are normalized as in~\cite[Remark~2.2.9~(iv)]{wilsch-toric}.

\begin{lemma}\label{lem:alpha}
  We have
  \begin{align*}
    \alpha_{1,E_7}
      & =\vol\bigwhere{(t_2,\dots, t_6) \in \RRnn^5}{&t_2+2t_3+2t_4+2t_5+2t_6 \le 1\\
      &t_3+2t_4+4t_5+3t_6 \le 1}=\frac{13}{34560},\\
    \alpha_{2,A_1} &= \vol\where{(t_1,t_2,t_5,t_7) \in \RRnn^4}{t_1+t_2 \le
                     2t_5+t_7,\ 4t_5+2t_7 \le 1}=1/256,\\
    \alpha_{2,A_2}&= \vol\where{(t_1,t_2,t_5,t_7) \in \RRnn^4}{t_1+t_2 \ge 2t_5+t_7,\ 2t_1+2t_2 \le 1 }=1/256,\\
    \alpha_{3,A_1} &= \vol\where{(t_1,t_2,t_5) \in \RRnn^3}{t_1+t_2 \le 2t_5,\ 4t_5 \le 1}=1/96,\\
    \alpha_{3,A_2}&= \vol\where{(t_1,t_2,t_5) \in \RRnn^3}{t_1+t_2 \ge 2t_5,\
                    2t_1+2t_2 \le 1}=1/48,\\
    \alpha_{4,A_1}&= 0,\\
    \alpha_{4,A_2}&= \vol\where{(t_1,t_2) \in \RRnn^2}{t_1+t_2 \le 1/2}=1/8,\\
    \alpha_{4,A_3}&= \vol\where{(t_1,t_2) \in \RRnn^2}{1/2 \le t_1+t_2 \le 2/3}=7/72,\\
    \alpha_{4,A_4}&= \vol\where{(t_1,t_2) \in \RRnn^2}{2/3 \le t_1+t_2 \le 1}=5/18,\\
    \alpha_{5,A_1}&= \vol\where{(t_1,t_5,t_7) \in \RRnn^3}{t_1 \le 2t_5+t_7,\
                    4t_5+2t_7 \le 1}=1/48,\\
    \alpha_{5,A_2}&= \vol\where{(t_1,t_5,t_7) \in \RRnn^3}{t_1 \ge 2t_5+t_7,\
                    2t_1 \le 1}=1/96,\quad \text{and}\\
    \alpha_{5,A_5}&= \vol\where{(t_1,t_5,t_7) \in \RRnn^3}{t_1+2t_5+t_7\le 1,\
                    4t_5+2t_7 \ge 1}=1/24.
  \end{align*}
\end{lemma}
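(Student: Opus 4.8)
The plan is to compute each $\alpha_{i,A}$ directly from the definition~\eqref{eq:alpha}, turning it into the volume of an explicit polytope. First I would record the general mechanism. By~\eqref{eq:tU_iA}, $\Pic\tU_{i,A}$ is the quotient of $\Pic\tS$ by the classes of the boundary components removed, and $\Eff\tU_{i,A}$ is the image of $\Eff\tS$ under this quotient map; since $\tS$ is a weak del Pezzo surface, $\Eff\tS$ is generated by the seven negative curves $E_1,\dots,E_7$, which can be read off from the Cox ring~\eqref{eq:cox} with its grading~\eqref{eq:Pic_degrees} (the remaining Cox generators $\e_8,\e_9$ have classes already lying in this cone). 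Dualizing, $(\Eff\tU_{i,A})^\vee$ is the set of $x\in(\Pic\tS)^\vee_\RR$ with $\langle x,[E_j]\rangle=0$ whenever $E_j\subset D_i$ and $E_j\notin A$, and $\langle x,[E_j]\rangle\ge 0$ for all remaining $j\in\{1,\dots,7\}$; then $\alpha_{i,A}$ is the (normalized) volume of the slice of this cone by $\langle x,-K-D_i\rangle=1$.

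For the bookkeeping I would introduce the linear forms $s_j=\langle x,[E_j]\rangle$ on $(\Pic\tS)^\vee_\RR$. From~\eqref{eq:Pic_degrees} one checks the unique linear relation $s_1+s_2+s_3=2s_5+s_6+s_7$ among them and the expansions $\langle x,\ell_0\rangle=s_1+\dots+s_6$, $\langle x,\ell_1\rangle=s_4+s_5+s_6$, $\langle x,\ell_2\rangle=s_5+s_6$, $\langle x,\ell_3\rangle=s_5$, $\langle x,\ell_4\rangle=s_2$, $\langle x,\ell_5\rangle=s_1$, which let one rewrite any class in terms of the $s_j$. Next I would list the log-anticanonical classes $-K-D_1=2\ell_0-\ell_4-\ell_5$, $-K-D_2=2\ell_0-\ell_1-\ell_2$, $-K-D_3=\ell_0+\ell_3$, $-K-D_4=\ell_0$, $-K-D_5=2\ell_0-\ell_1-\ell_2-\ell_4$. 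For each pair $(i,A)$ this converts the description of $(\Eff\tU_{i,A})^\vee\cap\{\langle x,-K-D_i\rangle=1\}$ into a polytope in the coordinates $(t_j)$ of the statement: for example, for $i=1$ one may take $(s_2,\dots,s_6)$ as coordinates, and the two surviving inequalities $s_1\ge 0$ and $s_7\ge 0$, rewritten through the relation and the normalization, become exactly $t_2+2t_3+2t_4+2t_5+2t_6\le 1$ and $t_3+2t_4+4t_5+3t_6\le 1$; the other twelve cases are entirely analogous. It then remains to compute the volumes, which is elementary in dimension $\le 3$ and which I would carry out with Magma in the five-dimensional case $i=1$. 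The normalization of the volume form is the one fixed in~\cite[Remark~2.2.9~(iv)]{wilsch-toric}; with the lattice coordinates above it reduces to Lebesgue measure in the $t_j$, and one notes these polytopes (without the archimedean factors) are precisely the ones already met in Lemma~\ref{lem:V0}.

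The case $\alpha_{4,A_1}=0$ is the one I would treat separately. Here the removed components are $E_5,E_6,E_7$, so the relation $s_1+s_2+s_3=2s_5+s_6+s_7$ degenerates to $s_1+s_2+s_3=0$; together with $s_1,s_2,s_3\ge 0$ this forces $s_1=s_2=s_3=0$, so $(\Eff\tU_{4,A_1})^\vee$ is a single ray, of dimension strictly below $\rk\Pic\tU_{4,A_1}-1$, and its slice is a point of volume zero. Equivalently, $\Eff\tU_{4,A_1}$ contains a line, hence fails to be salient, which is exactly the obstruction of~\cite[Theorem~2.4.1~(i)]{wilsch-toric} noted in Remark~\ref{rmk:obstruction}.

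I expect the main obstacle to be not any individual volume but the careful identification of $(\Eff\tU_{i,A})^\vee$ together with the correct normalization: one must confirm that no new extremal ray appears in $\Eff\tU_{i,A}$ after passing to the quotient (so that it is genuinely the image of the cone on $E_1,\dots,E_7$), and that the chosen forms $s_j$ restrict to a unimodular basis on the relevant sublattice, so that the normalized volume is the naive Euclidean one in the $t_j$-coordinates. Once this is pinned down, the computations are routine.
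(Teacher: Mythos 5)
Your proposal is correct and follows essentially the same route as the paper: both compute $\alpha_{i,A}$ as the volume of the slice of the dual of $\Eff\tU_{i,A}$ cut out by the log-anticanonical normalization, using that $\Eff\tS$ is generated by $E_1,\dots,E_7$, the single relation $[E_1]+[E_2]+[E_3]=2[E_5]+[E_6]+[E_7]$ from~\eqref{eq:pic_relation}, and unimodular dual coordinates, with the same degenerate treatment of $\alpha_{4,A_1}$ via non-salience of the effective cone. The only difference is bookkeeping: the paper fixes an integral basis $\{[E_j]\}_{j\ne j_0}$ of $\Pic\tS$ and expresses $[E_{j_0}]$ and $\omega_{\tS}(D_i)^\vee$ in it, whereas you work with all seven forms $s_j$ and the $\ell$-basis expansions, which leads to the same polytopes and values.
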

\begin{proof}
  To compute $\alpha_{i,A}$, we choose $j_0 \in \{1,\dots,7\}$ such that
  $E_{j_0} \in A$ and such that the classes of $E_j$ for
  $j \in \{1,\dots,7\} \setminus \{j_0\}$ form a basis of $\Pic \tS$. The
  latter holds for $j_0 \in \{1,2,3,6,7\}$ since the data
  in~\cite{MR3180592} shows that $\Pic \tS$ has rank $6$ and is generated by
  the classes of the negative curves $E_1, \dots, E_7$, where
  \begin{equation}\label{eq:pic_relation}
    E_1+E_2+E_3-2E_5-E_6-E_7
  \end{equation}
  is a principal divisor. An inspection of Figure~\ref{fig:dynkin-diagram}
  shows that $E_{j_0} \in A$ for some $j_0 \in \{3,6,7\}$.

  Hence, there are unique linear combinations $\sum_{j \ne j_0} a_j E_j$ of
  class $\omega_{\tS}(D_i)^\vee$ and $\sum_{j \ne j_0} b_j E_j$ of the same
  class as $E_{j_0}$; the coefficients $a_j,b_j \in \ZZ$ can be computed
  using~\eqref{eq:pic_relation} and the fact that $2E_1+2E_2+3E_3+2E_4+E_6$ has
  anticanonical class by~\eqref{eq:antican_sections}. For the following
  computations, it is useful to know that
  \begin{equation}\label{eq:log_anticanonical_classes}
    2E_4+4E_5+3E_6+2E_7,\quad 2E_1+2E_2+3E_3+2E_4,\quad E_1+E_2+2E_3+2E_4+2E_5+2E_6
  \end{equation}
  have class $\omega_{\tS}(E_{j_0})^\vee$ for $j_0=3,6,7$, respectively
  (expressed without using $E_{j_0}$).
  
  Let $J = \{j \in \{1,\dots,7\} \mid E_j \subset D_i,\ E_j \notin A\}$ and
  $J' = \{1,\dots,7\} \setminus (J \cup \{j_0\})$. By definition~\eqref{eq:tU_iA},
  \begin{equation*}
    \Pic \tU_{i,A} = (\Pic \tS)/\langle E_j \mid j \in J\rangle;
  \end{equation*}
  hence, a basis is given by the classes of $E_j$ for $j \in J'$ modulo the
  classes of $E_j$ for $j \in J$, and its effective cone is generated by the
  classes of $E_j$ for $j \in J' \cup \{j_0\}$ modulo the classes of $E_j$ for
  $j \in J$. Working with the dual basis, we obtain
  \begin{equation*}
    \alpha_{i,A} = \vol\left\{(t_j) \in \RRnn^{J'} \mid \sum_{j \in J'} a_j t_j =1,\
    \sum_{j \in J'} b_j t_j\ge 0\right\}.
  \end{equation*}
  If $A=\{E_{j_0},E_{j_1}\}$ is a $1$-simplex, then $j_1 \in J$, and the next
  step is to eliminate the variable $t_{j_1}$ using using the equation, which
  gives a description of $\alpha_{i,A}$ as the volume of a polytope in
  $\RRnn^{J_i}$ with $J_i$ as in~\eqref{eq:def_Ji} defined by two inequalities.

  In the first case, we have $\tU_{1,E_7}=\tS$ and corresponding effective cone $\Eff\tS$,
  whose dual is the nef cone of $\tS$.
  Working with the dual basis of the classes of $E_1,\dots,E_6$ and using~\eqref{eq:pic_relation}
  and~\eqref{eq:log_anticanonical_classes} for $E_7$
  and $\omega_{\tS}(D_1)^\vee$, we obtain
  \begin{equation*}
    \alpha_1=\vol\bigwhere{(t_1,\dots, t_6) \in \RRnn^6}{&t_1+t_2+t_3-2t_5-t_6 \ge 0\\
     &t_1+t_2+2t_3+2t_4+2t_5+2t_6 = 1}
  \end{equation*}
  and eliminate $t_1$.

  In the second case, there are two constants $\alpha_{2,A_i}$ associated with
  the maximal faces $A_1=\{E_3,E_4\}$ and $A_2=\{E_4,E_6\}$ of the Clemens
  complex.  The subvarieties used in their definition are
  $\tU_{2,A_1}=\tS \setminus E_6$ and $\tU_{2,A_2}=\tS \setminus E_3$. In the first case, we have
  $J=\{6\}$, choose $j_0=3$, and obtain $J'=\{1,2,4,5,7\}$.
  Therefore, the Picard group of $\tU_{A_1}$ is
  $(\Pic\tS)/\langle E_6 \rangle$ with a basis is given by the classes of
  $E_1,E_2,E_4,E_5,E_7$ modulo $E_6$, and its effective cone is generated by
  the classes of $E_1,\dots,E_5,E_7$ modulo $E_6$.
  Since $E_3$ has the same class as $-E_1-E_2+2E_5+E_6+E_7$ in $\Pic \tS$
  by~\eqref{eq:pic_relation}, while $E_4+4E_5+2E_6+2E_7$ has class
  $\omega_{\tS}(D_2)^\vee$ by~\eqref{eq:log_anticanonical_classes}, we obtain
  (working modulo $E_6$)
  \begin{equation*}
    \alpha_{2,A_1} = \vol\bigwhere{(t_1,t_2,t_4,t_5,t_7) \in \RRnn^5}{&-t_1-t_2+2t_5+t_7 \ge 0\\
     &t_4+4t_5+2t_7 = 1}
  \end{equation*}
  and eliminate $t_4$.

  The computation of $\alpha_{2,A_2}$ is similar. Here, we choose $j_0=6$, and
  our basis is given by the classes of $E_1,E_2,E_4,E_5,E_7$ modulo $E_3$.
  The divisor $E_6$ has the same class as $E_1+E_2+E_3-2E_5-E_7$, while
  $2E_1+2E_2+2E_3+E_4$ has class $\omega_{\tS}(D_2)^\vee$. Therefore,
  \begin{equation*}
    \alpha_{2,A_2} = \vol\bigwhere{(t_1,t_2,t_4,t_5,t_7) \in \RRnn^5}{&t_1+t_2-2t_5-t_7 \ge 0\\
     &2t_1+2t_2+t_4 = 1};
  \end{equation*}
  again, we eliminate $t_4$.

  The further cases are analogous. The only exceptional case is the
  computation of $\alpha_{4,A_1}$. Working with $J=\{5,6,7\}$, $j_0=3$ and
  $J'=\{1,2,4\}$, a similar computation as for $\alpha_{2,A_1}$ shows 
  \begin{equation*}
   \alpha_{4,A_1} = \vol\bigwhere{(t_1,t_2,t_4) \in \RRnn^5}{&-t_1-t_2 \ge 0,\\
     &t_4 = 1},
  \end{equation*}
  which clearly has volume $0$ in the hyperplane $t_4=1$.
\end{proof}

\begin{remark}\label{rmk:obstruction}
This last phenomenon $\alpha_{4,A_1}=0$ is an instance of the obstruction described in~\cite[Theorem~2.4.1~(i)]{wilsch-toric}: The regular function
\[
  s = \frac{\e_1 \e_2 \e_3}{\e_5 \e_6 \e_7}  
\]
on $\tUs_4$ is also regular on $\tU_{4,A_1}$. On the one hand, this regular function induces the relation $[E_1]+[E_2] = 0$ in $\Pic(\tU_{i,A_4})$, while both classes on the left are nonzero; this makes the pseudo-effective cone fail to be strictly convex, and the resulting polytope has volume $0$. On the other hand, $s$ vanishes on $D_{4,A_1} = \{\e_3=\e_4=0\}$, so that $\abs{s}<1$ on a sufficiently small real analytic neighborhood $W$ of $D_{4,A_1}$; but $W$ is integral on $\tUs_4(\ZZ)$ and nonzero on $V=\tS \setminus (E_1+\cdots +E_7)$, so $\abs{s}\ge 1$ on the set $V(\QQ) \cap \tUs_4(\ZZ)$ counted by $N_4$. It follows that any sufficiently small analytic neighborhood of $D_{4,A_1}(\RR)$ cannot contribute to $N_4$, which is reflected by the vanishing of the corresponding part of the expected leading constant. 
\end{remark}

\begin{lemma}\label{lem:arch-constants}
  For $i \in \{1,\dots,5\}$, the archimedean contributions to the expected
  constants are
  \begin{equation*}
    c_{i,\infty} = \sum_A \alpha_{i,A} \tau_{i,D_A,\infty}(D_A(\RR)) = C_i,
  \end{equation*}
  where the sum runs through the maximal faces $A$ of the Clemens complex,
  with $C_i$ as in Lemma~\ref{lem:V0}.
\end{lemma}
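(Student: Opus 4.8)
The plan is to combine the two inputs, Lemmas~\ref{lem:arch-volumes} and~\ref{lem:alpha}, after first isolating the structural point that makes the identity transparent: by Lemma~\ref{lem:arch-volumes} the residue volume $\tau_{i,D_A,\infty}(D_A(\RR))$ is independent of the maximal face $A$, equal to $8$ when $i=1$ (the Clemens complex being a single vertex $E_7$) and to $4$ when $i\in\{2,\dots,5\}$. Writing $\tau_i$ for this common value, we obtain $c_{i,\infty}=\tau_i\sum_A\alpha_{i,A}$, and since Lemma~\ref{lem:V0} presents $C_i$ in the form $\tau_i\vol P_i$ for a polytope $P_i\subset\RRnn^{J_i}$, the whole assertion reduces to $\sum_A\alpha_{i,A}=\vol P_i$.

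For $i=1$ there is nothing to add: the only maximal face is $E_7$, and the polytope describing $\alpha_{1,E_7}$ in Lemma~\ref{lem:alpha} is literally $P_1$, so $c_{1,\infty}=8\alpha_{1,E_7}=8\vol P_1=C_1$.

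For $i\in\{2,\dots,5\}$ I would verify that the polytopes $Q_{i,A}\subset\RRnn^{J_i}$ coming out of the proof of Lemma~\ref{lem:alpha} (after elimination of the auxiliary variable $t_{j_1}$, so that $Q_{i,A}$ lives in the same coordinates as $P_i$) tile $P_i$ up to a subset of measure zero, which immediately gives $\sum_A\alpha_{i,A}=\vol P_i$ and hence $c_{i,\infty}=4\vol P_i=C_i$. The verification is by inspecting the defining inequalities: in each case every $Q_{i,A}$ is contained in $P_i$, the $Q_{i,A}$ meet pairwise only in hyperplanes, and their union exhausts $P_i$. For $i=2$, say, the inequalities $t_1+t_2\le2t_5+t_7$ and $4t_5+2t_7\le1$ defining $Q_{2,A_1}$ force $2t_1+2t_2\le1$, and symmetrically $t_1+t_2\ge2t_5+t_7$ together with $2t_1+2t_2\le1$ forces $4t_5+2t_7\le1$; thus $Q_{2,A_1}$ and $Q_{2,A_2}$ are the two halves of $P_2$ cut by $\{t_1+t_2=2t_5+t_7\}$. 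The separating hyperplanes in the other cases are $\{t_1+t_2=2t_5\}$ for $i=3$; the two parallel hyperplanes $\{t_1+t_2=\tfrac12\}$ and $\{t_1+t_2=\tfrac23\}$ for $i=4$ (the degenerate face $A_1$ contributes $\alpha_{4,A_1}=0$ and may be ignored); and $\{t_1=2t_5+t_7\}$ together with $\{4t_5+2t_7=1\}$ for $i=5$. Summing, and cross-checking against the explicit values, one finds $8\cdot\tfrac{13}{34560}=\tfrac{13}{4320}$, $4(\tfrac1{256}+\tfrac1{256})=\tfrac1{32}$, $4(\tfrac1{96}+\tfrac1{48})=\tfrac18$, $4(0+\tfrac18+\tfrac7{72}+\tfrac5{18})=2$, and $4(\tfrac1{48}+\tfrac1{96}+\tfrac1{24})=\tfrac7{24}$, matching $C_1,\dots,C_5$.

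The only genuine work is the tiling verification, elementary but to be carried out case by case; the most delicate instance is $i=5$, where $P_5$ is simultaneously cut by two hyperplanes into three regions $Q_{5,A_1},Q_{5,A_2},Q_{5,A_5}$, and one must confirm that each region coincides with exactly one $Q_{5,A}$ --- in particular that $Q_{5,A_5}$, defined by $4t_5+2t_7\ge1$ and $t_1+2t_5+t_7\le1$, indeed lies in $P_5$, which follows since these two inequalities force $t_1\le\tfrac12$. I anticipate no difficulty beyond this bookkeeping.
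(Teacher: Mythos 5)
Your proposal is correct and is essentially the paper's own argument: the paper likewise combines Lemma~\ref{lem:arch-volumes} with Lemma~\ref{lem:alpha} and observes that, for $i\ge 2$, the polytopes of volumes $\alpha_{i,A}$ fit together (along the hyperplanes you identify) into the polytope defining $C_i$ in Lemma~\ref{lem:V0}, while the case $i=1$ is immediate. Your write-up merely makes the tiling verification and the numerical cross-check explicit, which the paper leaves to the reader.
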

\begin{proof}
  This follows from Lemma~\ref{lem:arch-volumes} and
  Lemma~\ref{lem:alpha}. For $i=2,\dots,6$, we observe that the polytopes of
  volumes $\alpha_{i,A}$ in Lemma~\ref{lem:alpha} fit together to the one
  appearing in the description of $C_i$ in
  Lemma~\ref{lem:V0}.
\end{proof}

We conclude by noting that the classes of $E_3,E_4,E_6,E_7$ in $\Pic\tS$ are
linearly independent; hence, $\rk \Pic\tU_i=\rk\Pic\tS - \#D_i$ (with $\#D_i$
as in Lemma~\ref{lem:param}).
This observation, Lemma~\ref{lem:surface-finite-volumes}, and Lemma~\ref{lem:arch-constants}
allow us to reformulate Theorem~\ref{thm:main_concrete} as
Theorem~\ref{thm:main_abstract} for $i \in \{1, \dots, 5\}$. For the final case, we equip the log-anticanonical bundle $\omega_{\tS}(D_6)^\vee$ with the metric pulled back from $\omega_{\tS}(D_5)^\vee$ along the isomorphism~\ref{eq:symmetry}; since all constructions in this section
are invariant under metric preserving isomorphisms, the theorem follows for $i=6$.

\end{document}